\newcommand{\xupdownarrow}[1]{%
  {\left\Updownarrow\vbox to #1{}\right.\kern-\nulldelimiterspace}
}
\newcounter{sarrow}
\newcommand\xrsquigarrow[1]{%
\stepcounter{sarrow}%
\mathrel{\begin{tikzpicture}[baseline= {( $ (current bounding box.south) + (0,-0.5ex) $ )}]
\node[inner sep=.5ex] (\thesarrow) {$\scriptstyle #1$};
\path[draw,<-,decorate,
  decoration={zigzag,amplitude=0.7pt,segment length=1.2mm,pre=lineto,pre length=4pt}] 
    (\thesarrow.south east) -- (\thesarrow.south west);
\end{tikzpicture}}%
}
\author{Jovana Obradovi\' c}
\thanks{I would like to thank Pierre-Louis Curien and Fran\c cois Lamarche for  useful discussions.}
\address{IRIF, Universit\' e Paris Diderot - Paris 7 \\ Case 7014 
75205 PARIS Cedex 13\\
}
\title {Monoid-like definitions of cyclic operad}
\keywords{operads, cyclic operads, species of structures, monoid, microcosm principle}
\newtheorem{thm}{Theorem}
\newtheorem{lem}{Lemma}
\newtheorem{con}{Convention}
\newtheorem{cor}{Corollary}
\begin{document}

\maketitle
\begin{abstract}
 Guided by the microcosm principle of Baez-Dolan and by the algebraic definitions
of operads of Kelly and Fiore, we introduce two ``monoid-like'' definitions of cyclic
operads, one for the original, ``exchangable-output'' characterisation of Getzler-Kapranov, and the other for the alternative
``entries-only'' characterisation, both within the category of Joyal's species of structures. Relying on a result of Lamarche on descent   for species,
we use these “monoid-like” definitions to prove the equivalence between the ``exchangable-output''
and ``entries-only'' points of view on cyclic operads.
\end{abstract}


\section*{Introduction}\label{sec-Introduction}

A species of structures $S$ associates to each finite set $X$ a set $S(X)$ of combinatorial structures on $X$ that are invariant under renaming the elements of $X$ in a way  consistent with composition of such renamings. The notion, introduced in combinatorics by Joyal in \cite{joyal}, has been set up to provide a
description of discrete structures that is independent from any specific format these structures could be presented in.  For example, $S(X)$ could be the set of graphs whose vertices are given by $X$,  the set of  all permutations of $X$, the set of all subsets of $X$, etc.  Categorically speaking, a species of structures is simply a functor  ${\EuScript C}:{\bf Bij}\rightarrow{\bf Set}$, wherein ${\bf Set}$ is the category of sets and functions, and ${\bf Bij}$ is the category of finite sets and bijections.
Species can be combined in various ways into new species and these ``species algebras"   provide the category of species  with different notions of  ``tensor product''. Some of these products  allow   to redefine operads internally to the category of species,   as monoids. A definition  given in this framework is usually referred to as {\it algebraic}. A definition of an operad as a collection of abstract operations of different arities
that can be suitably  composed will be called  {\em componential}  in this paper. \\
\indent  Kelly \cite{kelly} has given
an algebraic definition of a symmetric operad corresponding to the original
componential definition of May \cite{GILS}.
  This definition is  referred to as the {\em monoidal definition} of operads, since the involved product on species bears a monoidal structure.  
The second definition, which characterises operads with partial composition, has been recently established by Fiore in \cite{fiore}. The {\em pre-Lie product} of Fiore's definition  is not monoidal, but the inferred structure arises by the same kind of principle as the one reflecting a specification of a monoid  in a monoidal category (which is why we call this definition  the  monoid-{\em like} definition of operads). This is a typical example of what has been called the {\em microcosm principle} by Baez and Dolan in \cite{micro}.  The principle tells that 
{\em certain algebraic structures can be defined in any category equipped with a categorified version of the same structure},
and the instance with monoids, presented in Table 1 below, can serve as a guide when seeking the most general way to internalize different algebraic structures. 
 
\begin{center} {\small 
  \begin{tabular}{rcc}  
    \toprule
       &  \textsc{Monoidal category}  ${\bf M}$  & \textsc{Monoid} $M\in {\bf M}$ \\
    \midrule
    {\small{\textsc{product\enspace}  }}     &{\small{$\otimes:{\bf M}\times{\bf M}\rightarrow {\bf M}$}}    & {\small{$\mu: M\otimes M\rightarrow M$}}      \\[0.1cm]
    {\small{\textsc{unit\,\,\,\,\,}}}  &{\small{$1\in {\bf M}$}}    & {\small{$\eta:1\rightarrow M$}}      \\[0.1cm]
{\small\begin{tikzpicture}
\node (A)  at (0,0.5) {};
\node (F)  at (1.55,0.83) {   {\small{\textsc{ associativity}  }}  };
\node (B) at (2,0.5) {};
\node (C) at (2,0) {};
\node (D) at (0,0) {};
\node (E) at (1.2,-0.5) {};
\end{tikzpicture}  }   & {\small\begin{tikzpicture}
\node (A)  at (0,1) {};
\node (F)  at (2,0.83) {   {\small{$\alpha_{x,y,z}:(x\!\otimes\! y)\!\otimes\!z\!\rightarrow\! x\!\otimes(y\!\otimes\!z)$ }}  };
\node (B) at (2,1) {};
\node (C) at (2,0) {};
\node (D) at (0,0) {};
\node (E) at (1.2,-0.5) {};
\end{tikzpicture}  }     &{\small
\begin{tikzpicture}[scale=1.5]
\node (A)  at (0,1) {$(x\otimes x)\otimes x$};
\node (B) at (2.3,1) {$x\otimes (x\otimes x)$};
\node (C) at (2.3,0.2) {$x\otimes x$};
\node (D) at (0,0.2) {$x\otimes x$};
\node (E) at (1.15,-0.25) {$x$};
\path[->,font=\scriptsize]
(A) edge node[above]{$\alpha_{x,x,x}$} (B)
(B) edge node[right]{$\mu$} (C)
(A) edge node[left]{$\mu\otimes {\it id}$} (D)
(D) edge node[below]{$\mu$} (E)
(C) edge node[below]{$\mu$} (E);
\end{tikzpicture}  }   \\[0.1cm]
{\small\begin{tikzpicture}
\node (A)  at (0,0.5) {};
\node (F)  at (1.14,0.53) {  {\small{\textsc{left unit} }}  };
\node (C) at (1.05,-0.05) {{\small{\textsc{right unit} }}  };
\node (D) at (0,0) {};
\node (E) at (1.2,-0.5) {};
\end{tikzpicture}  }
 &{\small\begin{tikzpicture}[scale=1.5]
\node (A)  at (0,0.5) {};
\node (F)  at (1.05,0.2) { {\small{$\lambda_x:1\otimes x\rightarrow x$}}};
\node (C) at (1.05,-0.2) {{\small{$\rho_x:x\otimes 1\rightarrow x$}}  };
\node (D) at (0,0) {};
\node (E) at (1.2,-0.5) {};
\end{tikzpicture}  } & {\small\begin{tikzpicture}[scale=1.5]
\node (A)  at (0.5,1) {$1\otimes x$};
\node (B) at (2,1) {$x\otimes x$};
\node (C) at (3.5,1) {$x\otimes 1$};
\node (D) at (2,0.2) {$x$};
\path[->,font=\scriptsize]
(A) edge node[above]{\scriptsize $\eta\!\otimes\!{\it id}$} (B)
(C) edge node[above]{${\it id}\!\otimes\!\eta$} (B)
(A) edge node[left,yshift=-0.1cm]{$\lambda_x$} (D)
(C) edge node[yshift={-0.13cm},right]{$\rho_x$} (D)
(B) edge node[right]{$\mu$} (D);
\end{tikzpicture}}      \\
    \bottomrule
  \end{tabular}

\vspace{0.25cm}
Table 1. A monoid in a monoidal category}
\end{center}

\indent In this paper we follow the microcosm principle in order to give two algebraic (monoid-{\em like}) definitions of {\em cyclic operads}, introduced by   Getzler-Kapranov \cite{Getzler:1994pn}. The enrichment of the symmetric operad structure determined by the definition of a cyclic operad is provided by adding to the action of permuting the inputs of an operation, an  action of interchanging its output  with one of the inputs, in a way compatible with operadic composition. 
The fact that  operations can now be composed along  inputs that ``used to be outputs" and outputs that ``used to be  inputs"  leads to another point of view on cyclic operads, in  which an operation, instead of having inputs and an (exchangeable) output, now has ``entries", and it can be composed with another operation along any of them.  Such  an {\em entries-only} componental definition is \cite[Definition 48]{modular}.  By contrast, we  refer to  definitions based on  describing cyclic operads as symmetric operads with extra structure as {\em exchangeable-output} ones. One such definition is \cite[Proposition 42]{opsprops}.\\
\indent The algebraic definitions that we deliver correspond to these two approaches for defining  cyclic operads via components. They are moreover given in a {\em non-skeletal} version, which means that the entries/inputs of operations are labeled by arbitrary finite sets, as in  \cite[Definition 48]{modular}, as opposed to  the labeling  by natural numbers   in {\em skeletal} variants, as in \cite[Proposition 42]{opsprops}. Therefore, we first  propose a non-skeletal version of  \cite[Proposition 42]{opsprops}. We additionally give two   proofs of  the equivalence between the  entries-only and exchangeable-output approaches (which, to the author's knowledge, has been taken for granted in the literature), one by comparing the usual  definitions in components (Theorem \ref{2}), and the other one by comparing two algebraic definitions (Theorem \ref{3}). Since algebraic definitions are arguably more conceptual descriptions (that the componential ones) of what a cyclic operad is, we also  point out Theorem \ref{3} as the main result of the paper. Together with the proof of the equivalence between the componential and algebraic definitions of entries-only cyclic operads (Theorem \ref{1}), this makes a sequence of equivalences that also justifies  the algebraic definition of exchangeable-output cyclic operads. An overview of the definitions that we introduce and the correspondences that we make between them is given in Table 2 below.

 \begin{center}
{{\footnotesize \begin{tabular}{rrcccl}  
    \toprule
&    \textsc{{Entries-only}}  &&&&  \textsc{{Exchangeable-output}} \\
    \midrule
     {\textsc{\small{Componential}}} &     \small{ Definition \ref{entriesonly}}&& $\xLeftrightarrow[\enspace \mbox{\small{Theorem \ref{2}}} \enspace]{}$  & & {\small{Definition \ref{exoutput}}} \\[0.1cm]
  	& {\small{Theorem \ref{1}}} $\xupdownarrow{0.5cm}$&& &&   \\[0.1cm]
    {\textsc{\small{Algebraic}}} &   \small{Definition \ref{copeo}}  & & $\xLeftrightarrow{\enspace \mbox{\small{Theorem \ref{3}}}\enspace}$ &&  \small{Definition \ref{dddd}}    \\
    \bottomrule
  \end{tabular}

\vspace{0.25cm}
Table 2. The outline of the paper}}
\end{center}

\indent The plan of the paper is as follows.  Section 1 is a review of the basic elements of the theory of species of structures. In Section 2 we recall   the existing algebraic definitions of  operads and indicate the microcosm principle behind them. Section 3 will be devoted to the introduction of the  algebraic definitions of cyclic operads (Definition \ref{copeo} and Definition \ref{dddd}) and of the componential non-skeletal version of  \cite[Proposition 42]{opsprops} (Definition \ref{exoutput}). Here we also prove Theorem \ref{1} and Theorem \ref{2}. In Section 4, we give the proof of Theorem \ref{3}.

\paragraph{Notation and conventions.} 
This paper is about non-skeletal  cyclic operads with units, introduced  in  {\bf Set}. \\
\indent We shall use two different notions of union. In {\bf Set}, for finite sets $X$ and $Y$, $X+Y$ will denote the coproduct (disjoint union) of $X$ and $Y$ (constructed in the usual way by tagging $X$ and $Y$, by, say, $1$ and $2$) and we shall use the notation $\Sigma_{i\in I} X_i$ (resp. $\Pi_{i\in I}X_i$) for the coproduct (resp. the Cartesian product) of  the family of sets $\{X_i\,|\, i\in I\}$. In order to  avoid making distinct copies of $X$ and $Y$ before taking the union, we take the usual convention of assuming that they are already disjoint.
In {\bf Bij}, we shall denote the {\em ordinary} union of {\em already disjoint} sets $X$ and $Y$ with $X\cup Y$.  \\
\indent If $f_1:X_1\rightarrow Z_1$ and $f_2:X_2\rightarrow Z_2$ are functions  such that $X_1\cap X_2=\emptyset$ and $Z_1\cap Z_2=\emptyset$, $f_1\cup f_2:X_1\cup X_2\rightarrow Z_1\cup  Z_2$ will denote the function defined as $f_1$ on $X_1$ and as $f_2$ on $X_2$. If $Z_1=Z_2=Z$, we shall write $[f_1,f_2]:X_1\cup X_2\rightarrow Z$ for the  function defined in the same way. Accordingly, for the corresponding functions between disjoint unions, we shall  write $f_1+ f_2:X_1+ X_2\rightarrow Z_1+  Z_2$ and $[f_1,f_2]:X_1+ X_2\rightarrow Z$.\\
\indent A decomposition of a finite set $X$ is a family $\{X_i\}_{i\in I}$ of (possibly empty) pairwise disjoint subsets of $X$ such
that their (ordinary) union   is $X$.\\
\indent For a bijection $\sigma:X'\rightarrow X$ and  $Y\subseteq X$, we  denote with $\sigma|^{Y}$ the corestriction of $\sigma$ on $\sigma^{-1}(Y)$. \\
\indent We shall work to a large extent with compositions
of multiple canonical natural isomorphisms between functors. In order for such compositions not to look too
cumbersome, we shall often omit their indices.
 \section{The category of species of structures} 
The content of this section is to a great extent a review and a gathering of material coming from \cite{species}. Certain isomorphisms, whose existence has been claimed in \cite{species}, will be essential for subsequent  sections and we shall construct them explicitely.
\subsection{Definition of species of structures}
The notion of species of structures that we fix as primary corresponds to functors underlying non-skeletal cyclic operads.
\begin{definition}
A  species  (of structures) is a functor $S:{\bf Bij}^{op}\rightarrow {\bf Set}$.
\end{definition}
In the sequel, we shall refer to the functor category ${\bf Set}^{{\bf Bij}^{op}}$ as the {\em category of species} and we shall denote it with ${\bf Spec}$. For an arbitrary finite set $X$, an element $f\in S(X)$ will be referred to as an {\em S-structure}.\\[0.1cm]
\indent Notice that if $S$ is a species and $\sigma:Y\rightarrow X$ a bijection, then $S(\sigma):S(X)\rightarrow S(Y)$ is necessarily a bijection (with the inverse $S(\sigma^{-1})$). 
\begin{con}
For $f\in S(X)$ and a bijection $\sigma:Y\rightarrow X$, we say that $\sigma$ renames the variables of $X$ to (appropriate) variables of $Y$. In particular, if $\sigma:X\backslash\{x\}\cup\{y\}\rightarrow X$ is identity on $X\backslash\{x\}$ and $\sigma(y)=x$, we say that $\sigma$ renames $x$ to $y$, and if $\tau:X\rightarrow X$ is identity on $X\backslash\{x,z\}$ and   $\tau(x)=z$ and $\tau(z)=x$, we say that $\tau$ exchanges $x$ and $z$.
\end{con}
As an example of concrete species we give the following family, since it will be essential for the treatement of operadic units in the subsequent sections. The species $E_n$, where $n\geq 0$, called the {\em cardinality} $n$ {\em species},  is defined by setting
$$E_n(X) = \left\{ 
	\begin{array}{cl}
		\{X\}& \mbox{if }  X \mbox{ has $n$ elements},  \\[0.1cm]
		\emptyset & \mbox{otherwise}.  \\[0.1cm]
	\end{array}
\right.$$
\indent An  isomorphism between species is  simply   a natural isomorphism between functors. If there exists an isomorphism from $S$ to $T$, we say that they $S$ and $T$ are isomorphic and we write $S\simeq T$.
 
 \subsection{Operations on species of structures}
We now  recall   operations on species and their properties. Categorically speaking, every binary operation   is a bifunctor of the form ${\bf Spec}\times {\bf Spec}\rightarrow {\bf Spec}$ and every unary operation is a  functor of the form ${\bf Spec}\rightarrow {\bf Spec}$. Every property of an operation holds up to isomorphism of species. \\[0.1cm]
\indent We start with the analogues of the arithmetic operations of addition and multiplication.
\begin{definition}\label{sumprod} Let $S$ and $T$ be species, $X$ an arbitrary finite set and $\sigma:Y\rightarrow X$ a bijection. The  sum-species of $S$ and $T$ is the species $S+T$ defined  as   $$(S+T)(X)=S(X)+  T(X)  $$ and   $$(S+T)({\sigma})(f)=\left\{ 
	\begin{array}{cl}
		S({\sigma})(f) & \mbox{if }  f \in S(X) \\[0.1cm]
		T({\sigma})(f)& \mbox{if }  f \in T(X)\,. \\[0.1cm]
	\end{array}
\right. $$

The   product-species of $S$ and $T$ is the species $S\cdot T$ defined as  \vspace{-0.1cm}$$(S\cdot T)(X)=\sum_{(X_1,X_2)}S(X_1)\times T(X_2), \vspace{-0.1cm}$$ where the sum is taken over all binary  decompositions $(X_1,X_2)$  of $X$. The action of $S\cdot T$ on  $\sigma$ is given as $$(S\cdot T)(\sigma)(f,g)=(S(\sigma_1)(f),T(\sigma_2)(g)),$$ where $\sigma_i=\sigma|^{X_i}$, $i=1,2.$
\end{definition}

 The isomorphisms from the following lemma  are  constructed straightforwardly.
\begin{lem} The addition and multiplication of species have the following properties.
\begin{itemize}
\item[a)] The operation of addition is associative and commutative.  
\item[b)]  The product of species is associative and commutative. The cardinality $0$
species $E_0$ is  neutral element  for the product of species. Therefore, for all species $S$,  $S\cdot E_0\simeq E_0\cdot S\simeq S$.
\end{itemize}
\end{lem}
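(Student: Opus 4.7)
The plan is to establish each clause as a natural isomorphism of functors ${\bf Bij}^{op}\to{\bf Set}$ by first defining a componentwise bijection at each finite set $X$ and then checking naturality in a bijection $\sigma:Y\to X$. Because every property is stated up to isomorphism of species, it suffices to transport standard set-theoretic bijections (associativity and commutativity of disjoint union, absorption of an empty factor) through the pointwise definitions of $S+T$ and $S\cdot T$ from Definition \ref{sumprod}.

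For part (a), I would observe that at each $X$, both $((S+T)+U)(X)$ and $(S+(T+U))(X)$ are canonically in bijection with the three-fold tagged disjoint union of $S(X)$, $T(X)$, $U(X)$, while $(S+T)(X)$ and $(T+S)(X)$ agree via swapping tags. Naturality is immediate: the action of a sum species on $\sigma$ decomposes piecewise according to the tag, so rebracketing or swapping the summands is plainly compatible with $\sigma$.

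For part (b), I would treat commutativity first. The swap $(f,g)\mapsto(g,f)$ provides, at each $X$, a bijection between the summand over a decomposition $(X_1,X_2)$ in $(S\cdot T)(X)$ and the summand over $(X_2,X_1)$ in $(T\cdot S)(X)$; naturality holds because the corestrictions $\sigma|^{X_1}$ and $\sigma|^{X_2}$ are merely swapped as well. For associativity, both $((S\cdot T)\cdot U)(X)$ and $(S\cdot(T\cdot U))(X)$ are canonically in bijection with the set $\sum_{(X_1,X_2,X_3)}S(X_1)\times T(X_2)\times U(X_3)$ indexed by ternary decompositions of $X$, and naturality reduces to the transitivity of corestriction, namely $(\sigma|^{X_1\cup X_2})|^{X_1}=\sigma|^{X_1}$. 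For the unit law $S\cdot E_0\simeq S$, the only non-empty summand of $\sum_{(X_1,X_2)}S(X_1)\times E_0(X_2)$ is the one with $X_2=\emptyset$ and $X_1=X$, which contributes $S(X)\times\{X\}\cong S(X)$; the symmetric case gives $E_0\cdot S\simeq S$.

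The only real obstacle is bookkeeping: one must index sums over decompositions carefully and verify that the naturality squares commute on the nose rather than merely up to canonical identifications. All conceptual content is inherited from the corresponding elementary isomorphisms in ${\bf Set}$, which is why the author describes the construction as straightforward; the remaining work is just to apply these facts pointwise and confirm compatibility with renaming.
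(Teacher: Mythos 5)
Your proposal is correct and matches the paper's intent exactly: the paper offers no written proof beyond the remark that these isomorphisms "are constructed straightforwardly," and your pointwise constructions (tag-rebracketing for sums, the swap and ternary-decomposition bijections for products, and the single non-empty summand $X_2=\emptyset$ for the unit law) are precisely the standard elaboration, consistent with the explicit formulas the paper later records in Table 3. The only nitpick is that the unit summand contributes $S(X)\times E_0(\emptyset)=S(X)\times\{\emptyset\}$ rather than $S(X)\times\{X\}$, which is immaterial since it is a singleton either way.
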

\begin{con} We extend the notation $f_1+f_2$ and $[f_1,f_2]$ (see the paragraph ``Notations and conventions" in Introduction) from functions to natural transformations. For natural transformations $\psi_i:S_i\rightarrow T_i$, $i=1,2$, $\psi_1+\psi_2:S_1+S_2\rightarrow T_1+T_2$ will denote the natural transformation determined by $(\psi_1+\psi_2)_X={\psi_1}_X+{\psi_2}_X$. For natural transformations $\kappa_i:S_i\rightarrow U$, $i=1,2$,  $[\kappa_1,\kappa_2]:S_1+S_2\rightarrow U$ will denote the natural transformation defined as $[\kappa_1,\kappa_2]_X=[{\kappa_1}_X,{\kappa_2}_X]$. With $i_l$ and $i_r$ we shall denote the insertion natural transformations $i_l:S\rightarrow S+T$ and $i_r:T\rightarrow S+T$, respectively.  \end{con}
\indent Next we recall the operation  corresponding to the operation of  substitution.
\begin{definition}
Let $S$ and $T$ be species, $X$  a finite set, $\sigma:Y\rightarrow X$ a bijection and   $D(X)$  the set of all decompositions of $X$. The substitution product of $S$ and $T$ is the species $S\circ T$ defined as \vspace{-0.1cm}$$(S\circ T)(X)=\sum_{\pi\in D(X)}\bigg(S(\pi)\times\prod_{p\in\pi}T(p)\bigg).\vspace{-0.1cm}$$
For an arbitrary $h=(\pi,f,(g_p)_{p\in \pi})\in (S\circ T)(X)$, the action of $S\circ T$ on $\sigma$ is defined by $$(S\circ T)(\sigma)(h)=(\overline{\pi},\overline{f},(\overline{g}_{\overline{p}})_{\overline{p}\in\overline{\pi}}),$$ where $\overline{\pi}$ is the decomposition of $Y$ induced by $\sigma$, the bijection $\overline{\sigma}:\overline{\pi}\rightarrow \pi$ is induced by $\sigma$, $\overline{f}=S(\overline{\sigma})(f)$,    and finally,  for each $\overline{p}\in\overline{\pi}$,  $\overline{g}_{\overline{p}}=T(\sigma|^{p})(g_p)$.\end{definition}
The basic properties of the substitution product are given in the following lemma.
\begin{lem}\label{associativityofsubstitution}
The substitution product of species is associative   and has  the cardinality 1 species  $E_1$ as  neutral element.  
\end{lem}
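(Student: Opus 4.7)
The plan is to treat the two assertions separately, the unit laws being a straightforward restriction of the defining sum, and associativity being the content of a natural ``two‑level decomposition'' bijection.

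For the unit laws, I would start by unfolding $(S\circ E_1)(X)=\sum_{\pi\in D(X)}\bigl(S(\pi)\times\prod_{p\in\pi}E_1(p)\bigr)$. Since $E_1(p)$ is a singleton when $|p|=1$ and empty otherwise, the product $\prod_{p\in\pi}E_1(p)$ is non‑empty precisely for the discrete decomposition $\pi_X=\{\{x\}\mid x\in X\}$, in which case it is a one‑element set. Hence $(S\circ E_1)(X)\cong S(\pi_X)$, and the canonical bijection $\pi_X\to X$, $\{x\}\mapsto x$, induces via functoriality of $S$ an isomorphism $S(\pi_X)\cong S(X)$. Dually, $(E_1\circ S)(X)=\sum_{\pi\in D(X)}\bigl(E_1(\pi)\times\prod_{p\in\pi}S(p)\bigr)$ collapses to the summand indexed by $\pi=\{X\}$, giving $(E_1\circ S)(X)\cong S(X)$. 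In each case naturality in $\sigma:Y\to X$ is immediate because the surviving summand is the one preserved by relabeling.

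For associativity I want a natural isomorphism $\Phi_X:((S\circ T)\circ U)(X)\to (S\circ (T\circ U))(X)$. Unfolding the definitions,
\[
((S\circ T)\circ U)(X)=\sum_{\pi\in D(X)}\sum_{\rho\in D(\pi)}S(\rho)\times\prod_{r\in\rho}T(r)\times\prod_{p\in\pi}U(p),
\]
\[
(S\circ(T\circ U))(X)=\sum_{\pi'\in D(X)}S(\pi')\times\prod_{p'\in\pi'}\sum_{\pi'_{p'}\in D(p')}T(\pi'_{p'})\times\prod_{q\in\pi'_{p'}}U(q).
\]
The combinatorial key is that the data parametrising both sums is the same ``two‑level decomposition'' of $X$: a pair $(\pi,\rho)$ with $\pi\in D(X)$ and $\rho\in D(\pi)$ corresponds to a pair $(\pi',(\pi'_{p'})_{p'\in\pi'})$ with $\pi'\in D(X)$ and $\pi'_{p'}\in D(p')$ by setting $\pi'=\{\bigcup r\mid r\in\rho\}$ and $\pi'_{\bigcup r}=r$ (viewed, via the canonical bijection $r\to \bigcup r$, as a decomposition of $\bigcup r$). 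The inverse passes from $(\pi',(\pi'_{p'}))$ to $\pi=\bigcup_{p'\in\pi'}\pi'_{p'}$ and $\rho=\{\pi'_{p'}\mid p'\in\pi'\}$. On the $S$-factor this bijection identifies $\rho$ with $\pi'$ up to the same canonical relabeling, so functoriality of $S$ turns $S(\rho)$ into $S(\pi')$; the $T$- and $U$-factors match summand‑wise without further change.

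The main obstacle is not the existence of $\Phi_X$ but its \emph{naturality} in $X$ and the bookkeeping around the two distinct canonical bijections that enter: the one $r\to \bigcup r$ that converts a part of a decomposition of $\pi$ into an actual subset of $X$, and the induced bijection $\overline{\sigma}:\overline{\pi}\to\pi$ produced by a relabeling $\sigma:Y\to X$. I would verify naturality by computing, for $\sigma:Y\to X$, the action on a generic element $(\pi,\rho,f,(g_r)_{r\in\rho},(h_p)_{p\in\pi})$ of the left side: on the one hand one pulls back $\pi$ to $\overline{\pi}$, then $\rho$ to $\overline{\rho}\in D(\overline{\pi})$, and applies $S,T,U$ to the relevant restrictions of $\sigma$; on the other hand, after applying $\Phi_X$ and pulling back through $S\circ(T\circ U)$, one pulls $\pi'$ back to $\overline{\pi'}$ and each $\pi'_{p'}$ back along $\sigma|^{p'}$. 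Both routes produce the same two‑level decomposition of $Y$ equipped with the same transported $S$-, $T$- and $U$-labels, because the construction $\pi\rightsquigarrow\pi'$ and the formation of $\overline{\pi}$ both commute with taking unions of parts. Once this single coherence is checked, the lemma follows.
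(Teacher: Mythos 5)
The paper states this lemma without proof, deferring implicitly to the species literature, so there is no in-text argument to compare against; your proposal is precisely the standard proof (the one in Bergeron--Labelle--Leroux) and is correct in all essentials: the unit laws come from the collapse of the defining sum to the discrete decomposition (resp.\ the one-block decomposition $\{X\}$), and associativity from identifying both triple products with the set of two-level decompositions of $X$. Two small remarks. First, no ``canonical bijection $r\to\bigcup r$'' is needed to view $r$ as a decomposition of $\bigcup r$: a part $r\in\rho$ is already a set of pairwise disjoint subsets of $X$ whose union is $\bigcup r$, which is exactly why the $T$- and $U$-factors match on the nose and only the $S$-factor needs transporting along $\rho\to\pi'$, $r\mapsto\bigcup r$. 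Second, under the paper's convention that decompositions may contain empty parts, that map $r\mapsto\bigcup r$ need not be injective (two distinct parts of $\rho$ can both have empty union), so the identification of $\rho$ with $\pi'$ requires either excluding empty blocks or restricting to species vanishing on $\emptyset$; this is a defect of the paper's stated definition rather than of your argument, but it is the one point where your ``combinatorial key'' bijection would need a caveat.
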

Next comes the analogue of the  operation of derivation.
\begin{definition}\label{derivate}
 The  derivative of $S$ is the species $\partial S$ defined as $$(\partial S)(X)=S(X\cup\{\ast_X\}),$$ where $\ast_X\not\in X$. The action of $\partial S$ on $\sigma$ is defined as $$(\partial S)(\sigma)(f)=S(\sigma^{+})(f),$$ where $\sigma^{+}:Y\cup\{\ast_Y\}\rightarrow X\cup\{\ast_X\}$ is such that $\sigma^{+}(y)=\sigma(y)$ for $y\in Y$ and $\sigma^{+}(\ast_Y)=\ast_X$. We shall refer to $\sigma^{+}$ as the $\partial$-extension of $\sigma$.
\end{definition}
\indent  We now introduce a natural isomorphism that will be used for the algebraic version of the associativity axiom for entries-only cyclic operads. Let $f\in \partial\partial S(X)$ and let \vspace{-0.1cm}$$\varepsilon_X:X\cup\{{\ast_X},{\ast_{X\cup\{\ast_X\}}}\}\rightarrow X\cup\{{\ast_X},{\ast_{X\cup\{\ast_X\}}}\}\vspace{-0.1cm}$$ be the bijection that is identity on $X$ and such that $\varepsilon_X({\ast_X})={\ast_{X\cup\{\ast_X\}}}$ (and $\varepsilon_X(\ast_{X\cup\{\ast_X\}})={\ast_X}$). We define a natural transformation ${\texttt{ex}}_S:\partial(\partial S)\rightarrow\partial(\partial S)$ as \vspace{-0.1cm}$${{{\texttt{ex}}_S}_X}(f)=S(\varepsilon_X)(f).\vspace{-0.1cm}$$ 
We shall refer to ${\texttt{ex}}_S$ as the {\em exchange isomorphism}, since its components exchange the two distinguished elements (arising from the two-fold application of the operation of derivation). \\[0.1cm]
\indent The following lemma exibits isomorphisms between species that correspond to the  rules of {\em the derivative of a sum} and {\em the derivative of a product} of the classical differential calculus.

\begin{lem}\label{leibnitz}
For arbitrary species $S$ and $T$, the following properties hold:
\begin{itemize}
\item[a)] $\partial(S+T)\simeq\partial S+ \partial T$, and  
\item[b)] $\partial(S\cdot T)\simeq(\partial S)\cdot T+S\cdot(\partial T)$. 
\end{itemize}
\end{lem}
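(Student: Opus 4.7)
The plan is to construct the isomorphisms directly from the defining formulas and then verify naturality, which in both parts reduces to carefully tracking what happens to the fresh symbol $\ast_X$.

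For part (a), the proof is essentially a bookkeeping exercise: unfolding definitions gives
\[
\partial(S+T)(X) \;=\; (S+T)(X\cup\{\ast_X\}) \;=\; S(X\cup\{\ast_X\})\,+\,T(X\cup\{\ast_X\}) \;=\; (\partial S+\partial T)(X),
\]
so I would take $\alpha_X$ to be the identity function. Naturality in $\sigma:Y\to X$ holds because both $\partial(S+T)(\sigma)$ and $(\partial S+\partial T)(\sigma)$ act on each summand as $S(\sigma^{+})$ or $T(\sigma^{+})$, by Definitions \ref{sumprod} and \ref{derivate}.

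For part (b), the main idea is that a binary decomposition of $X\cup\{\ast_X\}$ is either of the form $(X_1\cup\{\ast_X\},X_2)$ or of the form $(X_1,X_2\cup\{\ast_X\})$, where $(X_1,X_2)$ is a binary decomposition of $X$. This sorts $\partial(S\cdot T)(X)$ into two classes, giving the two summands of $(\partial S)\cdot T+S\cdot(\partial T)$. Concretely, for $h=((X_1,X_2),f,g)\in \partial(S\cdot T)(X)$ with $\ast_X\in X_1$, let $X_1'=X_1\setminus\{\ast_X\}$ and let $\rho_X:X_1'\cup\{\ast_{X_1'}\}\to X_1$ be the bijection that is the identity on $X_1'$ and sends $\ast_{X_1'}$ to $\ast_X$; then I would define
\[
\beta_X(h)\;=\;i_l\bigl((X_1',X_2),\,S(\rho_X)(f),\,g\bigr),
\]
with the symmetric formula using $i_r$ when $\ast_X\in X_2$. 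The inverse reverses these two cases, so $\beta_X$ is a bijection.

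The only real work is checking naturality of $\beta$, and this is where I would focus attention. Given $\sigma:Y\to X$ and $h$ as above, both routes around the naturality square produce an element indexed by the decomposition $(\overline{X_1'},\overline{X_2})$ of $Y$ induced by $\sigma$, with the $T$-component $T(\sigma|^{X_2})(g)$. The $S$-components are obtained by applying $S$ to the two bijections $\rho_X\circ (\sigma|^{X_1'})^{+}$ and $\sigma^{+}|^{X_1}\circ\rho_Y$, respectively, each mapping $\overline{X_1'}\cup\{\ast_{\overline{X_1'}}\}\to X_1$. A direct check shows that both bijections act as $\sigma|^{X_1'}$ on $\overline{X_1'}$ and send $\ast_{\overline{X_1'}}$ to $\ast_X$, so they agree. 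The expected obstacle is purely notational: keeping straight the four different distinguished elements $\ast_X,\ast_Y,\ast_{X_1'},\ast_{\overline{X_1'}}$ and the fact that $\sigma^{+}$ restricts correctly over $X_1$ because its unique preimage of $\ast_X$ is $\ast_Y$.
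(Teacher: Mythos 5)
Your construction is correct and is essentially the paper's own proof: part (a) is the identity natural transformation, and in part (b) your $\beta_X$, with the renaming $\rho_X$ of $\ast_X$ to $\ast_{X_1'}$ on the $S$-component, is exactly the map $\varphi_X$ defined in the paper (which leaves the naturality verification implicit, whereas you spell it out). Nothing further is needed.
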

\begin{proof} {\em a)} The isomorphism $\Delta:\partial(S+T)\rightarrow\partial S+ \partial T$ is the identity natural transformation.\\[0.1cm]
 {\em b)} We define an isomorphism $\varphi:\partial(S\cdot T)\rightarrow(\partial S)\cdot T+S\cdot(\partial T)$. For a finite set $X$ we have {\small  $$\begin{array}{rcll}
\partial(S\cdot T)(X)&=&\sum_{(X_1,X_2)}\{(f,g)\,|\, f\in S(X_1)\mbox{ and } g\in T(X_2)\},&\\[0.1cm]
(\partial S\cdot T)(X)&=&\sum_{(X'_1,X'_2)}\{(f,g)\,|\, f\in (\partial S)(X'_1)\mbox{ and } g\in T(X'_2)\}, \mbox{ and}&\\[0.1cm] 
(S\cdot\partial T)(X)&=&\sum_{(X'_1,X'_2)}\{(f,g)\,|\, f\in S(X'_1) \mbox{ and } g\in (\partial T)(X'_2)\},&
\end{array}$$} 

\noindent where $(X_1,X_2)$ is an arbitrary decomposition of the set $X\cup\{\ast_X\}$, and   $(X'_1,X'_2)$ is an arbitrary decomposition of the set $X$.\\[0.1cm]
\indent If $(f,g)\in\partial (S\cdot T)(X)$, where $f\in  S(X_1)$ and $g\in T(X_2)$, and if $\ast_X\in X_1$, then $(X'_1,X'_2)=(X_1\backslash\{\ast_X\},X_2)$ is a decomposition of the set $X$ and we set
 $$
\varphi_X(f,g)= 
		(S(\sigma)(f),g), $$ where $\sigma:X_1\backslash\{\ast_X\}\cup\{\ast_{X'_1}\}\rightarrow X_1$ renames ${\ast_X}$ to ${\ast}_{X'_1}$.
 We do analogously if $\ast_X\in X_2$.\\[0.1cm]
\indent To define the inverse of $\varphi_X$, suppose that $(f,g)\in (\partial S\cdot T)(X)$, where $f\in (\partial S)(X'_1)$ and $g\in T(X'_2)$. The pair $(X'_1\cup\{\ast_{X'_1}\},X'_2)$ is then a decomposition of the set $X\cup\{\ast_{X'_1}\}$. Let $\tau:X'_1\cup\{\ast_X\}\rightarrow X'_1\cup\{\ast_{X'_1}\}$ be the renaming of $\ast_{X'_1}$ to $\ast_X$.  The pair $(X_1,X_2)=(X'_1\cup\{\ast_{X}\},X'_2)$ is now a decomposition of the set $X\cup\{\ast_X\}$ and we set  $$\varphi^{-1}_X(f,g)=(S(\tau)(f),g)\in \partial(S\cdot T)(X). $$  We proceed analogously for  $(f,g)\in(S\cdot\partial T)(X)$.
\end{proof}
We shall also need the family of isomorphisms from the following lemma.

\begin{lem}\label{derivativen}
For all $n\geq 1$,  $\partial E_n\simeq E_{n-1}$.
\end{lem}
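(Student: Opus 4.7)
The plan is to unfold the definitions on both sides. By Definition \ref{derivate}, for any finite set $X$ we have $(\partial E_n)(X) = E_n(X \cup \{\ast_X\})$. By the definition of the cardinality-$n$ species, this equals $\{X \cup \{\ast_X\}\}$ when $|X \cup \{\ast_X\}| = n$, i.e.\ when $|X| = n-1$, and $\emptyset$ otherwise. On the other hand, $E_{n-1}(X) = \{X\}$ if $|X| = n-1$, and $\emptyset$ otherwise. Thus on every object $X$ the two sets have the same cardinality (either both empty or both singletons), so there is a unique candidate bijection.

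Concretely, I would define a natural transformation $\alpha : \partial E_n \rightarrow E_{n-1}$ by setting $\alpha_X$ to be the empty function when $|X| \neq n-1$, and $\alpha_X(X \cup \{\ast_X\}) = X$ when $|X| = n-1$. Each $\alpha_X$ is trivially a bijection of sets.

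It remains to check naturality. For a bijection $\sigma : Y \rightarrow X$, the only non-trivial case is $|X| = |Y| = n-1$. On the one hand, $(\partial E_n)(\sigma) = E_n(\sigma^{+})$ sends $X \cup \{\ast_X\}$ to $\sigma^{+\,-1}(X \cup \{\ast_X\}) = Y \cup \{\ast_Y\}$; on the other hand, $E_{n-1}(\sigma)$ sends $X$ to $\sigma^{-1}(X) = Y$. Chasing the unique element of $(\partial E_n)(X)$ around the square
\[
\xymatrix{
(\partial E_n)(X) \ar[r]^-{\alpha_X} \ar[d]_{(\partial E_n)(\sigma)} & E_{n-1}(X) \ar[d]^{E_{n-1}(\sigma)} \\
(\partial E_n)(Y) \ar[r]_-{\alpha_Y} & E_{n-1}(Y)
}
\]
gives $Y$ via both paths, so the square commutes.

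There is essentially no main obstacle here: the statement is a direct unpacking of Definition \ref{derivate} together with the definition of $E_n$. The only mild subtlety is that one must be careful that the two possible fresh basepoints $\ast_X$ and $\ast_Y$ correspond correctly under $\sigma^{+}$, but this is built into the very definition of the $\partial$-extension.
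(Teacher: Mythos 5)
Your proposal is correct and follows essentially the same route as the paper: the paper's isomorphism $\epsilon_n$ is defined by exactly the same formula ${\epsilon_n}_X(X\cup\{\ast_X\})=X$ for $|X|=n-1$ and the empty function otherwise. Your explicit naturality check is a welcome addition the paper leaves implicit.
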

\begin{proof}
For a finite set $X$ we have
$$\begin{array}{ccc}
\partial E_n(X)=\left\{ 
	\begin{array}{cl}
		\{X\cup\{\ast_X\}\}& \mbox{if }  |X|=n-1,  \\[0.1cm]
		\emptyset & \mbox{otherwise},  \\[0.1cm]
	\end{array}
\right. &\mbox{and }&
E_{n-1}(X)=\left\{ 
	\begin{array}{cl}
		\{X\}& \mbox{if }  |X|=n-1,  \\[0.1cm]
		\emptyset & \mbox{otherwise}.  \\[0.1cm]
	\end{array}
\right.
\end{array}$$
The isomorphism $\epsilon_n:\partial E_n\rightarrow E_{n-1}$ is defined as ${\epsilon_n}_X(X\cup\{\ast_X\})=X,$ for $|X|=n-1$. Otherwise,   ${\epsilon_n}_X$ is the empty function.
\end{proof}
Finally, we shall also use the following {\em pointing} operation on species.
\begin{definition}
Let $S$ be a species. The species $S^{\bullet}$, spelled $S$  dot, is defined as follows $$S^{\bullet}(X)=S(X)\times X.$$ For a pair $(f,x)\in S(X)\times X$, the action of $S^{\bullet}$ on a bijection $\sigma:Y\rightarrow X$ is given by  $$S^{\bullet}(\sigma)((f,x))=(S(\sigma)(f),\sigma^{-1}(x)).$$
\end{definition}
\begin{rem}
Observe that the distinguished element of an $S^{\bullet}$-structure belongs to the underlying set $X$, as opposed to the distinguished element of a $\partial S$-struture, which is always outside of $X$.
\end{rem}
To summarise, we list  below the isomorphisms between species that we shall use in the remaining of the paper. 
\begin{center}
  \begin{tabular}{ccc}  
    \toprule
   {\small\textsc{Name}}   &  {\small\textsc{Reference}} &   {\small\textsc{Description}} \\
    \midrule
    {\footnotesize{\textsc{associativity of} $\cdot$}}     &{\footnotesize{$\alpha_{S,T,U}:(S\cdot T)\cdot U\rightarrow S\cdot(T\cdot U)$}} &    {\footnotesize{$((f,g),h)\mapsto (f,(g,h))$}}      \\[0.1cm]
    {\footnotesize{\textsc{commutativity of} $\cdot$}}  &{\footnotesize{${\tt c}_{S,T}:S\cdot T\rightarrow T\cdot S$}} &    {\footnotesize{$(f,g)\mapsto (g,f)$}}      \\[0.1cm]
 {\footnotesize{\textsc{left unitor for} $\cdot$}}&  {\footnotesize{$\lambda_{S}:E_0\cdot S\rightarrow S$}} &    {\footnotesize{$(\{\emptyset\},f)\mapsto f$}}      \\[0.1cm]
 {\footnotesize{\textsc{right unitor for} $\cdot$}}&{\footnotesize{$\rho_{S}:S\cdot E_0\rightarrow S$}} &    {\footnotesize{$(f,\{\emptyset\})\mapsto f$}}      \\[0.1cm]
{\footnotesize{\textsc{exchange}}}  &{\footnotesize{${\tt ex}_S:\partial(\partial S)\rightarrow \partial(\partial S)$}} &    {\footnotesize{$f\mapsto S(\varepsilon)(f)$}}      \\[0.1cm]
{\footnotesize{\textsc{derivative of a sum}}}      &{\footnotesize{$\Delta_{S,T}:\partial(S+T)\rightarrow\partial S+ \partial T$}} &   {\footnotesize{Lemma \ref{leibnitz} (a)}}  \\[0.1cm]
{\footnotesize{\textsc{Leibniz rule}}}      &{\footnotesize{$\varphi_{S,T}:\partial(S\cdot T)\rightarrow(\partial S)\cdot T+S\cdot(\partial T)$}} &   {\footnotesize{Lemma \ref{leibnitz} (b)}}  \\[0.1cm]
{\footnotesize{$\epsilon_n$\textsc{-isomorphism}}}      &{\footnotesize{$\epsilon_n:\partial E_n\rightarrow E_{n-1}$}} &   {\footnotesize{Lemma \ref{derivativen}}}  \\[0.1cm]
    \bottomrule
  \end{tabular}

\vspace{0.25cm}
{\small Table 3. Canonical isomorphisms}
\end{center}

\section{Symmetric operads}
This part is a reminder on several definitions of symmetric operad. Our emphasis is on the use of the
microcosm principle of Baez and Dolan, which we illustrate by reviewing Fiore's
definition  in Section 2.2 below.
\subsection{Kelly-May definition} 
Kelly's monoidal definition \cite[Section 4]{kelly} is the algebraic version of the original definition of an operad, given by May in \cite{GILS}. In the non-skeletal setting, the operadic composition  of May's definition is
given by morphisms 
\begin{equation}\label{simult}\gamma_{X,Y_1,\dots Y_n}:S(X)\times S(Y_1)\times\cdots \times S(Y_n)\rightarrow S(Y_1\cup\cdots \cup Y_n)\,,\end{equation}
defined for non-empty finite set  $X$ and  pairwise disjoint finite sets $Y_1,\dots,Y_n$, where $n=|X|$, and the unit  ${\it id}_x\in S(\{x\})$, defined for all singletons $\{x\}$, which are subject to associativity,
equivariance and unit axioms. Morphisms $\gamma_{X,Y_1,\dots ,Y_n}$ are to be thought of as simultaneous insertions of $n$ operations   into an $n$-ary one, wherefore this kind of composition is referred to as simultaneous. \\
\indent To arrive to Kelly's definition, one first observes that Lemma \ref{associativityofsubstitution} can be reinforced to a stronger claim:  \begin{center}{\em $({\bf Spec},\circ,E_1)$ is a monoidal category.}\end{center}
A monoid in this category is  a triple $(S,\mu,\eta)$, where $S$ is a species and the natural transformations  $\mu:S\circ S\rightarrow S$ and $\eta:E_1\rightarrow S$, called the {\em multiplication} and the {\em unit} of the monoid, respectively,   satisfiy the coherence conditions given by the commutation of the following two diagrams
\begin{center}
\begin{tikzpicture}[scale=1.5]
\node (A)  at (0,1) {\footnotesize $(S\circ S)\circ S$};
\node (B) at (1.9,1) {\footnotesize $S\circ (S\circ S)$};
\node (C) at (3.8,1) {\footnotesize $S\circ S$};
\node (D) at (0,-0) {\footnotesize $S\circ S$};
\node (E) at (3.8,-0) {\footnotesize $S$};
\path[->,font=\scriptsize]
(A) edge node[above]{$\alpha^{\circ}$} (B)
(B) edge node[above]{${\it id}\circ\mu$} (C)
(A) edge node[left]{$\mu\circ {\it id}$} (D)
(D) edge node[below]{$\mu$} (E)
(C) edge node[right]{$\mu$} (E);
\end{tikzpicture}
\enspace\enspace\enspace
\begin{tikzpicture}[scale=1.5]
\node (A)  at (0,1) {\footnotesize $E_1\circ S$};
\node (B) at (1.9,1) {\footnotesize $S\circ S$};
\node (C) at (3.8,1) {\footnotesize $S\circ E_1$};
\node (D) at (1.9,-0) {\footnotesize $S$};
\node (E) at (1.9,-0.211) {};
\path[->,font=\scriptsize]
(A) edge node[above]{$\eta\circ {\it id}$} (B)
(C) edge node[above]{${\it id}\circ\eta$} (B)
(A) edge node[left,yshift=-0.1cm]{$\lambda^{\circ}$} (D)
(C) edge node[right,yshift=-0.1cm]{$\rho^{\circ}$} (D)
(B) edge node[right]{$\mu$} (D);
\end{tikzpicture}
\end{center}
in which $\alpha^{\circ}, \lambda^{\circ}$ and $\rho^{\circ}$ denote the associator, left  and right unitor of $({\bf Spec},\circ,E_1)$, respectively.\\
\indent An element $(f,g_1,\dots,g_n)\in S(X)\times S(Y_1)\times\cdots \times S(Y_n)$ determines the element  $(\pi,f,(g_i)_{1\leq i\leq n})\in(S\circ S)(Y_1\cup\cdots\cup Y_n)$, where $\pi=\{Y_1,\dots, Y_n\}$. 
By defining   \eqref{simult}  as \vspace{-0.1cm}$$\gamma_{X,Y_1,\dots Y_n}(f,g_1,\dots,g_n)=\mu(\pi,f,g_1,\dots,g_n),\vspace{-0.1cm} $$ and  ${{\it id}_x}$ as $\eta_{\{x\}}(\{x\})$, the operadic axioms are easily verified by the naturality of $\mu$ and  laws of the monoid. This gives a crisp alternative to the somewhat cumbersome componential definition:\vspace{-0.1cm}
\begin{center}
{\em A symmetric operad is a monoid in the monoidal category $({\bf Spec},\circ,E_1)$.}\vspace{-0.1cm}
\end{center}

\indent The steps to derive the monoidal definition from above and, more generally, a   monoid-{\em like} definition of an arbitrary operad-like structure, starting from its componential characterisation, can be summarised   as follows. One first has to exhibit a product $\diamond$ on ${\bf Spec}$ that captures the type of operadic composition that is to be formalised (in the same way as the $(S\circ S)$-structure $(\pi,f,(g_i)_{1\leq i\leq n})$ corresponds to the configuration $(f,g_1,\dots,g_n)$ of operadic operations). One then has to examine the properties of this product, primarily by  comparing   species  $(S\diamond T)\diamond U$ and $S\diamond (T\diamond U)$, in order to exhibit an isomorphism whose commutation with the multiplication $\mu:S\diamond S\rightarrow S$  expresses axioms of the operad-like structure in question. Analogously, an appropriate isomorphism of species is needed for each of the remaining axioms of such a structure  (for example, the isomorphims $\lambda_S^{\circ}$ and $\rho_S^{\circ}$ account fot the unit axioms of an operad), except for the equivariance axiom, which holds by the naturality of   $\mu$. The operad-like structure is then introduced as  an object $S$ of ${\bf Spec}$, together with the multiplication $\mu$ (and possibly other natural transformations, like the unit $\eta$ in the previous definition) that commutes in the appropriate way with established isomorphisms.
\subsection{Fiore-Markl definition} In order to recover Fiore's
algebraic definition  \cite[page 8]{fiore}  by following the steps described above, we first recall
the componential definition based on partial composition products, cf. Markl  \cite[Proposition 42]{opsprops}.

 In the definition below, for a species ${\EuScript O}:{\bf Bij}^{op}\rightarrow {\bf Set}$, a bijection $\sigma:Y\rightarrow X$ and an element $f\in {\EuScript O}(X)$, we write $f^{\sigma}$ for ${\EuScript O}(\sigma)(f)$\footnote{This convention is unambiguous in the framework with a fixed species and can be misleading otherwise, as it does not carry the information about the species it refers to. Since it shortens significantly the notation, we will nevertheless   use it in the general setting whenever the relevant species  is clear from the context.}.
\begin{definition}[\textsc{componential}]\label{operadpartial}
A symmetric   operad  is a species ${\EuScript O}:{\bf Bij}^{op}\rightarrow {\bf Set}$, together with a distinguished element ${\it id}_x\in{\EuScript O}(\{x\})$ that exists for each singleton $\{x\}$, and a partial composition operation $$\circ_{x}:{\EuScript O}(X)\times{\EuScript O}(Y)\rightarrow {\EuScript O}(X\backslash\{x\}\cup Y),$$ defined for arbitrary non-empty finite set $X$, an arbitrary set $Y$ and   $x\in X$, such that $X\backslash\{x\}\cap Y=\emptyset$. These data satisfy the axioms given below. \\[0.15cm]
{\em Associativity.} For $f\in {\EuScript O}(X)$, $g\in {\EuScript O}(Y)$ and $h\in {\EuScript O}(Z)$,  the following two equalities hold:\\[0.15cm]
\indent {\em\texttt{[A1]}} $(f \circ_{x} g) \circ_y h =  (f \circ_{y} h) \circ_x g$, where $x,y\in X$,  and\\[0.15cm]
\indent {\em\texttt{[A2]}} $(f \circ_{x} g) \circ_y h = f \circ_{x} (g \circ_y h)$, where $x\in X$ and $y \in Y$.\\[0.15cm]
{\em Equivariance.} For bijections $\sigma_1:X'\rightarrow X$ and $\sigma_2:Y'\rightarrow Y$, and $f\in{\EuScript C}(X)$ and $g\in {\EuScript C}(Y)$, the following equality holds:\\[0.15cm] 
\indent {\em\texttt{[EQ]}} $f^{\sigma_1} \circ_{\sigma_1^{-1}(x)}\,\, g^{\sigma_2}=(f  \circ_x   g)^{\sigma}$,
where    $\sigma=\sigma_1|^{X\backslash\{x\}}\cup \sigma_2$. \\[0.15cm]
{\em Unitality.} For $f\in\EuScript{C}(X)$ and $x\in X$, the following two equalities hold:\\[0.15cm]
\indent {\em\texttt{[U1]}} ${\it id}_{y} \circ_y f =f$, and\\[0.15cm]
\indent {\em\texttt{[U2]}} $f \circ_x  {\it id}_{x}=f$.\\[0.15cm]
Moreover, the unit elements are preserved under the action of ${\EuScript C}(\sigma)$, i.e.\\[0.15cm] 
\indent {\em\texttt{[UP]}} ${id_{x}}^{\sigma}=id_{u}$, for any two singletons $\{x\}$ and $\{u\}$, and a bijection $\sigma:\{u\}\rightarrow\{x\}$.
\end{definition}
This definition is referred to as partial, since the morphisms $\circ_x$ are to be thought of as insertions of one operation into (one input of) another operation.
\begin{rem} By the axioms  { \texttt{[EQ]}} and {\texttt{[UP]}}, it can be easily shown that, for $f\in{\EuScript O}(X)$ and a renaming 
 $\sigma:X\backslash \{x\}\cup\{y\}\rightarrow X$  of  $x$ to $y$, we have $f\circ_{x} {\it id}_{y}=f^{\sigma}.$
\end{rem}
Observe that the data out of which the composition $f\circ_x g$ is obtained consists of the ordered pair $(f,g)$, together with a chosen input $x$ of $f$. This indicates that the product of species that is supposed to capture  partial composition must involve the   product  $\cdot:{\bf Spec}\times{\bf Spec}\rightarrow {\bf Spec}$ introduced in Definition \ref{sumprod}, whereby the structures arising from the left component of the tensor product should have a distinguished element among the elements of the underlying set.  Hence, a priori, there are two possible candidates for the new product: $S^{\bullet}\cdot S$ and $(\partial S)\cdot S$. However, the first one does not work: for $(f,g)\in (S^{\bullet}\cdot S)(X)$, the multiplication $(S^{\bullet}\cdot S)(X)\rightarrow S(X)$ produces an element of $S(X)$, while the composition of $f$ and $g$ along some input $x$ of $f$ is an element of $S(X\backslash\{x\})$.  On the other hand, the elements  of the   set $(\partial S\cdot S)(X)$  are pairs $(f,g)$ such that $f\in S(X_1\cup\{\ast_{X_1}\}) $ and $g\in S(X_2)$, where $(X_1,X_2)$ is a decomposition of the set $X$. From the operadic perspective, the composition of $f\circ_{\ast_{X_1}}g$  belongs to $S(X)$, which agrees with the form of the multiplication $\nu_X:(\partial S\cdot S)(X)\rightarrow S(X)$.  Therefore, as the tentative product of species we take the {\em pre-Lie product}   $S\star T$, defined as $$S\star T=\partial S\cdot T. $$  The next step is to  compare the species $(S\star T)\star U$ and $S\star(T\star U)$. Chasing the associativity fails in this case. However, there is a canonical natural
{\em pre-Lie isomorphism} $$\beta_{S,T,U}:(S\star T)\star U+ S\star(U\star T)\rightarrow S\star(T\star U)+(S\star U)\star T $$ determined by the isomorphisms 
{\small $$ \begin{array}{ll}
\beta_1:(\partial\partial S\cdot T)\cdot U\rightarrow (\partial\partial S\cdot U)\cdot T \enspace\enspace \enspace& \beta_1={\alpha}^{-1}\circ({\texttt{ex}}\cdot {\texttt{c}})\circ\alpha,\\[0.1cm]
\beta_2:(\partial S\cdot \partial T)\cdot U\rightarrow \partial S\cdot(\partial T\cdot U)\enspace\enspace \enspace & \beta_2=\alpha,\\[0.1cm]
\beta_3:\partial S\cdot(\partial U\cdot T)\rightarrow (\partial S\cdot \partial U)\cdot T\enspace\enspace \enspace & \beta_3={\alpha}^{-1}, 
\end{array}$$} \\[-0.3cm]
where $\alpha$, ${\texttt{c}}$ and ${\texttt{ex}}$ stand  for appropriate instances of  isomorphisms given in Table 2. The pre-Lie isomorphism is the ``smallest" isomorphism that captures both associativity axioms for operads ($\beta_1$ accounts for \texttt{[A1]} and $\beta_2$ for \texttt{[A2]}).\\[0.1cm]
\indent For  operadic units we shall need the isomorphisms exhibited in the following lemma.
\begin{lem}
For an arbitrary species $S$,  $E_1\star S\simeq S$ and $S\star E_1\simeq S$.
\end{lem}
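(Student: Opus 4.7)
My plan is to unpack the definition $S \star T = \partial S \cdot T$ and build each isomorphism from canonical data already provided by Table~3 and Lemma~\ref{derivativen}.

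For $E_1 \star S \simeq S$: since $E_1 \star S = \partial E_1 \cdot S$, I would take the composite of $\epsilon_1 \cdot {\it id}_S \colon \partial E_1 \cdot S \to E_0 \cdot S$ with the left unitor $\lambda_S \colon E_0 \cdot S \to S$. As a composite of natural isomorphisms, this is itself a natural isomorphism. Concretely, the only nonempty summand of $(\partial E_1 \cdot S)(X)$ arises from the decomposition $(\emptyset, X)$ of $X$, and the composite sends $(\{\ast_{\emptyset}\}, f) \mapsto f$.

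For $S \star E_1 \simeq S$: here $S \star E_1 = \partial S \cdot E_1$, and the plan is analogous in spirit, combining $\epsilon_1$ with the appropriate unitor, now with $E_1$ in the second factor. Componentwise, $(\partial S \cdot E_1)(X)$ decomposes over $x \in X$ into summands of the form $S(X \setminus \{x\} \cup \{\ast_{X \setminus \{x\}}\}) \times \{\{x\}\}$; on each such summand, the natural candidate for the bijection to $S(X)$ is obtained by acting by $S$ on the $\partial$-extension of the bijection renaming $\ast_{X \setminus \{x\}}$ to $x$. I would then assemble these pointwise bijections into a single natural transformation of species.

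The main obstacle lies in the right-unit case, where I would need to verify naturality with respect to an arbitrary bijection $\sigma \colon Y \to X$: this requires careful tracking of how the decompositions of $X$ pull back along $\sigma$ and how the $\partial$-extensions interact with the renamings used in the pointwise definition. The left-unit case is a routine composition of established natural isomorphisms, so the right-unit case carries essentially all of the technical content of the lemma.
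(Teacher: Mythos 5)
Your treatment of $E_1\star S$ coincides exactly with the paper's: the isomorphism is $\lambda_S^{\star}=\lambda_S\circ(\epsilon_1\cdot {\it id}_S)$, obtained from $\partial E_1\simeq E_0$ and the left unitor for $\cdot$. That half is fine, and it is a routine composite, as you say.

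The right-unit half has a genuine gap, and it is not the naturality bookkeeping you flag. The per-summand bijections you describe do exist: for each $x\in X$, the summand $\partial S(X\setminus\{x\})\times E_1(\{x\})$ of $(\partial S\cdot E_1)(X)$ is in bijection with $S(X)$ by renaming $\ast_{X\setminus\{x\}}$ to $x$ (note that no $\partial$-extension is needed here; one applies $S$ to the renaming directly). But assembling them yields a map $\sum_{x\in X}S(X)\rightarrow S(X)$, which is $|X|$-to-one and hence not injective once $|X|\geq 2$; no amount of care with how decompositions pull back along $\sigma$ will repair this. What your pointwise bijections actually assemble into is an isomorphism $S\star E_1=\partial S\cdot E_1\simeq S^{\bullet}$ (compare Remark \ref{partial_pointed} and the $\blacktriangle$-analogue, Lemma \ref{2unit}), not an isomorphism onto $S$: already for $S=E_2$ one has $|(E_2\star E_1)(\{a,b\})|=2$ while $|E_2(\{a,b\})|=1$. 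The asymmetry is intrinsic to $\star$, since $\partial$ acts only on the left factor, so the two cases are not ``analogous'' in the way the paper's one-line proof suggests; what is actually available (and all that Definition \ref{fiore} uses in \texttt{[OA2]}) is the non-invertible morphism $\rho_S^{\star}:S\star E_1\rightarrow S$ given by the isomorphism onto $S^{\bullet}$ followed by the projection onto the first component. Your proposal, by trying to make the construction explicit, runs exactly into this obstruction rather than into the technical verification you anticipated.
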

\begin{proof} We get the  isomorphism $\lambda_S^{\star}:E_1\star S\rightarrow S$ via
$E_1\star S=\partial E_1\cdot S\simeq E_0\cdot S\simeq S,$ i.e. as $\lambda_S^{\star}=\lambda_S^{\cdot}\circ({\epsilon_1}\cdot {\it id}_S)$. The isomorphism $\rho_S^{\star}:S\star E_1\rightarrow S$ is defined analogously.
\end{proof}
By the microcosm principle, these data induce  the following definition.
\begin{definition}[\textsc{algebraic}]\label{fiore}
An  operad  is a triple $(S,\nu,\eta_1)$ of a species $S$, a morphism  $\nu:S\star S\rightarrow S$, called the {\em multiplication}, and a morphism $\eta_1:E_1\rightarrow S$, called the {\em unit}, such that\\[0.15cm]
\indent {\em\texttt{[OA1]}} $\nu_2\circ\beta=\nu_1$, where $\nu_1$ and $\nu_2$ are induced by $\nu$ as follows:\\[0.15cm]
\indent\phantom{aaaaka}- $\nu_1:(S\star S)\star S+S\star(S\star S)\rightarrow S$ is determined by \vspace{-0.15cm}
$${\scriptsize \begin{array}{llllll}
&&&&&\nu_{11}:(\partial\partial S\cdot S)\cdot S\xrightarrow{\enspace   i_l\cdot{\it id} \enspace}(\partial\partial S\cdot S+\partial S\cdot\partial S)\cdot \partial S\xrightarrow{\enspace\varphi^{-1}\cdot{\it id}\enspace}
\partial(\partial S\cdot S)\cdot S\xrightarrow{\enspace\partial\nu\cdot{\it id}\enspace}\partial S\cdot S\xrightarrow{\enspace\nu\enspace}S,\\
&&&&&\nu_{12}:(\partial S\cdot \partial S)\cdot S\xrightarrow{\enspace   i_r\cdot{\it id} \enspace}(\partial\partial S\cdot S+\partial S\cdot\partial S)\cdot \partial S\xrightarrow{\enspace\varphi^{-1}\cdot{\it id}\enspace}
\partial(\partial S\cdot S)\cdot S\xrightarrow{\enspace\partial\nu\cdot{\it id}\enspace}\partial S\cdot S\xrightarrow{\enspace\nu\enspace}S,\\
&&&&&\nu_{13}:\partial S\cdot(\partial S\cdot S)\xrightarrow{\enspace {\it id}\cdot\nu \enspace}\partial S\cdot S\xrightarrow{\enspace \nu \enspace}S, \mbox{ and}
\end{array}}$$
\indent\phantom{iiiaaia}- $\nu_2:S\star(S\star S)+(S\star S)\star S\rightarrow S$ is determined by $\nu_{21}=\nu_{11}$, $ \nu_{22}=\nu_{13}$ and\linebreak \indent\phantom{iiiiaaia} $\nu_{23}=\nu_{12}$,  and\\[0.05cm]
\indent {\em\texttt{[OA2]}} $\eta_1$ satisfies  coherence conditions given by the commutation of the  diagram
\begin{center}
\begin{tikzpicture}[scale=1.5]
\node (A)  at (0.2,1) {\footnotesize $E_1\star S$};
\node (B) at (2,1) {\footnotesize $S\star S$};
\node (C) at (3.8,1) {\footnotesize $S\star E_1$};
\node (D) at (2,-0.2) {\footnotesize $S$};
\node (E) at (-0.3,0) {};
\path[->,font=\scriptsize]
(A) edge node[above]{$\eta_1\star {\it id}_S$} (B)
(C) edge node[above]{${\it id}_S\star\eta_1$} (B)
(A) edge node[left,yshift=-0.1cm]{$\lambda_S^{\star}$} (D)
(C) edge node[right,yshift=-0.1cm]{$\rho_S^{\star}$} (D)
(B) edge node[right]{$\nu$} (D);
\end{tikzpicture} 
\end{center}
\end{definition}
Indeed, it can be shown that \texttt{[OA1]} acounts for  \texttt{[A1]} and \texttt{[A2]}\footnote{Actually, the equalities $\nu_{21}\circ\beta=\nu_{11}$ and $\nu_{22}\circ\beta=\nu_{12}$ are enough to prove associativity.}, the naturality of $\nu$ ensures \texttt{[EQ]}, \texttt{[OA2]} proves \texttt{[U1]} and \texttt{[U2]}, are  the naturality of $\eta$ ensures \texttt{[UP]}.
\begin{con}\label{non-unitary}
Henceforth, we shall restrict ourselves to {\em constant-free} operads, i.e. operads for which the underlying species $\EuScript O$ is such that ${\EuScript O}(\emptyset)=\emptyset$. This assumption is necessary for establishing the equivalence  between the algebraic definitions of cyclic operads, as will become clear in Section 4 (Remark 4.4). 
\end{con}
\section{Cyclic operads}  
This section contains the algebraic treatment of cyclic operads. 
\subsection{Entries-only definition of cyclic operads}
Starting from the  entries-only componential definition of cyclic operads,  we follow the steps anticipated by the microcosm principle and present its algebraic counterpart. 
\subsubsection{Componential definition}
The condition ${\EuScript O}(\emptyset)=\emptyset$ that we imposed in Convention \ref{non-unitary} intuitively means that all operations of an operad have at least one input, which, together with the output, makes at least two ``entries" from the point of view of cyclic operads. We revise below \cite[Definition 48]{modular}, by restricting to the class of {\em constant-free} cyclic operads and adding units.
 \begin{definition}[\textsc{entries-only, componential}]\label{entriesonly}
A   constant-free cyclic operad is a species  ${\EuScript C}:{\bf Bij}^{op}\rightarrow {\bf Set}$, such that ${\EuScript C}(\emptyset)={\EuScript C}(\{x\})=\emptyset$ for all singletons $\{x\}$, together with a distinguished element ${\it id}_{x,y}\in {\EuScript C}(\{x,y\})$ for each two-element set $\{x,y\}$, and a partial composition operation 
$${{_{x}\circ_{y}}}:{\EuScript C}(X)\times {\EuScript C}(Y)\rightarrow {\EuScript C}(X\backslash\{x\}\cup Y\backslash\{y\}) ,$$
defined for arbitrary non-empty finite sets $X$ and $Y$ and elements $x\in X$ and $y\in Y$, such that $X\backslash\{x\}\cup Y\backslash\{y\}=\emptyset$.
These data must satisfy the  axioms given below.  \\[0.15cm]
{\em Parallel associativity.} For $f\in {\EuScript C}(X)$, $g\in {\EuScript C}(Y)$ and $h\in {\EuScript C}(Z)$,  the following   equality holds:\\[0.15cm]
\indent {\em\texttt{(A1)}} $(f\, {_{x}\circ_{y}}\,\, g)\,\,{_{u}\circ_z}\, h =  (f {_{u}\circ_{z}}\,\, h)\,\,{_{x}\circ_{y}}\, g$, where $x,u\in X$, $y\in Y,$ $z\in Z$.\\[0.15cm]
{\em Equivariance.} For bijections $\sigma_1:X'\rightarrow X$ and $\sigma_2:Y'\rightarrow Y$, and $f\in{\EuScript C}(X)$ and $g\in {\EuScript C}(Y)$, the following equality holds:\\[0.15cm] 
\indent {\em\texttt{(EQ)}} $f^{\sigma_1}\,\,{_{{ {\sigma_1^{-1}}(x)}}\circ_{\sigma_2^{-1}(y)}}\,\, g^{\sigma_2}=(f {_x\circ_y} \,\, g)^{\sigma}$,
where    $\sigma=\sigma_1|^{X\backslash\{x\}}\cup \sigma_2|^{Y\backslash\{y\}}$. \\[0.15cm]
{\em Unitality.} For $f\in\EuScript{C}(X)$ and $x\in X$, the following  equality holds:\\[0.15cm]
\indent {\em\texttt{(U1)}} ${\it id}_{x,y}\,\,{_y\circ_x}\,\, f=f$.\\[0.15cm]
 Moreover, the unit elements are preserved under the action of ${\EuScript C}(\sigma)$, i.e.\\[0.15cm] 
\indent {\em\texttt{(UP)}} ${id_{x,y}}^{\sigma}=id_{u,v}$, for any bijection $\sigma:\{u,v\}\rightarrow\{x,y\}$.
\end{definition}
\indent The lemma below gives  basic properties of the partial composition operation.
\begin{lem}\label{j} The partial composition operation ${{_{x}\circ_{y}}}$ satisfies the laws listed below.\\[0.15cm]
{\em Commutativity.} For  $f\in{\EuScript C}(X)$, $g\in {\EuScript C}(Y)$, $x\in X$ and $y\in Y$, the following equality holds:\\[0.15cm]
\indent {\em\texttt{(CO)}} $f\, {_x\circ_y} \,\, g=g\, {_y\circ_x} \,\, f$.\\[0.15cm]
{\em Sequential associativity.} For $f\in {\EuScript C}(X)$, $g\in {\EuScript C}(Y)$ and $h\in {\EuScript C}(Z)$,  the following   equality holds:\\[0.15cm]
\indent {\em\texttt{(A2)}} $(f\, {_{x}\circ_{y}}\,\, g)\,\,{_{u}\circ_z}\, h =  f {_{x}\circ_{z}}\,\, (g\,\,{_{u}\circ_{z}}\, h)$, where $x\in X$, $y,u\in Y,$ $z\in Z$.\\[0.15cm]
{\em Unitality.}  For $f\in\EuScript{C}(X)$ and $x\in X$, the following  equality holds: \\[0.15cm]
\indent {\em\texttt{(U2)}} $f \,\,{_x\circ_y}\,\, {\it id}_{y,x}=f$.\\[0.15cm]
{\em Equivariance.}
Let $f\in{\EuScript C}(X), g\in{\EuScript C}(Y)$, $x\in X$, $y\in Y$, and let $\sigma :Z\rightarrow X\backslash\{x\}\cup Y\backslash\{y\}$ be an arbitrary bijection. If $\tau_1:X'\rightarrow X,\tau_2:Y'\rightarrow Y$ and $\tau:Z\rightarrow X'\backslash\{\tau_1^{-1}(x)\}\cup Y'\backslash\{\tau_2^{-1}(y)\}$ are bijections such that $\sigma=(\tau_1|^{X\backslash\{x\}}\cup \tau_2|^{Y\backslash\{y\}})\circ \tau$,
then \\[0.15cm]
\indent {\em\texttt{(EQ)'}}   $(f {_x\circ_y} \,\, g)^{\sigma}=(f^{\tau_1} {_x\circ_y} \,\, g^{\tau_2})^{\tau}$.
\end{lem}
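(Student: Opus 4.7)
The plan is to establish the four laws in the order (CO), (A2), (U2), (EQ)', since the last three reduce to routine consequences of (CO) together with the axioms of Definition \ref{entriesonly}, whereas (CO) itself requires the only genuinely new idea.

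The key observation for (CO) is that identity elements can be ``inserted'' to turn a single composition $f\,\, {_x\circ_y}\,\, g$ into the outer step of a configuration that falls in the scope of parallel associativity. Concretely, given $f\in{\EuScript C}(X)$, $g\in{\EuScript C}(Y)$, $x\in X$, $y\in Y$, I would pick fresh labels $a,b\notin X\cup Y$ and first prove the sub-claim that
\[
{\it id}_{a,b}\,\, {_a\circ_x}\,\, f = f^{\sigma},
\]
where $\sigma:X\backslash\{x\}\cup\{b\}\to X$ sends $b\mapsto x$ and is the identity elsewhere; symmetrically, ${\it id}_{a,b}\,\, {_b\circ_y}\,\, g = g^{\tau}$ with $\tau$ renaming $a$ to $y$. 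To see the sub-claim, take the instance ${\it id}_{x,c}\,\, {_c\circ_x}\,\, f=f$ of (U1) (with $c$ fresh), apply (EQ) with $\sigma_1:\{a,b\}\to\{x,c\}$ given by $\sigma_1(b)=x$, $\sigma_1(a)=c$, and $\sigma_2={\it id}_X$, and identify ${\it id}_{x,c}^{\sigma_1}$ with ${\it id}_{a,b}$ via (UP); unwinding the bijection induced on the target side of (EQ) recovers $\sigma$. Now apply (A1) to ${\it id}_{a,b}$ paired with $f$ at $(a,x)$ and with $g$ at $(b,y)$:
\[
({\it id}_{a,b}\,\, {_a\circ_x}\,\, f)\,\, {_b\circ_y}\,\, g = ({\it id}_{a,b}\,\, {_b\circ_y}\,\, g)\,\, {_a\circ_x}\,\, f.
\]
Substituting the sub-claim into both sides and using (EQ) once more to transfer $\sigma$ and $\tau$ through the outer composition (the induced bijection on $X\backslash\{x\}\cup Y\backslash\{y\}$ turns out to be the identity), the left-hand side simplifies to $f\,\, {_x\circ_y}\,\, g$ and the right-hand side to $g\,\, {_y\circ_x}\,\, f$, which is (CO).

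The remaining parts are straightforward. For (A2), rewrite $f\,\, {_x\circ_y}\,\, g$ as $g\,\, {_y\circ_x}\,\, f$ by (CO); since the composition entries $y$ and $u$ now both lie in $Y$, axiom (A1) applies and yields $(g\,\, {_y\circ_x}\,\, f)\,\, {_u\circ_z}\,\, h=(g\,\, {_u\circ_z}\,\, h)\,\, {_y\circ_x}\,\, f$; one further use of (CO) produces the desired $f\,\, {_x\circ_y}\,\, (g\,\, {_u\circ_z}\,\, h)$. For (U2), $f\,\, {_x\circ_y}\,\, {\it id}_{y,x}={\it id}_{y,x}\,\, {_y\circ_x}\,\, f=f$, the first step by (CO) and the second by (U1). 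For (EQ)', apply (EQ) with $\sigma_i=\tau_i$ to get $f^{\tau_1}\,\, {_{\tau_1^{-1}(x)}\circ_{\tau_2^{-1}(y)}}\,\, g^{\tau_2}=(f\,\, {_x\circ_y}\,\, g)^{\sigma_0}$, where $\sigma_0=\tau_1|^{X\backslash\{x\}}\cup\tau_2|^{Y\backslash\{y\}}$, then apply ${\EuScript C}(\tau)$ to both sides and invoke functoriality with the hypothesis $\sigma=\sigma_0\circ\tau$.

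The main obstacle is (CO): the trick of using a fresh identity to ``test'' the two orderings of a partial composition, and the careful choice of renamings so that the bijections induced by the two appeals to (EQ) collapse to the identity, are the non-obvious ingredients. Everything else is a direct consequence of (CO) and the axioms of Definition \ref{entriesonly}.
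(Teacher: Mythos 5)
Your proposal is correct and follows essentially the same route as the paper: \texttt{(CO)} is obtained by inserting a unit via \texttt{(U1)} and then invoking parallel associativity \texttt{(A1)}, after which \texttt{(A2)}, \texttt{(U2)} and \texttt{(EQ)'} fall out of \texttt{(CO)} together with \texttt{(A1)}, \texttt{(U1)} and \texttt{(EQ)} exactly as in the paper. The only difference is cosmetic: the paper uses ${\it id}_{x,y}$ with $x\in X$, $y\in Y$ directly, writing $f\,{_x\circ_y}\,g=({\it id}_{x,y}\,{_y\circ_x}\,f)\,{_x\circ_y}\,g=({\it id}_{x,y}\,{_x\circ_y}\,g)\,{_y\circ_x}\,f=g\,{_y\circ_x}\,f$, whereas you introduce fresh labels $a,b$ and a renaming sub-claim, which is a slightly longer but somewhat more careful version of the same argument.
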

\begin{proof} The commutativity law \texttt{(CO)} holds thanks to \texttt{(U1)} and \texttt{(A1)}, as follows: \vspace{-0.1cm}$$f\, {_{x}\circ_{y}}\,\, g=({\it id}_{x,y}\,\,{_y\circ_x}\,\, f)\, {_{x}\circ_{y}}\,\, g=({\it id}_{x,y}\,\,{_x\circ_y}\,\, g)\, {_{y}\circ_{x}}\,\, f=g\, {_{y}\circ_{x}}\,\, f\,.\vspace{-0.1cm}$$ We then use \texttt{(CO)} together with   \texttt{(A1)} to derive  the sequential associativity law  \texttt{(A2)}\footnote{Since parallel and sequential associativity are both essential for the notion of cyclic operad, for cyclic operads without units the commutativity law has to be specified as an axiom (in order to make \texttt{(A2)} derivable).}: \vspace{-0.1cm}$$(f\, {_{x}\circ_{y}}\,\, g)\,\,{_{u}\circ_z}\, h =(g\, {_{y}\circ_{x}}\,\, f)\,\,{_{u}\circ_z}\, h= (g\, {_{u}\circ_{z}}\,\, h)\,\,{_{y}\circ_x}\, f=f \,\,{_{x}\circ_y}\,  (g\, {_{u}\circ_{z}}\,\, h)\,.\vspace{-0.1cm}$$
The unit law \texttt{(U2)} follows from \texttt{(U1)} by applying \texttt{(CO)} on its left side, and the variation \texttt{(EQ')} of the equivariance axiom follows easily by \texttt{(EQ)}.
\end{proof}
Related to the  implication \texttt{(A1)}$+$\texttt{(CO)}$\Rightarrow$\texttt{(A2)}  proved above, it is also true (and easily checked) that \texttt{(A2)}$+$\texttt{(CO)}$\Rightarrow$\texttt{(A1)}. Therefore, \texttt{(A2)}, \texttt{(CO)}, \texttt{(EQ)}, \texttt{(U1)}, and \texttt{(UP)}  provide an equivalent (but not minimal) axiomatisation for ${_{x}\circ_{y}}$.
\subsubsection{Algebraic definition}
Applying the microcosm principle relative to Definition \ref{entriesonly}  begins with the observation that the data out of which $f {_x\circ_y} \,\, g$ is obtained consists of the pair $(f,g)$ and chosen inputs $x$ and $y$ of $f$ and $g$, respectively. The discussion we had on page 10 for operads makes it easy to guess  what combination of the product and derivative of species is the ``right'' one in this case. 
\begin{definition}
Let $S$ and $T$ be species. The   triangle product  (or, shorter, the $\blacktriangle$-product) of $S$ and $T$ is the species $S\blacktriangle T$ defined as  $$S\blacktriangle T=\partial S\cdot\partial T. $$
Therefore, for a finite set $X$,
 $$(S\blacktriangle T)(X)=  \sum_{(X_1,X_2)}\{(f,g)\,|\,f\in S(X_1\cup\{\ast_{X_1}\}),g\in T(X_2\cup\{\ast_{X_2}\})\},  $$
and, for $(f,g)\in (S\blacktriangle T)(X)$ and a bijection $\sigma:Y\rightarrow X$, 
 $$(S\blacktriangle T)(\sigma)(f,g)=(S(\sigma_1^{+})(f),T(\sigma_2^{+})(g)), $$
where  $\sigma_1=\sigma|^{X_1}$, $\sigma_2=\sigma|^{X_2}$, and $\sigma_i^{+}$ are the $\partial$-extensions of $\sigma_i$,  $i=1,2$.

\end{definition}
\begin{rem} The  isomorphism  {\em\texttt c}$_{\partial S,\partial T}:\partial S\cdot\partial T\rightarrow \partial T\cdot\partial S$ witnesses that the $\blacktriangle$-product is commutative.
\end{rem}
\indent  The next step is  to exhibit an isomorphism that equates various ways to derive a $\blacktriangle$-product of three species. Intuitively, in the language of species, the associativity axiom \texttt{(A1)}  can be stated   as the existence of an isomorphism of the form  $(\partial\partial S\cdot\partial T)\cdot \partial U\rightarrow (\partial\partial S\cdot\partial U)\cdot \partial T.$
It turns out that the ``minimal" isomorphism  that compares $(S\blacktriangle T)\blacktriangle U$ and $S\blacktriangle (T\blacktriangle U)$ and that includes the above isomorphism is  $$\gamma_{S,T,U}:(S\blacktriangle T)\blacktriangle U+T\blacktriangle(S\blacktriangle U)+(T\blacktriangle U)\blacktriangle S\rightarrow S\blacktriangle (T\blacktriangle U)+(S\blacktriangle U)\blacktriangle T+U\blacktriangle(S\blacktriangle T), $$ whose explict description is as follows. By unfolding the definition of $\blacktriangle$, we see that $\gamma_{S,T,U}$  connects a sum of $6$ species on the left with a sum of $6$ species on 
the right.
Here is the list of the $6$   constituents of $\gamma$, together with their explicit definitions:
 $${\small\begin{array}{ll}
\gamma_1:(\partial\partial S\cdot\partial T)\cdot \partial U\rightarrow (\partial\partial S\cdot\partial U)\cdot \partial T\enspace \enspace  \enspace  & \gamma_1={\alpha}^{-1}\circ({\tt ex}\cdot {\tt c})\circ\alpha,\\[0.1cm]
\gamma_2:(\partial S\cdot\partial\partial T)\cdot\partial U\rightarrow\partial S\cdot(\partial\partial T\cdot\partial U)\enspace \enspace  \enspace & \gamma_2={\tt c}\circ\gamma_1\circ({\tt c}\cdot{\it id}),\\[0.1cm]
\gamma_3:\partial T\cdot(\partial\partial S\cdot\partial U)\rightarrow\partial U\cdot(\partial\partial S\cdot\partial T)\enspace  \enspace  \enspace & \gamma_3={\tt c}\circ\gamma_1\circ {\tt c},\\[0.1cm]
\gamma_4:\partial T\cdot(\partial S\cdot\partial\partial U)\rightarrow\partial S\cdot(\partial T\cdot\partial\partial U)\enspace \enspace  \enspace  & \gamma_4=({\it id}\cdot {\tt c})\circ{\tt c}\circ\gamma_1\circ({\tt c}\cdot{\it id})\circ  {\tt c},\\[0.1cm]
\gamma_5:(\partial\partial T\cdot\partial U)\cdot\partial S\rightarrow \partial U\cdot(\partial S\cdot\partial\partial T)\enspace \enspace   \enspace & \gamma_5=({\it id}\cdot {\tt c})\circ {\tt c}\circ\gamma_1, \mbox{ and}\\[0.1cm]
\gamma_6:(\partial T\cdot\partial\partial U)\cdot\partial S\rightarrow(\partial S\cdot\partial\partial U)\cdot\partial T\enspace \enspace  \enspace & \gamma_6={\tt c}\circ({\it id}\cdot {\tt c})\circ{\tt c}\circ\gamma_1\circ({\tt c}\cdot{\it id}).\end{array}}$$
Notice that, having  fixed $\gamma_1$, the pairing given by $\gamma_3$ is also predetermined, but there are other ways to pair the remaining $4$ summands from the left with the $4$ summands from the right.  We made this choice in order for all $\gamma_i$ to represent ``parallel associativity modulo commutativity", but a different pairing could have been chosen as well. \\[0.1cm]
\indent What remains is to exhibit the structure on species that will account for operadic units. The following lemma is essential.

\begin{lem}\label{2unit}
For an arbitrary species $S$,   $E_2\blacktriangle S\simeq S^{\bullet}$ and $S\blacktriangle E_2\simeq S^{\bullet}$.
\end{lem}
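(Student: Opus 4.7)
The plan is to unfold the definitions and notice that the $E_2$-factor kills almost every summand, leaving a singleton indexed by the choice of a distinguished element.

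By definition, $(E_2\blacktriangle S)(X)=\sum_{(X_1,X_2)} \partial E_2(X_1)\times \partial S(X_2)$. Since $\partial E_2(X_1)=E_2(X_1\cup\{\ast_{X_1}\})$, this is empty unless $|X_1|=1$, in which case it is the singleton $\{X_1\cup\{\ast_{X_1}\}\}$. So only decompositions of the form $(\{x\},X\setminus\{x\})$ with $x\in X$ contribute, and the corresponding summand is canonically the set $\partial S(X\setminus\{x\})=S((X\setminus\{x\})\cup\{\ast_{X\setminus\{x\}}\})$. Meanwhile $S^{\bullet}(X)=S(X)\times X=\sum_{x\in X}S(X)$.

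I would then define $\psi_X\colon (E_2\blacktriangle S)(X)\to S^{\bullet}(X)$ by sending an element in the summand indexed by $x\in X$, namely a triple $((\{x\},X\setminus\{x\}),\{x,\ast_{\{x\}}\},g)$ with $g\in S((X\setminus\{x\})\cup\{\ast_{X\setminus\{x\}}\})$, to the pair $(S(\tau_x)(g),x)$, where $\tau_x\colon X\to (X\setminus\{x\})\cup\{\ast_{X\setminus\{x\}}\}$ is the bijection that is the identity on $X\setminus\{x\}$ and sends $x$ to $\ast_{X\setminus\{x\}}$. The inverse $\psi_X^{-1}$ sends $(h,x)\in S(X)\times X$ to the triple in the summand $X_1=\{x\}$ whose $S$-component is $S(\tau_x^{-1})(h)$; inverseness is immediate from functoriality of $S$.

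For naturality with respect to a bijection $\sigma\colon Y\to X$, I would write $y=\sigma^{-1}(x)$ and chase the two composites around the naturality square. The left-hand composite $\psi_Y\circ(E_2\blacktriangle S)(\sigma)$ sends the triple above to $(S(\sigma_2^+\circ\tau_y)(g),y)$, where $\sigma_2=\sigma|^{X\setminus\{x\}}$ and $\sigma_2^+$ is its $\partial$-extension; the right-hand composite $S^{\bullet}(\sigma)\circ\psi_X$ yields $(S(\tau_x\circ\sigma)(g),y)$. Thus the square commutes as soon as $\sigma_2^+\circ\tau_y=\tau_x\circ\sigma$, which holds by inspection: both bijections carry $z\in Y\setminus\{y\}$ to $\sigma(z)\in X\setminus\{x\}$, and carry $y$ to $\ast_{X\setminus\{x\}}$.

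For the second isomorphism $S\blacktriangle E_2\simeq S^{\bullet}$, I would not repeat the calculation but simply compose $\psi$ with the commutativity isomorphism ${\tt c}_{\partial S,\partial E_2}\colon\partial S\cdot\partial E_2\to\partial E_2\cdot\partial S$ from the Remark, obtaining $\psi\circ{\tt c}\colon S\blacktriangle E_2\to E_2\blacktriangle S\to S^{\bullet}$. The only real work is the naturality check above, and the main (modest) pitfall there is careful bookkeeping of the two kinds of distinguished elements $\ast_X$ and $\ast_{X\setminus\{x\}}$ produced by $\partial$ applied to different sets; this is what the equation $\sigma_2^+\circ\tau_y=\tau_x\circ\sigma$ expresses.
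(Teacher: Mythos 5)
Your proof is correct and follows essentially the same route as the paper's: the paper also observes that $\partial E_2(X_1)$ forces $X_1$ to be a singleton, defines $\lambda^{\blacktriangle}_S$ on the summand indexed by $x$ via exactly the renaming bijection you call $\tau_x$, and obtains the second isomorphism as $\lambda^{\blacktriangle}_S\circ{\tt c}$. The only difference is that you carry out the naturality verification (the identity $\sigma_2^{+}\circ\tau_y=\tau_x\circ\sigma$) explicitly, which the paper leaves implicit; your check is accurate.
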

\begin{proof} 
By the definition of the product $\blacktriangle$ and of the species $E_2$, we have
\begin{equation}\label{e2}(E_2\blacktriangle S)(X)=\sum_{x\in X}\{(\{x,\ast_{\{x\}}\},f)\,|\, , f\in S(X\backslash\{x\}\cup\{\ast_{X\backslash\{x\}}\})\}.\end{equation}

\indent We define $\lambda_{S}^{\blacktriangle}:E_2\blacktriangle S\rightarrow S^{\bullet}$ as \vspace{-0.1cm}$$
{\lambda_S^{\blacktriangle}}_X:(\{x,\ast_{\{x\}}\},f)\mapsto (S(\sigma)(f),x)\vspace{-0.1cm},$$ where $\sigma:X\rightarrow X\backslash\{x\}\cup\{\ast_{X\backslash\{x\}}\}$ renames  $\ast_{X\backslash\{x\}}$ to $x$.  For $X=\emptyset$, ${\lambda_S^{\blacktriangle}}_X$ is the empty function.\\[0.1cm]
\indent The isomorphism $\kappa_S^{\blacktriangle}:S\blacktriangle E_2\rightarrow S^{\bullet}$ is defined as $\kappa_S^{\blacktriangle}=\lambda_S^{\blacktriangle}\circ {\tt c}$.
\end{proof}
\begin{rem}\label{partial_pointed}
Since $\partial E_2\simeq E_{1}$, we also have that $E_1\cdot \partial S\simeq S^{\bullet}$.  The isomorphism $\delta_{S}:S^{\bullet}\rightarrow E_1\cdot \partial S$ is defined as $(\epsilon_2^{-1}\cdot{\it id}_{\partial S})^{-1}\circ (\lambda_S^{\blacktriangle})^{-1}$, i.e. for $f\in S(X)$ and $x\in X$, we have \vspace{-0.1cm}$${\delta_S}_{X}(f,x)=(\{x\},S(\sigma)(f)),\vspace{-0.1cm}$$ where $\sigma: X\backslash\{x\}\cup\{\ast_{X\backslash\{x\}}\}\rightarrow X$ renames $x$ to $\ast_{X\backslash\{x\}}$.  
\end{rem}
These data are  assembled by the microcosm principle as follows.
\begin{definition}[\textsc{entries-only, algebraic}]\label{copeo}
A constant-free cyclic operad  is a triple $(S,\rho,\eta_2)$ of a species $S$, such that $S(\emptyset)=S(\{x\})=\emptyset$ for all singletons $\{x\}$, a morphism $\rho: S\blacktriangle S\rightarrow S$, called the {\em multiplication}, and a morphism $\eta_{2}:E_2\rightarrow S$, called the {\em unit}, such that\\[0.15cm]
\indent {\em\texttt{(CA1)}} $\rho_2\circ \gamma=\rho_1$, where $\rho_1$ and $\rho_2$ are induced from $\rho$ as follows:\\[0.2cm]
\indent\phantom{aaaaka}- $\rho_1:(S\blacktriangle S)\blacktriangle S+S\blacktriangle(S\blacktriangle S)+(S\blacktriangle S)\blacktriangle S\rightarrow S$ is determined by \vspace{-0.1cm}
$${\scriptsize \begin{array}{llllll} &&&&&\rho_{11}:(\partial\partial S\cdot\partial S)\cdot \partial S\xrightarrow{i\cdot{\it id}}(\partial\partial S\cdot\partial S+\partial S\cdot\partial\partial S)\cdot\partial S\xrightarrow{\varphi^{-1}\cdot{\it id}}\partial(\partial S\cdot\partial S)\cdot\partial S\xrightarrow{\partial\rho\cdot{\it id}}\partial S\cdot\partial S\xrightarrow{\rho}S\\ &&&&&\rho_{12}:(\partial S\cdot\partial\partial S)\cdot\partial S\xrightarrow{i\cdot{\it id}}(\partial\partial S\cdot\partial S+\partial S\cdot\partial\partial S)\cdot\partial S\xrightarrow{\varphi^{-1}\cdot{\it id}}\partial(\partial S\cdot\partial S)\cdot\partial S\xrightarrow{\partial\rho\cdot{\it id}}\partial S\cdot\partial S\xrightarrow{\rho}S\\ &&&&&\rho_{13}:\partial S\cdot(\partial\partial S\cdot\partial S)\xrightarrow{{\it id}\cdot i}\partial S\cdot (\partial\partial S\cdot\partial S+\partial S\cdot\partial\partial S)\xrightarrow{{\it id}\cdot \varphi^{-1}}\partial S\cdot\partial(\partial S\cdot\partial S)\xrightarrow{{\it id}\cdot\partial\rho}\partial S\cdot\partial S\xrightarrow{\rho}S\\ &&&&&\rho_{14}:\partial S\cdot(\partial S\cdot\partial\partial S)\xrightarrow{{\it id}\cdot i}\partial S\cdot (\partial\partial S\cdot\partial S+\partial S\cdot\partial\partial S)\xrightarrow{{\it id}\cdot \varphi^{-1}}\partial S\cdot\partial(\partial S\cdot\partial S)\xrightarrow{{\it id}\cdot\partial\rho}\partial S\cdot\partial S\xrightarrow{\rho}S\\[0.06cm]
&&&&&\rho_{15}=\rho_{11}\mbox{ \enspace\enspace and \enspace\enspace}\rho_{16}=\rho_{12}\end{array}}$$
\indent\phantom{aaaaka}- $\rho_2:S\blacktriangle (S\blacktriangle S)+(S\blacktriangle S)\blacktriangle S+S\blacktriangle(S\blacktriangle S)\rightarrow S$ is determined by $\rho_{21}=\rho_{11}$,\\
\indent\phantom{aafaaka} $\rho_{22}=\rho_{23}=\rho_{13}$, $\rho_{24}=\rho_{25}=\rho_{14}$ and $\rho_{26}=\rho_{12}$, and\\[0.15cm]
\indent {\em\texttt{(CA2)}} $\eta_2$ satisfies the coherence condition given by the commutation of the  diagram 
\begin{center}
\begin{tikzpicture}[scale=1.5]
\node (A)  at (0,1) {\footnotesize $E_2\blacktriangle S$};
\node (B) at (1.8,1) {\footnotesize $S\blacktriangle S$};
\node (E) at (0,0) {\footnotesize $S^{\bullet}$};
\node (D) at (1.8,0) {\footnotesize $S$};
\path[->,font=\scriptsize]
(A) edge node[above]{$\eta_2\blacktriangle{\it id}_S$} (B)
(A) edge node[left]{$\lambda_S^{\blacktriangle}$} (E)
(B) edge node[right]{$\rho$} (D)
(E) edge node[below]{${\pi_1}_{S}$} (D);
\end{tikzpicture}
\vspace{-0.2cm}
\end{center}
where $\lambda_S^{\blacktriangle}$ and $\kappa_S^{\blacktriangle}$ are the isomorphisms from Lemma \ref{2unit}, and ${{\pi_1}_S}_X$ is the first projection.
\end{definition}
\begin{con}\label{non-unitary-cyclic}
In the remaining of the paper we shall work only with constant-free cyclic operads. To make things shorter, we shall refer to them simply as cyclic operads. 
\end{con}
 In the following lemma we prove the equality that represents the algebraic analogue of the commutativity law \texttt{(CO)}, which would follow anyhow after we prove the equivalence between Definition 3.2 and Definition 3.8. Nevertheless, we do this directly since it shortens significantly the proof of that equivalence.
\begin{lem}\label{rhoc}
For an arbitrary algebraic entries-only cyclic operad $(S,\rho,\eta_2)$, $\rho\circ{\tt c}=\rho$.
\end{lem}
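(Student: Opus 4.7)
The plan is to transcribe into the algebraic framework the componential derivation of commutativity from parallel associativity and unitality carried out in the proof of Lemma \ref{j}, namely the chain $f \,{_x\circ_y}\, g = ({\it id}_{x,y} \,{_y\circ_x}\, f) \,{_x\circ_y}\, g = ({\it id}_{x,y} \,{_x\circ_y}\, g) \,{_y\circ_x}\, f = g \,{_y\circ_x}\, f$. Each of the three equalities will be implemented by one invocation of an axiom of Definition \ref{copeo}: (CA2), then (CA1), then (CA2) again.

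First, I insert a unit. Given $(f, g) \in (S \blacktriangle S)(X)$ with $f \in S(X_1 \cup \{\ast_{X_1}\})$ and $g \in S(X_2 \cup \{\ast_{X_2}\})$, I construct a canonical lift $\xi \in ((E_2 \blacktriangle S) \blacktriangle S)(X)$ consisting of the unique element of $\partial\partial E_2(\emptyset)$ together with $f$ and $g$; by the Leibniz rule (Lemma \ref{leibnitz}), $\xi$ lies in the summand $(\partial\partial E_2 \cdot \partial S) \cdot \partial S$ of $\partial(\partial E_2 \cdot \partial S) \cdot \partial S$. The morphism $(\eta_2 \blacktriangle {\it id}_S) \blacktriangle {\it id}_S$ sends $\xi$ to an element $\tilde\xi$ of the corresponding summand $(\partial\partial S \cdot \partial S) \cdot \partial S$ of $(S \blacktriangle S) \blacktriangle S$, and the unit axiom (CA2), applied through $\lambda_S^{\blacktriangle}$, ensures that the component $\rho_{11}$ of $\rho_1$ evaluated at $\tilde\xi$ reduces, after absorption of the $\eta_2$-factor by $\rho$, to $\rho_X(f, g)$.

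Next, I invoke axiom (CA1) in the form $\rho_1 = \rho_2 \circ \gamma$. The component $\gamma_1 = \alpha^{-1} \circ ({\tt ex}_S \cdot {\tt c}_{\partial S, \partial S}) \circ \alpha$ restricts to an endomorphism of the $(\partial\partial S \cdot \partial S) \cdot \partial S$ summand in which $\tilde\xi$ lives. By naturality of ${\tt ex}$ and the fact that ${\tt ex}_{E_2}$ is the identity on $\partial\partial E_2(\emptyset)$ (since $E_2$ assigns a singleton to every two-element set, making the swap of $\ast$-elements induce the identity on the unique element), ${\tt ex}_S$ acts trivially on the image of $\partial\partial\eta_2$. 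Consequently $\gamma_1(\tilde\xi) = \tilde\xi'$, where $\tilde\xi'$ is obtained from $\tilde\xi$ by swapping $f$ and $g$, and (CA1) gives $\rho_{11}(\tilde\xi) = \rho_{2j}(\tilde\xi')$ for the matched index $j$. A second application of (CA2), now absorbing the $\eta_2$-factor of $\tilde\xi'$ on the $\rho_{2j}$-side, identifies $\rho_{2j}(\tilde\xi')$ with $\rho_X(g, f) = (\rho \circ {\tt c})_X(f, g)$. Chaining the three identities yields $\rho = \rho \circ {\tt c}$.

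The main obstacle is the bookkeeping: pinpointing the matched pair of summand indices $(1, j)$ on the two sides of $\gamma$ such that $\rho_{11}(\tilde\xi)$ and $\rho_{2j}(\tilde\xi')$ reduce via (CA2) to $\rho(f, g)$ and $\rho(g, f)$ respectively, and verifying the precise action of the Leibniz isomorphism $\varphi^{-1}$ in the composites defining these two components. Once the summand indices are correctly paired and ${\tt ex}_S$ is shown to collapse on the $\eta_2$-component via the naturality argument above, the three steps assemble into a straightforward diagram chase.
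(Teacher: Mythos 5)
Your proposal is correct and follows essentially the same route as the paper's proof: the paper's Diagram 1 is precisely your three-step chain, with the left and right trapezoids implementing the two invocations of \texttt{(CA2)}, the bottom square implementing $\rho_{21}\circ\gamma_1=\rho_{11}$ from \texttt{(CA1)}, and the top squares encoding your observation that $\gamma_1$ at the $E_2$-level reduces to $\mathtt{c}$ because $\mathtt{ex}$ acts trivially on the singleton $\partial\partial E_2(\emptyset)$. The only detail you leave open, the matched summand index $j$, is $j=1$ (so that $\rho_{2j}=\rho_{21}=\rho_{11}$), exactly as in the paper.
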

\begin{proof}
In Diagram 1 below,  $D_l$ and $D_r$ commute  by {\texttt{(CA2)}},  $D_t$ and $D_m$ commute by the naturality of $\gamma_1$, and $D_b$ commutes as it represents the equality $\rho_{21}\circ\gamma_1=\rho_{11}$.\\
\indent For a finite set $X$, let $(f,g)\in(\partial S\cdot\partial S)(X)$, where $f\in\partial S(X_1)$, $y\in\partial S(X_2)$ and $(X_1,X_2)$ is an arbitrary decomposition of $X$. Starting with $(f,g)$, we chase Diagram 1 from the top left $\partial S\cdot\partial S$ to the bottom left $\partial S\cdot\partial S$, by going through the left ``'border'' of the diagram, i.e. by applying the composition  \begin{equation}\label{thecomp}(\partial\pi_1\circ\partial\lambda^{\blacktriangle}\circ\varphi^{-1}\circ{\it i_l}\circ(\partial\epsilon_2^{-1}\cdot{\it id})\circ{{\lambda^{\star}}^{-1}})\cdot{\it id}.\end{equation}
\vspace{-0.85cm}
\begin{center}
\begin{tikzpicture}[scale=1.5]
\node (J)  at (0.5,3) {\scriptsize $(\partial\partial E_2\cdot\partial S)\cdot \partial S$};
\node (K)  at (3.5,3) {\scriptsize $(\partial\partial E_2\cdot\partial S)\cdot \partial S$};
\node (A)  at (0.5,2) {\scriptsize $(\partial\partial S\cdot\partial S)\cdot \partial S$};
\node (D) at (3.5,2) {\scriptsize $(\partial\partial S\cdot\partial S)\cdot \partial S$};
\node (B)  at (-2.5,3) {\scriptsize $\partial(\partial E_2\cdot\partial S)\cdot\partial S$};
\node (B')  at (6.5,3) {\scriptsize $\partial(\partial E_2\cdot\partial S)\cdot\partial S$};
\node (C)  at (-2.5,1) {\scriptsize $\partial(S^{\bullet})\cdot\partial S$};
\node (C')  at (6.5,1) {\scriptsize $\partial(S^{\bullet})\cdot\partial S$};
\node (G)  at (0.5,1) {\scriptsize $\partial S\cdot\partial S$};
\node (G')  at (3.5,1) {\scriptsize $\partial S\cdot\partial S$};
\node (S)  at (2,1) {\scriptsize $S$};
\node (Sa)  at (0.5,4) {\scriptsize $\partial S\cdot\partial S$};
\node (Sb)  at (3.5,4) {\scriptsize $\partial S\cdot\partial S$};
\node (L)  at (-1.3,2) {\small $D_l$};
\node (R)  at (5.3,2) {\small $D_r$};
\node (T)  at (2,2.5) {\small $D_m$};
\node (St)  at (2,3.5) {\small $D_t$};
\node (M)  at (2,1.5) {\small $D_b$};
\path[->,font=\scriptsize]
(J) edge node[above]{$\gamma_1$} (K)
(Sa) edge node[above]{${\tt c}$} (Sb)
(Sa) edge node[left]{\tiny $((\partial\epsilon_2^{-1}\cdot{\it id})\circ{\lambda^{\star}}^{-1})\cdot\!{\it id}$} (J)
(Sb) edge node[right]{\tiny $((\partial\epsilon_2^{-1}\cdot{\it id})\circ{\lambda^{\star}}^{-1})\cdot\!{\it id}$} (K)
(J) edge node[above]{\tiny $(\varphi^{-1}\!\circ i_l)\!\cdot\!{\it id}$} (B)
(K) edge node[above]{\tiny $(\varphi^{{\small{-1}}}\!\circ i_l)\!\cdot\!{\it id}$} (B')
(J) edge node[left]{$(\partial\partial\eta_2\cdot{\it id})\cdot{\it id}$} (A)
(B) edge node[left]{$\partial\lambda^{\blacktriangle}\cdot{\it id}$} (C)
(B') edge node[right]{$\partial\lambda^{\blacktriangle}\cdot{\it id}$} (C')
(C) edge node[below]{$\partial\pi_1\cdot{\it id}$} (G)
(C') edge node[below]{$\partial\pi_1\cdot{\it id}$} (G')
(K) edge node[right]{$(\partial\partial\eta_2\cdot{\it id})\cdot{\it id}$} (D)
(A) edge node[above]{$\gamma_1$} (D)
(A) edge node[left]{$(\partial\rho\circ\varphi^{-1}\circ i_l)\cdot{\it id}$} (G)
(G) edge node[below]{$\rho$} (S)
(G') edge node[below]{$\rho$} (S)
(D) edge node[right]{$(\partial\rho\circ\varphi^{-1}\circ i_l)\cdot{\it id}$} (G');
\end{tikzpicture}\\[-0.2cm]
{\small Diagram 1.}
\end{center}
We get the sequence \vspace{-0.1cm}
$$(f,g)\mapsto ((\{\ast_{\emptyset}\},f),g)\mapsto ((\{\ast_{\emptyset},\ast_{\ast_{\emptyset}}\},f),g)\mapsto ((\{\ast_{\emptyset},\ast_{X_1}\},f),g)\mapsto ((f,\ast_{X_1}),g)\mapsto (f,g)\,.\vspace{-0.1cm}$$
Hence, \eqref{thecomp} is the identity on $\partial S\cdot\partial S$. By the equalities behind the commutations of $D_l,D_b,D_m,D_t$ and $D_r$, we now get the   sequence of equalities
{\footnotesize $$\begin{array}{rrrrrrcll}
&&&&&\rho&=&\rho\circ{\it id}& \\[0.1cm]
&&&&&&=& \rho\circ ((\partial\pi_1\circ\partial\lambda^{\blacktriangle}\circ\varphi^{-1}\circ{\it i_l}\circ(\partial\epsilon_2^{-1}\cdot{\it id})\circ{{\lambda^{\star}}^{-1}})\cdot{\it id})& \\[0.1cm]
&&&&&&=&\rho\circ ((\partial\rho\circ\varphi^{-1}\circ{\it i_l}\circ(\partial\partial\eta_2\cdot{\it id})\circ(\partial\epsilon_2^{-1}\cdot{\it id})\circ{{\lambda^{\star}}^{-1}})\cdot{\it id})& \enspace\enspace\enspace  {{ \mbox{ \small{$(D_l)$}}}}\\[0.1cm]
&&&&&&=&\rho\circ ((\partial\rho\circ\varphi^{-1}\circ{\it i_l})\cdot{\it id})\circ{\gamma_1}\circ(((\partial\partial\eta_2\cdot{\it id})\circ(\partial\epsilon_2^{-1}\cdot{\it id})\circ{{\lambda^{\star}}^{-1}})\cdot{\it id})&\enspace\enspace\enspace {{ \mbox{ \small{$(D_b)$}}}}\\[0.1cm]
&&&&&&=&\rho\circ ((\partial\rho\circ\varphi^{-1}\circ{\it i_l}\circ(\partial\partial\eta_2\cdot{\it id}))\cdot{\it id})\circ{\gamma_1}\circ(((\partial\epsilon_2^{-1}\cdot{\it id})\circ{{\lambda^{\star}}^{-1}})\cdot{\it id})& \enspace\enspace\enspace  {{ \mbox{ \small{$(D_m)$}}}}\\[0.1cm]
&&&&&&=&\rho\circ ((\partial\rho\circ\varphi^{-1}\circ{\it i_l}\circ(\partial\partial\eta_2\cdot{\it id})\circ(\partial\epsilon_2^{-1}\cdot{\it id})\circ{{\lambda^{\star}}^{-1}})\cdot{\it id}) \circ{\tt c}& \enspace\enspace\enspace {{ \mbox{ \small{$(D_t)$}}}}\\[0.1cm]
&&&&&&=& \rho\circ ((\partial\pi_1\circ\partial\lambda^{\blacktriangle}\circ\varphi^{-1}\circ{\it i_l}\circ(\partial\epsilon_2^{-1}\cdot{\it id})\circ{{\lambda^{\star}}^{-1}})\cdot{\it id})\circ{\tt c}& \enspace\enspace\enspace  {{ \mbox{ \small{$(D_r)$}}}}\\[0.1cm]
&&&&&&=&\rho\circ {\tt c}\,,
\end{array}\vspace{-0.2cm}$$}

\noindent which proves the claim.
\end{proof}
As a consequence of Lemma \ref{rhoc}, the verification of the axiom \texttt{(CA1)} comes down to  the verification of its instance $\rho_{21}\circ\gamma_1=\rho_{11}$.
\begin{cor}\label{oh}
The equality $\rho_2\circ\gamma=\rho_1$ holds if and only if the equality $\rho_{21}\circ\gamma_1=\rho_{11}$ holds.
\end{cor}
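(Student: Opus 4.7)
The equation $\rho_2\circ\gamma=\rho_1$ is a statement about morphisms out of the six-fold sum which is the source of $\gamma$, and hence splits into six summand-wise equations $\rho_{2j_i}\circ\gamma_i=\rho_{1i}$, one for each $i\in\{1,\dots,6\}$, where $j_i$ is the index of the summand in the source of $\rho_2$ hit by $\gamma_i$. The ($\Rightarrow$)-direction of the corollary is then immediate from the universal property of the coproduct in ${\bf Spec}$: precomposing $\rho_2\circ\gamma=\rho_1$ with the insertion of the first summand recovers exactly $\rho_{21}\circ\gamma_1=\rho_{11}$.

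For the converse, the plan is to exploit the explicit factorisations of $\gamma_2,\dots,\gamma_6$ provided right before Lemma~\ref{2unit}, each of which expresses $\gamma_i$ as a composition of $\gamma_1$ with pre- and post-composed copies of the commutativity ${\tt c}$ (possibly tensored with identities). Substituting these formulas in the equation $\rho_{2j_i}\circ\gamma_i=\rho_{1i}$, I would push every occurrence of ${\tt c}$ past the intermediate constituents of $\rho_{2j_i}$ and $\rho_{1i}$ --- namely the insertions $i_l,i_r$, the Leibniz isomorphism $\varphi^{-1}$, and the derived action $\partial\rho$ --- using naturality of each. Every such commutativity eventually reaches an outermost application of $\rho$, where Lemma~\ref{rhoc} absorbs it. For the copies of ${\tt c}$ that sit inside a derivative (as happens in $\gamma_4,\gamma_5,\gamma_6$), I would additionally invoke the derived identity $\partial\rho\circ\partial{\tt c}=\partial\rho$, obtained by applying the functor $\partial$ to the equality of Lemma~\ref{rhoc}.

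Once all commutativities have been cleared in this way, each of the five residual equations reduces to the single instance $\rho_{21}\circ\gamma_1=\rho_{11}$, and the hypothesis carries over. The main obstacle is purely bookkeeping: keeping track of which target summand each $\gamma_i$ lands in under the pairing chosen by the author, and confirming that the naturality squares used to transport each ${\tt c}$ past the intermediate morphisms actually commute. These diagram chases are of the same flavour as the ones already displayed in Diagram~1 of the proof of Lemma~\ref{rhoc}, and no new conceptual ingredient is needed beyond Lemma~\ref{rhoc} itself and naturality.
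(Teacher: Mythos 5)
Your proposal is correct and follows exactly the route the paper intends: the paper offers no explicit proof, simply asserting the corollary as a consequence of Lemma~\ref{rhoc}, and your argument (restrict to the first summand for one direction; for the other, use the factorisations of $\gamma_2,\dots,\gamma_6$ through $\gamma_1$ and ${\tt c}$, transport the commutativities by naturality, and absorb them via $\rho\circ{\tt c}=\rho$ and its derivative $\partial\rho\circ\partial{\tt c}=\partial\rho$) is precisely the bookkeeping the paper leaves implicit.
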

Together with the fact that $\kappa_S^{\blacktriangle}=\lambda_S^{\blacktriangle}\circ {\tt c}$, Lemma \ref{rhoc} is also used to prove  the algebraic analogue of the right unitality law \texttt{(U2)}.
\begin{cor}\label{run}
The morphism $\eta_2:E_2\rightarrow S$ satisfies the coherence condition given by the commutation of the diagram
\vspace{-0.45cm}
\begin{center}
\begin{tikzpicture}[scale=1.5]
\node (A)  at (1.8,1) {\footnotesize $S\blacktriangle E_2$};
\node (B) at (0,1) {\footnotesize $S\blacktriangle S$};
\node (E) at (1.8,0) {\footnotesize $S^{\bullet}$};
\node (D) at (0,0) {\footnotesize $S$};
\path[->,font=\scriptsize]
(A) edge node[above]{${\it id}_S\blacktriangle \eta_2$} (B)
(A) edge node[right]{$\kappa_S^{\blacktriangle}$} (E)
(B) edge node[left]{$\rho$} (D)
(E) edge node[below]{${\pi_1}_{S}$} (D);
\end{tikzpicture}
\vspace{-0.2cm}
\end{center}
\end{cor}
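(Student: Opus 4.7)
The plan is to reduce the right-unit diagram to the left-unit diagram using the three pieces we already have in hand: the definition $\kappa_S^{\blacktriangle} = \lambda_S^{\blacktriangle} \circ {\tt c}$, the axiom \texttt{(CA2)}, and Lemma \ref{rhoc}. The only other ingredient needed is the naturality of the commutator ${\tt c}$ for the $\blacktriangle$-product, which is inherited from the commutator for $\cdot$ applied to $\partial S\cdot\partial T$.

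Concretely, I would start from the right leg ${\pi_1}_S\circ\kappa_S^{\blacktriangle}$ and rewrite $\kappa_S^{\blacktriangle}$ as $\lambda_S^{\blacktriangle}\circ {\tt c}_{S,E_2}$. Then I would apply \texttt{(CA2)} to replace ${\pi_1}_S\circ\lambda_S^{\blacktriangle}$ with $\rho\circ(\eta_2\blacktriangle{\it id}_S)$, obtaining the intermediate expression $\rho\circ(\eta_2\blacktriangle{\it id}_S)\circ{\tt c}_{S,E_2}$. At this point the naturality square
\[
\begin{tikzpicture}[scale=1.5]
\node (A) at (0,1) {\footnotesize $S\blacktriangle E_2$};
\node (B) at (2,1) {\footnotesize $S\blacktriangle S$};
\node (C) at (0,0) {\footnotesize $E_2\blacktriangle S$};
\node (D) at (2,0) {\footnotesize $S\blacktriangle S$};
\path[->,font=\scriptsize]
(A) edge node[above]{${\it id}_S\blacktriangle \eta_2$} (B)
(A) edge node[left]{${\tt c}_{S,E_2}$} (C)
(B) edge node[right]{${\tt c}_{S,S}$} (D)
(C) edge node[below]{$\eta_2\blacktriangle{\it id}_S$} (D);
\end{tikzpicture}
\]
lets me commute ${\tt c}$ past, yielding $\rho\circ {\tt c}_{S,S}\circ({\it id}_S\blacktriangle\eta_2)$. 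Finally, Lemma \ref{rhoc} collapses $\rho\circ{\tt c}_{S,S}$ to $\rho$, leaving $\rho\circ({\it id}_S\blacktriangle\eta_2)$, which is precisely the left leg of the diagram to be commuted.

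There is no genuine obstacle here: the computation is a four-step rewrite, and each step is justified by a single already-established fact. The only thing to be careful about is matching indices on ${\tt c}$ so that the naturality square is applied with the correct endpoints (${\tt c}_{S,E_2}$ on the source side, ${\tt c}_{S,S}$ on the target side), and checking that Lemma \ref{rhoc} is used with the instance ${\tt c}_{S,S}$ that arises here, not with some other commutator. Once these bookkeeping details are set, the chain of equalities reads off directly and constitutes the proof.
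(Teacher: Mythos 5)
Your proposal is correct and matches the paper's intended argument: the paper derives this corollary precisely from the identity $\kappa_S^{\blacktriangle}=\lambda_S^{\blacktriangle}\circ{\tt c}$, the axiom \texttt{(CA2)}, and Lemma \ref{rhoc}, with the naturality of ${\tt c}$ (inherited from the commutator of $\cdot$ applied to the derivatives) supplying the same square you write down. The bookkeeping on the indices of ${\tt c}$ is exactly as you describe, so nothing further is needed.
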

The following theorem ensures that Definition \ref{copeo} does the job. In order to make its statement concise (as well as the statements of Theorem \ref{2} and Theorem \ref{3} later), we adopt the following convention.
\begin{con}\label{deq}
We say that two definitions are {\em equivalent} if, given a  structure as specified by the first definition, one can construct a structure as specified by the second definition, and vice-versa, in such a way that going from one structure to the other one, and back, leads to a structure isomorphic   to the starting one. If the latter transformations  results exactly  in the initial structure (rather than an isomorphic one), we say that  the corresponding definitions are {\em strongly equivalent}.
\end{con} 
\begin{thm}\label{1}
Definition \ref{entriesonly} (entries-only, componential) and  Definition \ref{copeo} (entries-only, algebraic)   are strongly equivalent definitions of cyclic operads.
\end{thm}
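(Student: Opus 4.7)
The plan is to establish a bijective, axiom-preserving correspondence between componential and algebraic data on a fixed underlying species $S = {\EuScript C}$. From a componential cyclic operad $({\EuScript C}, {_{x}\circ_{y}}, {\it id}_{x,y})$ I would define $\eta_2 : E_2 \rightarrow S$ by setting $(\eta_2)_{\{x,y\}}(\{x,y\}) = {\it id}_{x,y}$, which is well-defined precisely by \texttt{(UP)}, and $\rho : \partial S \cdot \partial S \rightarrow S$ on $(f,g) \in (S \blacktriangle S)(X)$ with $f \in S(X_1 \cup \{\ast_{X_1}\})$ and $g \in S(X_2 \cup \{\ast_{X_2}\})$ by $\rho_X(f,g) = f \,{_{\ast_{X_1}}\!\circ_{\ast_{X_2}}}\, g$. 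Conversely, from an algebraic cyclic operad $(S, \rho, \eta_2)$ I would recover the partial composition by
\[
f \,{_x\circ_y}\, g \;=\; \rho_Z\bigl(S(\sigma_1)(f),\, S(\sigma_2)(g)\bigr),
\]
where $Z = X \backslash \{x\} \cup Y \backslash \{y\}$, $Z_1 = X \backslash \{x\}$, $Z_2 = Y \backslash \{y\}$ and $\sigma_1, \sigma_2$ are the canonical renamings of $\ast_{Z_1}, \ast_{Z_2}$ back to $x, y$; and the unit is retrieved as $(\eta_2)_{\{x,y\}}(\{x,y\})$. Inspection shows that these two procedures are mutually inverse on the nose, which will deliver the strong-equivalence clause required by Convention \ref{deq}.

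I would then match axioms individually. The naturality of $\rho$, after propagating the renamings $\sigma_1, \sigma_2$ through the definition of $\blacktriangle$, is exactly the equivariance axiom \texttt{(EQ)}; naturality of $\eta_2$ is \texttt{(UP)}. Unfolding \texttt{(CA2)} on the unique $E_2$-structure over a doubleton, using the explicit description of $\lambda_S^{\blacktriangle}$ from the proof of Lemma \ref{2unit}, reads off as \texttt{(U1)}; the right-hand triangle of Corollary \ref{run} gives \texttt{(U2)}, which on the componential side is redundant by Lemma \ref{j}.

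The core step is associativity. By Corollary \ref{oh} the axiom \texttt{(CA1)} collapses to the single equation $\rho_{21} \circ \gamma_1 = \rho_{11}$ on $(\partial\partial S \cdot \partial S) \cdot \partial S$. I would evaluate both sides on a triple $((f,g), h)$ indexed by a decomposition $(X_1, X_2, X_3)$ of a finite set $X$ and unfold $\gamma_1 = \alpha^{-1} \circ ({\tt ex} \cdot {\tt c}) \circ \alpha$ together with the Leibniz constituent $\varphi^{-1}$ used in $\rho_{11}$ and $\rho_{21}$. A careful tracking of the basepoints shows that $\rho_{11}((f,g), h)$ produces $(f \,{_{\ast}\circ_{\ast}}\, g) \,{_{\ast}\circ_{\ast}}\, h$ while $\rho_{21}(\gamma_1((f,g), h))$ produces $(f \,{_{\ast}\circ_{\ast}}\, h) \,{_{\ast}\circ_{\ast}}\, g$, so their equality is literally \texttt{(A1)}. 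Lemma \ref{rhoc} moreover yields the algebraic analogue of \texttt{(CO)}, which together with the verified parallel associativity gives all other composite instances on the componential side via Lemma \ref{j}.

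The main obstacle is the bookkeeping in the associativity step: the formal basepoints $\ast_{X_1}, \ast_{X_1 \cup X_2}$ and their $\partial$-extensions multiply as $\varphi^{-1}$, $\alpha$ and ${\tt ex}$ are composed, and one must verify that the names produced on both sides of $\rho_{21} \circ \gamma_1 = \rho_{11}$ coincide up to the single renaming already built into $\rho$ itself — not up to some additional equivariance correction — so that \texttt{(A1)} comes out cleanly without a spurious bijection on the right. Once this bookkeeping is set up, all remaining verifications are routine diagram chases using naturality of the canonical isomorphisms from Table 3.
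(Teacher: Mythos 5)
Your proposal follows essentially the same route as the paper's proof: the same constructions in both directions ($\rho_X(f,g)=f\,{_{\ast_{X_1}}\!\circ_{\ast_{X_2}}}\,g$ and, conversely, $f\,{_x\circ_y}\,g=\rho(f^{\sigma_1},g^{\sigma_2})$ after renaming the chosen entries to basepoints), the same reduction of \texttt{(CA1)} to the single component $\rho_{21}\circ\gamma_1=\rho_{11}$ via Corollary \ref{oh} and Lemma \ref{rhoc}, the same matching of \texttt{(CA2)} with \texttt{(U1)}, of naturality with \texttt{(EQ)} and \texttt{(UP)}, and the same on-the-nose round-trip argument (the paper checks it using \texttt{(EQ)} in one direction and the triviality of the renamings in the other). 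The basepoint bookkeeping you flag is exactly what the paper's diagram chase of Diagram 2 carries out, resolving it with one application each of \texttt{(A1)} and \texttt{(EQ)}.
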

\begin{proof} 
We define   transformations in both directions  and  show that going from one structure to the other one, and back, leads to the same structure.  \\[0.15cm]
\textsc{[componential $\Rightarrow$ algebraic]} Let ${\EuScript C}:{\bf Bij}^{\it op}\rightarrow {\bf Set}$ be an entries-only cyclic operad defined in components.  For a finite set $X$, a  decomposition $(X_1,X_2)$  of $X$, $f\in \partial {\EuScript C}(X_1)$ and $g\in\partial {\EuScript C}(X_2)$, $\rho_X:(\partial {\EuScript C}\cdot \partial {\EuScript C})(X)\rightarrow {\EuScript C}(X)$ is defined as  $$\rho_X(f,g)=f\, {_{\ast_{X_1}}\!\!\circ_{\ast_{X_2}}} \,\, g \,.$$ For a two-element set, say $\{x,y\}$, the morphism  $\eta:E_2\rightarrow {\EuScript C}$ is defined as   $\eta_{\{x,y\}}:\{x,y\}\mapsto {\it id}_{x,y}$. Otherwise, $\eta_X$ is the empty function. We now verify the axioms.\\[0.1cm]
\indent \texttt{(CA1)} We prove the equality $\rho_{21}\circ\gamma_1\!=\!\rho_{11}$ by chasing Diagram 2, obtained by unfolding the definitions of the three morphisms involved. The axiom \texttt{(CA1)} will then follow by Corollary 3.11.
\begin{center}
\begin{tikzpicture}[scale=1.5]
\node (A)  at (0,2) {\footnotesize $(\partial\partial S\cdot\partial S)\cdot \partial S$};
\node (B) at (2.5,2) {\footnotesize $\partial\partial S\cdot(\partial S\cdot \partial S)$};
\node (C) at (5,2) {\footnotesize $\partial\partial S\cdot(\partial S\cdot \partial S)$};
\node (D) at (7.5,2) {\footnotesize $(\partial\partial S\cdot\partial S)\cdot \partial S$};
\node (F)  at (0,1) {\footnotesize $\partial(\partial S\cdot\partial S)\cdot\partial S$};
\node (F')  at (7.5,1) {\footnotesize $\partial(\partial S\cdot\partial S)\cdot\partial S$};
\node (G)  at (2.4,1) {\footnotesize $\partial S\cdot\partial S$};
\node (G')  at (5.1,1) {\footnotesize $\partial S\cdot\partial S$};
\node (S)  at (3.75,1) {\footnotesize $S$};
\path[->,font=\scriptsize]
(A) edge node[above]{$\alpha$} (B)
(B) edge node[above]{${\tt ex}\cdot {\tt c}$} (C)
(C) edge node[above]{${\alpha}^{-1}$} (D)
(A) edge node[left]{$(\varphi^{-1}\circ i_l)\cdot{\it id}$} (F)
(F) edge node[below]{$\partial\rho\cdot{\it id}$} (G)
(G) edge node[below]{$\rho$} (S)
(G') edge node[below]{$\rho$} (S)
(D) edge node[right]{$(\varphi^{-1}\circ i_l)\cdot {\it id}$} (F')
(F') edge node[below]{$\partial\rho\cdot{\it id}$} (G');
\end{tikzpicture}\\
\vspace{-0.2cm}
{\small Diagram 2.}
\end{center}

\indent Let $((f,g),h)\in ((\partial\partial S\cdot\partial S)\cdot \partial S)(X)$, and suppose that 
$$f\in \partial\partial S(X'_{1}), \enspace  g\in \partial S(X''_1), \enspace h\in\partial S(X_2), \enspace (f,g)\in (\partial\partial S\cdot\partial S)(X_1), $$ where $(X'_1,X''_1)$ is a  decomposition of $X_1$ and $(X_1,X_2)$ is a  decomposition of $X$.\\[0.1cm]
\indent By chasing the diagram to the down-right, we get the following sequence:  \vspace{-0.1cm}{\small $$\begin{array}{lclllll} ((f,g),h)\!&\!\mapsto \!& \! ((f^{\tau_1^{+}},g),h)\!& \!\mapsto \!&\! (f^{\tau_1^{+}}\, {_{\ast_{X'_1\cup\{\ast_{X_1}\}}}\!\circ_{\ast_{X''_1}}} \,\, g , h)\!&\!\mapsto \!(f^{\tau_1^{+}}\, {_{\ast_{X'_1\cup\{\ast_{X_1}\}}}\circ_{\ast_{X''_1}}} \,\, g)\, {_{\ast_{X_1}}\!\circ_{\ast_{X_2}}} \,\, h\,.\\
\end{array} \vspace{-0.5cm}$$}

\noindent The first step here corresponds to the application of $(\varphi^{-1}\circ i_l)\cdot{\it id}$ on $((f,g),h)$, and, therefore, it involves the renaming $\tau_1: X'_1\cup\{\ast_{X_1}\}\rightarrow X'_1\cup\{\ast_{X'_1}\}$ of $\ast_{X'_1}$ to $\ast_{X_1}$, i.e. the action of $\partial S(\tau_1)=S(\tau_1^{+})$ on $f\in \partial S(X'_1\cup\{\ast_{X'_1}\})=S(X'_1\cup\{\ast_{X'_1}\}\cup\{\ast_{X'_1\cup\{\ast_{X'_1}\}}\})$, where $\tau_1^{+}$ is the $\partial$-extension of $\tau_1$.
 Therefore,  $$f^{\tau_1^{+}}\in\partial S(X'_1\cup\{\ast_{X_1}\})=S(X'_1\cup\{\ast_{X_1}\}\cup\{\ast_{X'_1\cup\{\ast_{X_1}\}}\})\,,$$ and $$(f^{\tau_1^{+}},g)\in\partial(\partial S\cdot\partial S)(X_1)=(\partial S\cdot\partial S)(X_1\cup\{\ast_{X_1}\}). $$    The  action of $\partial\rho$ on $(f^{\tau_1^{+}},g)$  composes   $f^{\tau_1^{+}}$ and $g$ along the corresponding distinguished entries $\ast_{X'_1\cup\{\ast_{X_1}\}}$ and $\ast_{X''_1}$ (while carrying over the distinguished element $\ast_{X_1}$ from the pair to the   composite of its components), and, finally, the action of $\rho$ on $(f^{\tau_1^{+}}\, {_{\ast_{X'_1\cup\{\ast_{X_1}\}}}\!\circ_{\ast_{X''_1}}} \, g, h)$  results  in the partial composition of the two components along  $\ast_{X_1}$ and $\ast_{X_2}$.\\[0.1cm]
\indent The sequence on the right-down side consists of the sequence   
 $$\begin{array}{lclllll}
((f,g),h) &\mapsto & (f,(g,h))&\mapsto & (f^{\varepsilon},(h,g))   &\mapsto & ((f^{\varepsilon},h),g),\\
\end{array} $$
arising from the action of $\gamma_1={\alpha}^{-1}\circ({\tt ex}\cdot {\tt c})\circ {\alpha}$, where \vspace{-0.1cm}$$\varepsilon:X'_1\cup\{\ast_{X'_1},\ast_{X'_1\cup\{\ast_{X'_1}\}}\}\rightarrow X'_1\cup\{\ast_{X'_1},\ast_{X'_1\cup\{\ast_{X'_1}\}}\}\vspace{-0.1cm}$$ exchanges $\ast_{X'_1}$ and $\ast_{X'_1\cup\{\ast_{X'_1}\}}$,   followed by the sequence
\vspace{-0.1cm}{\small$$\begin{array}{lclll}
 ((f^{\varepsilon},h),g) \!&\! \mapsto\! &\! (((f^{\varepsilon})^{\tau_2^{+}},h),g)\! & & \\[0.1cm]
& \!\mapsto\! &\!(((f^{\varepsilon})^{\tau_2^{+}}\, {_{\ast_{X'_1\cup\{\ast_{X'_1\cup X_2}\}}}\!\!\circ_{\ast_{X_2}}} \,\, h) , g) \!&\! \mapsto \!& \!((f^{\varepsilon})^{\tau_2^{+}}\, {_{\ast_{X'_1\cup\{\ast_{X'_1\cup X_2}\}}}\!\!\circ_{\ast_{X_2}}} \,\, h)\, {_{\ast_{X'_1\cup X_2}}\!\!\circ_{\ast_{X''_1}}} \,\, g\,, \end{array}\vspace{-0.2cm}$$}

\noindent corresponding to the action of $\rho_{21}$. Similarly as before, the action of $(\varphi^{-1}\circ i_l)\cdot{\it id}$ on $((f^{\varepsilon},h),g)$ involves the renaming $\tau_2:X'_1\cup \{\ast_{X'_1\cup X_2}\}\rightarrow X'_1\cup\{\ast_{X'_1}\}$ of $\ast_{X'_1}$ to $\ast_{X'_1\cup X_2}$, i.e. the action of $\partial S(\tau_2)=S(\tau_2^{+})$ on $f^{\varepsilon}\in \partial S(X'_1\cup\{\ast_{X'_1}\})=S(X'_1\cup\{\ast_{X'_1}\}\cup \{\ast_{X'_1\cup \{\ast_{X'_1}\}}\})$, where $\tau_2^{+}$ is the $\partial$-extension of $\tau_2$. 
This results in  $$(f^{\varepsilon})^{\tau_2^{+}}\in  \partial S(X'_1\cup\{\ast_{X'_1\cup X_2}\})=S(X'_1\cup\{\ast_{X'_1\cup X_2}\}\cup \{\ast_{X'_1\cup\{\ast_{X'_1\cup X_2}\}}\}),$$ i.e.  $$((f^{\varepsilon})^{\tau_2^{+}},h)\in \partial (\partial S\cdot\partial S)(X'_1\cup X_2)=(\partial S\cdot\partial S)(X'_1\cup X_2\cup \{\ast_{X'_1\cup X_2}\}). $$    The application of $\partial\rho$ on $((f^{\varepsilon})^{\tau_2^{+}},h)$  composes $(f^{\varepsilon})^{\tau_2^{+}}$ and $h$ along $\ast_{X'_1\cup \{\ast_{X'_1\cup X_2}\}}$ and $\ast_{X_2}$ (and carries over the distinguished element $\ast_{X'_1\cup X_2}$ to the composition). Finally, the application of $\rho$ on $((f^{\varepsilon})^{\tau_2^{+}}\, {_{\ast_{X'_1\cup \{\ast_{X'_1\cup X_2}\}}}\circ_{\ast_{X_2}}} \,h , g) $ composes the two components along  $\ast_{X'_1\cup X_2}$ and $\ast_{X''_1}$.\\[0.1cm]
\indent The claim follows by the sequence of equalities 
\vspace{-0.1cm}$$\begin{array}{lclr}
(f^{\tau_1^{+}}\, {_{\ast_{X'_1\cup\{\ast_{X_1}\}}}\circ_{\ast_{X''_1}}} \,\, g)\, {_{\ast_{X_1}}\circ_{\ast_{X_2}}} \,\, h &=& (f^{\tau_1^{+}}\,  {_{\ast_{X_1}}\circ_{\ast_{X_2}}} \,\, h)\, {_{\ast_{X'_1\cup\{\ast_{X_1}\}}}\circ_{\ast_{X''_1}}} \,\, g & \mbox{{\small{\texttt{(A1)}}}} \\
&=& (((f^{\tau_1^{+}})^{\sigma}\,  {_{\ast_{X'_1\cup\{\ast_{X'_1\cup X_2}\}}}\circ_{\ast_{X_2}}} \,\, h)\, {_{\ast_{X'_1\cup X_2}}\circ_{\ast_{X''_1}}} \,\, g & \mbox{{\small{\texttt{(EQ)}}}}\\[-0.1cm]
&=&  ((f^{\varepsilon})^{\tau_2^{+}}\, {_{\ast_{X'_1\cup \{\ast_{X'_1\cup X_2}\}}}\circ_{\ast_{X_2}}} \,\, h)\, {_{\ast_{X'_1\cup X_2}}\circ_{\ast_{X''_1}}} \,\, g,
\end{array}\vspace{-0.1cm}$$
where $\sigma:X'_1\cup \{\ast_{X'_1\cup X_2},\ast_{X'_1\cup \{\ast_{X'_1\cup X_2}\}}\}\rightarrow X'_1\cup \{\ast_{X_1},\ast_{X'_1\cup \{\ast_{X_1}\}}\}$ renames $\ast_{X_1}$ to $\ast_{X'_1\cup \{\ast_{X'_1\cup X_2}\}}$ and $\ast_{X'_1\cup \{\ast_{X_1}\}}$ to   $\ast_{X'_1\cup X_2}$. The last equality in the sequence holds by the equality  $\tau_1^{+}\circ\sigma=\tau_2^{+}\circ\varepsilon$.\\[0.1cm]
\indent \texttt{(CA2)} The commutation of the diagram
\begin{center}
\begin{tikzpicture}[scale=1.5]
\node (A)  at (0,1) {\footnotesize $(E_2\blacktriangle S)(X)$};
\node (B) at (2.5,1) {\footnotesize $(S\blacktriangle S)(X)$};
\node (C) at (2.5,0) {\footnotesize $S(X)$};
\node (D) at (0,-0) {\footnotesize $S^{\bullet}(X)$};
\path[->,font=\scriptsize]
(A) edge node[above]{$(\eta\blacktriangle{\it id}_S)_X$} (B)
(A) edge node[left]{${\lambda^{\blacktriangle}_S}_X$} (D)
(B) edge node[right]{$\rho_X$} (C)
(D) edge node[below]{${{\pi_1}_S}_X$} (C);
\end{tikzpicture}
\end{center}
for $X=\emptyset$ follows since $(E_2\blacktriangle S)(\emptyset)=\emptyset $ and
 since there is a unique empty function with codomain $S(X)$. If $X\neq\emptyset$, then for $(\{x,\ast_{\{x\}}\},f)\in (E_2\blacktriangle S)(X)$ (see \eqref{e2}), by chasing the down-right side of the diagram,  we get \vspace{-0.1cm}$$(\{x,\ast_{\{x\}}\},f)\mapsto (f^{\sigma},x)\mapsto f^{\sigma},\vspace{-0.1cm}$$ where $\sigma$ renames $\ast_{X\backslash\{x\}}$ to $x$. By going to the right-down, we get \vspace{-0.1cm}$$(\{x,\ast_{\{x\}}\},f)\mapsto ({\it id}_{x,\ast_{\{x\}}},f)\mapsto {\it id}_{x,\ast_{\{x\}}}\, {_{\ast_{\{x\}}}\!\circ_{\ast_{X\backslash\{x\}}}} \,\, f \,.\vspace{-0.1cm}$$ The equality $f^{\sigma}={\it id}_{x,\ast_{\{x\}}}\, {_{\ast_{\{x\}}}\!\circ_{\ast_{X\backslash\{x\}}}} \,\, f$ follows easily by {\texttt{(U1)}} and {\texttt{(EQ)}}. \\[0.18cm]
\textsc{[algebraic $\Rightarrow$ componential]}  Suppose that $S:{\bf Bij}^{\it op}\rightarrow {\bf Set}$ is an algebraic cyclic operad,   let $X$ and $Y$ be non-empty finite sets, let $x\in X$ and $y\in Y$ be such that $X\backslash\{x\}\cap Y\backslash\{y\}=\emptyset$, and let $f\in S(X)$ and $g\in S(Y)$. Then $(f,x)\in S^{\bullet}(X)$ and $(g,y)\in S^{\bullet}(Y)$. Therefore, for  $$(\{x\},f^{\sigma_1})=\delta(f,x)\in E_1\cdot\partial S(X)\mbox{ \enspace and \enspace} (\{y\},g^{\sigma_2})=\delta(g,y)\in E_1\cdot\partial S(Y), $$ where $\delta$ is the isomorphism from Remark \ref{partial_pointed}, we have that  $f^{\sigma_1}\in \partial S(X\backslash\{x\})$ and $g^{\sigma_2}\in \partial S(Y\backslash\{y\})$. We define ${{_{x}\circ_{y}}}:{\EuScript C}(X)\times {\EuScript C}(Y)\rightarrow {\EuScript C}(X\backslash\{x\}\cup Y\backslash\{y\})$ as \vspace{-0.1cm}$$f\, {_{x}\circ_{y}} \,\, g =\rho(f^{\sigma_1},g^{\sigma_2}).\vspace{-0.1cm}$$
For a two-element set, say $\{x,y\}$, the distinguished element ${\it id}_{x,y}\in S(\{x,y\})$ is $\eta_{\{x,y\}}(\{x,y\})$. We move on to the verification of the axioms.\\[0.1cm]
\indent {\texttt{(A1)}}  Let $f$ and $g$ be as above, and let $u\in X\backslash\{x\}$, $h\in S(Z)$ and $z\in Z$.
We use the naturality of $\rho$ and the commutation  of Diagram 2 to prove the equality $(f\, {_{x}\circ_{y}}\,\, g)\,\,{_{u}\circ_z}\, h =  (f {_{u}\circ_{z}}\,\, h)\,\,{_{x}\circ_{y}}\, g$. Since it is not evident by which element we should start the diagram chasing in order to reach $(f\, {_{x}\circ_{y}}\,\, g)\,\,{_{u}\circ_z}\, h$, we shall first express this composition via  $\rho$, and then reshape the expression we obtained towards an equal one, ``accepted" by the diagram.\\
\indent Firstly, we have that $$(f\, {_{x}\circ_{y}} \,\, g , u)=(\rho(f^{\sigma_1},g^{\sigma_2}),u)\in S^{\bullet}(X\backslash\{x\}\cup Y\backslash\{y\})  \mbox{ \enspace\enspace and \enspace\enspace}  (h,z)\in S^{\bullet}(Z). $$ By applying the isomorphism $\delta$ from Remark \ref{partial_pointed} on these two elements, we get $$(\{u\},(f\, {_{x}\circ_{y}} \,\, g)^{\kappa_1})=\delta(f\, {_{x}\circ_{y}} \,\, g , u) \mbox{ \enspace\enspace and \enspace\enspace}  (\{z\},h^{\kappa_2})=\delta(h,z),$$ 
where $\kappa_1:X\backslash\{x,u\}\cup Y\backslash\{y\}\cup \{\ast_{X\backslash\{x,u\}\cup Y\backslash\{y\}}\}\rightarrow X\backslash\{x\}\cup Y\backslash\{y\}$  renames $u$ to $\ast_{X\backslash\{x,u\}\cup Y\backslash\{y\}}$ and $\kappa_2:Z\backslash\{z\}\cup \{\ast_{Z\backslash\{z\}}\}\rightarrow Z$ renames $z$ to $\ast_{Z\backslash\{z\}}$. Therefore, 
 $$(f\, {_{x}\circ_{y}} \,\, g)^{\kappa_1}\in \partial S(X\backslash\{x,u\}\cup Y\backslash\{y\}) \mbox{ \enspace\enspace and \enspace\enspace} h^{\kappa_2}\in\partial S(Z\backslash\{z\}), $$ and  the left hand side of the equality  \texttt{(A1)} becomes
 $$(f\, {_{x}\circ_{y}} \,\, g)\, {_{u}\circ_{z}} \,h=\rho(\rho(f^{\sigma_1}, g^{\sigma_2})^{\kappa_1},h^{\kappa_2}).$$
 
\indent Next, notice that the shape of  $\rho(f^{\sigma_1}, g^{\sigma_2})^{\kappa_1}\in S(X\backslash\{x,u\}\cup Y\backslash\{y\}\cup \{\ast_{X\backslash\{x,u\}\cup Y\backslash\{y\}}\}),$ makes the element $\rho(\rho(f^{\sigma_1}, g^{\sigma_2})^{\kappa_1},h^{\kappa_2})$  not explicitely reachable in Diagram 2. However, $\rho(f^{\sigma_1}, g^{\sigma_2})^{\kappa_1}$ is the result of chasing to the down-left the diagram below
\begin{center}
\begin{tikzpicture}[scale=1.5]
\node (A)  at (0,1) {\footnotesize $(S\blacktriangle S)(X\backslash\{x\}\cup Y\backslash\{y\})$};
\node (B) at (5,1) {\footnotesize $(S\blacktriangle S)(X\backslash\{x\}\cup Y'\backslash\{y\})$};
\node (C) at (5,0) {\footnotesize $S(X'\backslash\{x\}\cup Y\backslash\{y\})$};
\node (D) at (0,0) {\footnotesize $S(X'\backslash\{x\}\cup Y\backslash\{y\})$};
\path[->,font=\scriptsize]
(A) edge node[above]{$(S\blacktriangle S)(\kappa_1)$} (B)
(A) edge node[left]{$\rho_{X\backslash\{x\}\cup Y\backslash\{y\}}$} (D)
(B) edge node[right]{$\rho_{X'\backslash\{x\}\cup Y\backslash\{y\}}$} (C)
(D) edge node[below]{$S(\kappa_1)$} (C);
\end{tikzpicture}
\end{center}
\noindent where $X'=X\backslash\{u\}\cup \{\ast_{X\backslash\{x,u\}\cup Y\backslash\{y\}}\}$, starting with $(f^{\sigma_1},g^{\sigma_2})$. This diagram commutes as an instance of the naturality condition for $\rho$. Let us chase it to the right. Firstly, we have that 
 $$
(S\blacktriangle S)(\kappa_1)(f^{\sigma_1},g^{\sigma_2})=((\partial S)(\nu_1)(f^{\sigma_1}),(\partial S)(\nu_2)(g^{\sigma_2}))= (S(\nu_1^{+})(f^{\sigma_1}),S(\nu_2^{+})(g^{\sigma_2})) \,, $$
where $\nu_1^{+}$ and $\nu_2^{+}$ are the $\partial$-extensions of $\nu_1:X\backslash\{x,u\}\!\cup \!\{\ast_{X\backslash\{x,u\}\cup Y\backslash\{y\}}\}\rightarrow X\backslash\{x\}$, which renames $u$ to $\ast_{X\backslash\{x,u\}\cup Y\backslash\{y\}}$, and  the identity $\nu_2:Y\backslash\{y\}\rightarrow Y\backslash\{y\}$, respectively.
Therefore,  the result of chasing the diagram on the right is $\rho((f^{\sigma_1})^{\nu_1^{+}},g^{\sigma_2})$. Consequently, we have that  $$(f\, {_{x}\circ_{y}} \,\, g)\, {_{u}\circ_{z}} \,h=\rho(\rho((f^{\sigma_1})^{\nu_1^{+}},g^{\sigma_2}),h^{\kappa_2}). $$

\indent On the other hand, chasing Diagram 2 in order to reach $(f\, {_{x}\circ_{y}} \,\, g)\, {_{u}\circ_{z}} \,h$ will certainly include considering the element  $(\{u\},(f^{\sigma_1})^{\alpha^{+}})=\delta(f^{\sigma_1},u)$, where $\alpha^{+}$ is the $\partial$-extension of  $\alpha:X\backslash\{x,u\}\cup \{\ast_{X\backslash\{x,u\}}\}\rightarrow X\backslash\{x\}$, which renames $u$ to $\ast_{X\backslash\{x,u\}}$. Therefore,  $$(f^{\sigma_1})^{\alpha^{+}}\in\partial\partial S(X\backslash\{x,u\}), $$ and, consequently,  $$((f^{\sigma_1})^{\alpha^{+}},g^{\sigma_2})\in (\partial S\cdot\partial\partial S)(X\backslash\{x,u\}\cup Y\backslash\{y\}). $$
Furthermore, this chasing will include the element \vspace{-0.1cm}$$\varphi^{-1}((f^{\sigma_1})^{\alpha^{+}},g^{\sigma_2})= (((f^{\sigma_1})^{\alpha^{+}})^{\tau^{+}},g^{\sigma_2})\in \partial(\partial S\cdot\partial S),\vspace{-0.1cm}$$ where $\tau^{+}$ is the $\partial$-extension of $\tau:X\backslash\{x,u\}\cup \{\ast_{X\backslash\{x,u\}\cup Y\backslash\{y\}}\}\rightarrow X\backslash\{x,u\}\cup \{\ast_{X\backslash\{x,u\}}\}$ that renames $\ast_{X\backslash\{x,u\}}$ to $\ast_{X\backslash\{x,u\}\cup Y\backslash\{y\}}$.\\
\indent As a consequence of the equality $\nu_1^{+}=\alpha^{+}\circ\tau^{+}$, we get the equality
 $$(f\, {_{x}\circ_{y}} \,\, g)\, {_{u}\circ_{z}} \,h=\rho(\rho(((f^{\sigma_1})^{\alpha^{+}})^{\tau^{+}},g^{\sigma_2}),h^{\kappa_2}), $$ in which the right hand side is the result of chasing Diagram 2 to the down-left, starting with   $$(((f^{\sigma_1})^{\alpha^{+}},g^{\sigma_2}),h^{\kappa_2})\in ((\partial\partial S\cdot \partial S)\cdot \partial S)(X\backslash\{x,u\}\cup Y\backslash\{y\}\cup Z\backslash\{z\}). $$ 
\indent The remaining of the proof of \texttt{(A1)}  now unfolds easily:
the sequence obtained by chasing Diagram 2 to the right-down, starting with $(((f^{\sigma_1})^{\alpha^{+}},g^{\sigma_2}),h^{\kappa_2})$, consists of {\small $$(((f^{\sigma_1})^{\alpha^{+}},g^{\sigma_2}),h^{\kappa_2})\mapsto ((f^{\sigma_1})^{\alpha^{+}},(g^{\sigma_2},h^{\kappa_2}))\mapsto (((f^{\sigma_1})^{\alpha^{+}})^{\varepsilon},(h^{\kappa_2},g^{\sigma_2}))\mapsto ((((f^{\sigma_1})^{\alpha^{+}})^{\varepsilon},h^{\kappa_2}),g^{\sigma_2}),\vspace{-0.1cm}$$}

\noindent arising from the action of $\gamma_1={\alpha}^{-1}\circ({\tt ex}\cdot {\tt c})\circ {\alpha}$, where \vspace{-0.1cm}$$\varepsilon: X\backslash\{x,u\}\cup \{\ast_{X\backslash\{x,u\}},\ast_{X\backslash\{x,u\}\cup \{\ast_{X\backslash\{x,u\}}\}}\}\rightarrow X\backslash\{y,u\}\cup \{\ast_{X\backslash\{x,u\}},\ast_{X\backslash\{x,u\}\cup \{\ast_{X\backslash\{x,u\}}\}}\}\vspace{-0.1cm}$$ exchanges $\ast_{X\backslash\{x,u\}}$ and $\ast_{X\backslash\{x,u\}\cup \{\ast_{X\backslash\{x,u\}}\}}$,
 followed by 
{\small $$\begin{array}{lclllll}
((((f^{\sigma_1})^{\alpha^{+}})^{\varepsilon},h^{\kappa_2}),g^{\sigma_2})&\mapsto &(((((f^{\sigma_1})^{\alpha^{+}})^{\varepsilon})^{\omega^{+}},h^{\kappa_2}),g^{\sigma_2}) & &&\\[0.1cm]
&\mapsto & (\rho((((f^{\sigma_1})^{\alpha^{+}})^{\varepsilon})^{\omega^{+}},h^{\kappa_2}),g^{\sigma_2})&\mapsto  & \rho(\rho((((f^{\sigma_1})^{\alpha^{+}})^{\varepsilon})^{\omega^{+}},h^{\kappa_2}),g^{\sigma_2})\, ,  \end{array}\vspace{-0.1cm}$$}

\noindent where $\omega^{+}$ is the $\partial$-extension of the renaming
$\omega:X\backslash\{x,u\}\cup \{\ast_{X\backslash\{x,u\}\cup Z\backslash\{z\}}\}\rightarrow X\backslash\{x,u\}\cup \{\ast_{X\backslash\{x,u\}}\}$ of $\ast_{X\backslash\{x,u\}}$ to $\ast_{X\backslash\{x,u\}\cup Z\backslash\{z\}}$.
 Similarly as we did for the left side of {{\texttt{(A1)}}}, it can be shown that $\rho(\rho((((f^{\sigma_1})^{\alpha^{+}})^{\varepsilon})^{\omega^{+}},h^{\kappa_2}),g^{\sigma_2})=f\, {_{x}\circ_{y}} \,( g\, {_{u}\circ_{z}} \,h),$ which completes the proof of {\texttt{(A1)}}.\\[0.1cm]
\indent \texttt{(EQ)} Let $f$ and $g$ be as above and suppose that $\sigma_1,\sigma_2$ and $\sigma$ are as in \texttt{(EQ)}. Then $(f^{\sigma_1},\sigma_1^{-1}(x))\in S^{\bullet}(X')$ and $(g^{\sigma_2},\sigma_2^{-1}(y))\in S^{\bullet}(Y')$, and we have  $$f^{\sigma_1} \, {_{\sigma_1^{-1}(x)}\circ_{\sigma_2^{-1}(y)}} \,\, g^{\sigma_2}=\rho((f^{\sigma_1})^{\tau_1},(g^{\sigma_2})^{\tau_2}),  $$ where $\tau_1$ renames $\sigma_1^{-1}(x)$ to $\ast_{X'\backslash\{\sigma_1^{-1}(x)\}}$ and $\tau_2$ renames $\sigma_2^{-1}(y)$ to $\ast_{Y'\backslash\{\sigma_2^{-1}(y)\}}$. On the other hand, we have that $$(f\, {_{x}\circ_{y}} \,\,g)^{\sigma}=\rho(f^{\kappa_1},g^{\kappa_2})^{\sigma},$$ where $\kappa_1$ renames $x$ to $\ast_{X\backslash\{x\}}$ and $\kappa_2$ renames $y$ to $\ast_{Y\backslash\{y\}}$. The equality $\rho((f^{\sigma_1})^{\tau_1},(g^{\sigma_2})^{\tau_2})=\rho(f^{\kappa_1},g^{\kappa_2})^{\sigma}$ follows easily by chasing the diagram 
\begin{center}
\begin{tikzpicture}[scale=1.5]
\node (E) at (-2,0) {};
\node (A)  at (0,1) {\footnotesize $(S\blacktriangle S)(X\backslash\{x\}\cup Y\backslash\{y\})$};
\node (B) at (5,1) {\footnotesize $(S\blacktriangle S)(X'\backslash\{\nu_{1}^{-1}(x)\}\cup Y'\backslash\{\nu_{2}^{-1}(y)\})$};
\node (C) at (5,-0) {\footnotesize $S(X'\backslash\{\nu_{1}^{-1}(x)\}\cup Y'\backslash\{\nu_{2}^{-1}(y)\})$};
\node (D) at (0,-0) {\footnotesize $S(X\backslash\{x\}\cup Y\backslash\{y\})$};
\path[->,font=\scriptsize]
(A) edge node[above]{$(S\blacktriangle S)(\sigma)$} (B)
(A) edge node[left]{$\rho_{X\backslash\{x\}\cup Y\backslash\{y\}}$} (D)
(B) edge node[right]{$\rho_{X'\backslash\{\nu_{1}^{-1}(x)\}\cup Y'\backslash\{\nu_{2}^{-1}(y)\}}$} (C)
(D) edge node[below]{$S(\sigma)$} (C);
\end{tikzpicture}
\end{center}
which is an instance of the naturality of $\rho$,  starting with $(f^{\kappa_1},g^{\kappa_2})$.
\\[0.1cm]
\indent {\texttt{(U1)}} For $f\in S(X)$ and $x,y\in X$ we have $${\it id}_{x,y}\,\,{_y\circ_x}\,\, f=\rho({\it id}_{x,y}^{\tau_1},f^{\tau_2})=\rho({\it id}_{\ast_{\{z\}},z},f^{\tau_2}),  $$ where $\tau_1$ renames $y$ to $\ast_{\{x\}}$ and $\tau_2$ renames $x$ to $\ast_{X\backslash \{x\}}$. The right hand side of the previous equality is the result of chasing to the right-down the diagram
\begin{center}
\begin{tikzpicture}[scale=1.5]
\node (A)  at (0,1) {\footnotesize $(E_2\blacktriangle S)(X)$};
\node (B) at (2.5,1) {\footnotesize $(S\blacktriangle S)(X)$};
\node (C) at (2.5,-0) {\footnotesize $S(X)$};
\node (D) at (0,-0) {\footnotesize $S^{\bullet}(X)$};
\path[->,font=\scriptsize]
(A) edge node[above]{$(\eta_2\blacktriangle{\it id}_S)_{X}$} (B)
(A) edge node[left]{${\lambda^{\blacktriangle}_S}_{X}$} (D)
(B) edge node[right]{$\rho_{X}$} (C)
(D) edge node[below]{${{\pi_1}_S}_{X}$} (C);
\end{tikzpicture}
\end{center}
which commutes by \texttt{(CA2)}, starting with $(\{x,\ast_{\{x\}}\},f^{\tau_2})$. 
By chasing it to the down-right we get exactly $f$,  which completes the proof of \texttt{(U1)}.\\[0.1cm]
\indent {\texttt{(UP)}} The preservation of units follows directly by the naturality of $\eta_2$.\\[0.15cm]
\textsc{[componential $\Rightarrow$ algebraic $\Rightarrow$ componential]}    That the transition  \[ 
f\, {_{x}\circ_{y}} \, g \xrsquigarrow{\mbox{\tiny{Def. 3.2}}\,\rightarrow\, \mbox{\tiny{Def. 3.8}}}\rho(f^{\sigma_1},g^{\sigma_2})\xrsquigarrow{\mbox{\tiny{Def. $3.8$}}\,\rightarrow\, \mbox{\tiny{Def. $3.2$}}} f^{\sigma_1} \, {_{\ast_{X\backslash\{x\}}}\circ_{\ast_{Y\backslash\{y\}}}} \, g^{\sigma_2},\vspace{-0.1cm}
\]
from the composition morphism $ {_{x}\circ_{y}}$ of an entries-only cyclic operad ${\EuScript C}$ defined in components, to the composition by means of the multiplication $\rho$,  and back, leads to the same composition operation follows by the axiom \texttt{(EQ)} of ${\EuScript C}$.\\[0.1cm]
\textsc{[algebraic $\Rightarrow$ componential $\Rightarrow$ algebraic]} For  the transition
 $$\rho_X(f,g)\xrsquigarrow{\mbox{\tiny{Def. 3.8}}\,\rightarrow\, \mbox{\tiny{Def. 3.2}}} f \, {_{\ast_{X_1}}\circ_{\ast_{X_2}}} \, g\xrsquigarrow{\mbox{\tiny{Def. 3.2}}\,\rightarrow\, \mbox{\tiny{Def. 3.8}}} \rho_X(f^{\tau_1},g^{\tau_2}),$$ we have that $\rho_X(f,g)=\rho_X(f^{\tau_1},g^{\tau_2})$,
 since $\tau_1$ and $\tau_2$ are identities. It is also easily seen that both transitions preserve units, which makes the proof complete. 
\end{proof}
\subsection{Exchangeable-output definition of cyclic operads}
In this part, we first transfer   Markl's skeletal exchangeable-output definition \cite[Proposition 42]{opsprops} to the non-skeletal setting, by introducing a non-skeletal version of the cycle $\tau_n=(0,1,\dots,n)$ that enriches the operad structure to the structure of cyclic operads. We then deliver the algebraic counterpart of the obtained non-skeletal definition.

\subsubsection{Componential definition}
The  symmetric group $\mathbb{S}_n$, whose action (in the skeletal operad structure)  formalizes  the permutations of the inputs of an $n$-ary operation, together with the action of $\tau_n$,   generates all possible permutations of the set $\{0,1,\dots,n\}$. Hence, they constitute the action of ${\mathbb S}_{n+1}$, which  involves the action of exchanging the output of an operation (now denoted with $0$) with one of the inputs. 
Observe that   ${\mathbb S}_{n+1}$ can equivalently be generated by   extending the action ${\mathbb S}_n$ with transpositions of the form $(i\,\, 0)$, for $1\leq i\leq n$. In the non-skeletal setting, where the inputs of an operation should  be labeled with arbitrary letters, rather than with natural numbers, we mimick these transpositions with actions of the form $D^{\EuScript O}_x:{\EuScript O}(X)\rightarrow {\EuScript O}(X)$, where  $x\in X$ denotes the input of an operation chosen to be exchanged with the output. Here is the resulting definition.
\begin{definition}[\textsc{exchangeable-output, componential}]\label{exoutput}
A   cyclic operad  is a  (componential) symmetric operad ${\EuScript O}$, enriched with actions  $$D^{\EuScript O}_{x}:{\EuScript O}(X)\rightarrow {\EuScript O}(X),$$ defined for all  $x\in X$ and subject to the  axioms given below, wherein, for each of the axioms, we assume that $f\in {\EuScript O}(X)$.\\[0.15cm]
{\em Preservation of units.} \\[0.15cm]
\indent {\em\texttt{(DID)}} $D^{\EuScript O}_{x}({\it id}_x)={\it id}_x$.\\[0.15cm]
{\em Inverse.} For $x\in X$, \\[0.15cm]
\indent {\em\texttt{(DIN)}} $D^{\EuScript O}_{x}(D^{\EuScript O}_{x}(f))=f$.\\[0.15cm]
{\em Equivariance.} For $x\in X$ and an arbitrary bijection $\sigma:Y\rightarrow X$, \\[0.15cm]
\indent {\em\texttt{(DEQ)}} $ D_x(f)^{\sigma} = D_{\sigma^{-1}(x)}(f^{\sigma})$.\\[0.15cm]
{\em Exchange.}
For $x,y\in X$ and a bijection $\sigma:X\rightarrow X$ that renames $x$ to $y$ and $y$ to $x$,\\[0.15cm]
\indent {\em\texttt{(DEX)}}  $D^{\EuScript O}_{x}(f)^{\sigma}=D^{\EuScript O}_x(D^{\EuScript O}_y(f)).$\\[0.15cm]
{\em Compatibility with operadic compositions.} For $g\in {\EuScript O}(Y)$, the following equality holds: \\[0.15cm]
\indent {\em\texttt{(DC1)}}  $D^{\EuScript O}_{y}(f\circ_{x}g)=D^{\EuScript O}_{y}(f)\circ_{x}g$, where $y\in X\backslash\{x\}$, and  \\[0.15cm] 
\indent {\em\texttt{(DC2)}}  $D^{\EuScript O}_{y}(f\circ_{x}g)=D^{\EuScript O}_{y}(g)^{\sigma_1}\circ_{v}D^{\EuScript O}_{x}(f)^{\sigma_2}$, where $y\in Y$, $\sigma_1:Y\backslash\{y\}\cup\{v\}\rightarrow Y$  is a\linebreak 
\phantom{\indent \texttt{(DC2)}} bijection that renames $y$ to  $v$ and $\sigma_2:X\backslash\{x\}\cup\{y\}\rightarrow X$ is a bijection that \linebreak \phantom{\indent \texttt{(DC2)}}  renames $x$ to $y$.
\end{definition}
\begin{con}
For $f\in {\EuScript O}(X)$, $x\in X$ and $y\not\in X\backslash\{x\}$, we write $D^{\EuScript O}_{xy}(f)$ for $D^{\EuScript O}_x(f)^{\sigma}$, where $\sigma:X\backslash\{x\}\cup\{y\}\rightarrow X$ renames $x$ to $y$. 
\end{con}
Notice that $D^{\EuScript O}_{xx}(f)=D^{\EuScript O}_{x}(f)$.  Other simple properties of actions $D_{xy}$ are given in the following two lemmas.
\begin{lem}
For $f\in{\EuScript O}(X)$ and $x\in X$, the following properties hold.\\[0.1cm]
a) For   $y\not\in X\backslash\{x\}$, \\[0.1cm]
\indent  {\em\texttt{(DID)'}} $D^{\EuScript O}_{xy}({\it id}_x)={\it id}_y$, and\\[0.1cm]
\indent  {\em\texttt{(DIN)'}} $D^{\EuScript O}_{yx}(D^{\EuScript O}_{xy}(f))=f$.\\[0.1cm]
b)  For the renaming   $\sigma:X\backslash\{x\}\cup\{y\}\rightarrow X$ of $x$ to $y$,   \\[0.1cm]
\indent {\em\texttt{(DEQ)'}} $D^{\EuScript O}_{y x}(f^{\sigma})=D^{\EuScript O}_{xy}(f)^{\sigma^{-1}}.$  \\[0.1cm]
 c) For $g\in {\EuScript O}(Y)$,\\[0.1cm]
\indent {\em\texttt{(DC1)'}}  $D^{\EuScript O}_{yu}(f\circ_{x}g)=D^{\EuScript O}_{yu}(f)\circ_{x}g$, where $y\in X$ and $u\not\in X\backslash\{y\}$.
\end{lem}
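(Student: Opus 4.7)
The plan is to reduce every identity to the primary axioms of Definition \ref{exoutput} (and the symmetric-operad axioms \texttt{[UP]} and \texttt{[EQ]}) by expanding the notation $D^{\EuScript O}_{xy}(h)=D^{\EuScript O}_{x}(h)^{\sigma}$, $\sigma:Z\backslash\{x\}\cup\{y\}\rightarrow Z$ renaming $x$ to $y$, and then tracking the bijections using the contravariant functoriality of $\EuScript O$ (so that $(h^{\sigma})^{\sigma^{-1}}=h$). No deep machinery is needed; the only real work is bookkeeping of the renamings, which will be the main nuisance.

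For part (a), I would prove \texttt{(DID)'} by writing $D^{\EuScript O}_{xy}({\it id}_x)=D^{\EuScript O}_{x}({\it id}_x)^{\sigma}={\it id}_x^{\sigma}={\it id}_y$, using \texttt{(DID)} for the middle step and \texttt{[UP]} for the last. For \texttt{(DIN)'}, set $\sigma:X\backslash\{x\}\cup\{y\}\rightarrow X$ renaming $x$ to $y$, so that $\sigma^{-1}(x)=y$ and the renaming implicit in $D^{\EuScript O}_{yx}$ is precisely $\sigma^{-1}$. Then
\[
D^{\EuScript O}_{yx}\bigl(D^{\EuScript O}_{xy}(f)\bigr)=D^{\EuScript O}_{y}\bigl(D^{\EuScript O}_{x}(f)^{\sigma}\bigr)^{\sigma^{-1}}
=\bigl(D^{\EuScript O}_{\sigma(y)}(D^{\EuScript O}_{x}(f))^{\sigma}\bigr)^{\sigma^{-1}}
=\bigl(D^{\EuScript O}_{x}(D^{\EuScript O}_{x}(f))^{\sigma}\bigr)^{\sigma^{-1}}=f,
\]
where the second equality uses \texttt{(DEQ)} (reading $D_{y}$ acting on a $\sigma$-transform), the third uses $\sigma(y)=x$, and the last uses \texttt{(DIN)} together with functoriality $({\EuScript O}(\sigma^{-1})\circ{\EuScript O}(\sigma)={\it id})$.

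Part (b) is essentially an immediate rewriting of \texttt{(DEQ)}. With $\sigma$ as given, \texttt{(DEQ)} yields $D^{\EuScript O}_{x}(f)^{\sigma}=D^{\EuScript O}_{\sigma^{-1}(x)}(f^{\sigma})=D^{\EuScript O}_{y}(f^{\sigma})$. Now $D^{\EuScript O}_{xy}(f)^{\sigma^{-1}}=\bigl(D^{\EuScript O}_{x}(f)^{\sigma}\bigr)^{\sigma^{-1}}$ and $D^{\EuScript O}_{yx}(f^{\sigma})=D^{\EuScript O}_{y}(f^{\sigma})^{\sigma^{-1}}$ by unfolding the notation, so the previous displayed equality gives \texttt{(DEQ)'} at once.

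For part (c), I would combine \texttt{(DC1)} with the operadic equivariance axiom \texttt{[EQ]}. Writing $D^{\EuScript O}_{yu}(f\circ_x g)=D^{\EuScript O}_{y}(f\circ_x g)^{\sigma_1}$ with $\sigma_1$ the renaming of $y$ to $u$ on $X\backslash\{x\}\cup Y$, an application of \texttt{(DC1)} turns this into $(D^{\EuScript O}_{y}(f)\circ_x g)^{\sigma_1}$. The bijection $\sigma_1$ splits as $\tau_1|^{X\backslash\{x\}}\cup{\it id}_Y$, where $\tau_1:X\backslash\{y\}\cup\{u\}\rightarrow X$ renames $y$ to $u$; by \texttt{[EQ]} the expression becomes $D^{\EuScript O}_{y}(f)^{\tau_1}\circ_x g=D^{\EuScript O}_{yu}(f)\circ_x g$, as required. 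The only subtle point to flag is that $y$ must actually lie in $X\backslash\{x\}$ for \texttt{(DC1)} to apply; this will be implicit in the assumption ``$y\in X$'' of \texttt{(DC1)'} because $y$ must also be in the underlying set of $f\circ_x g$.
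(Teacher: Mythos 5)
Your proof is correct, and it follows exactly the route the paper intends: the paper states this lemma without proof (as ``simple properties'' of the $D_{xy}$), and its proof of the companion Lemma~\ref{comp} on \texttt{(DCO)} proceeds in precisely your style --- unfold $D_{xy}$ into $D_x(\cdot)^{\sigma}$, apply \texttt{(DEQ)}/\texttt{(DID)}/\texttt{(DC1)}/\texttt{[UP]}/\texttt{[EQ]}, and track the renamings via functoriality. Your remark that \texttt{(DC1)'} implicitly requires $y\in X\backslash\{x\}$ is an accurate reading of the hypotheses.
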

\begin{lem} \label{comp} {\em Composition.}
For $x,y\in X$ and $z\not\in X\backslash\{x,y\}$, \\[0.15cm]
\indent {\em\texttt{(DCO)}} $D^{\EuScript O}_{xy}(D^{\EuScript O}_{yz}(f))=D^{\EuScript O}_{xz}(f).$
\end{lem}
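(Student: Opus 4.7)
The plan is to unfold the abbreviation $D^{\EuScript O}_{ab}(\cdot) = D^{\EuScript O}_a(\cdot)^\sigma$ on both sides, and then reduce the identity to a check that a certain composite of three renamings equals a single renaming of $x$ to $z$. The non-trivial case is $x\neq y$ in $X$ and $z\notin X$: any other configuration compatible with the well-definedness constraints collapses to a triviality via \texttt{(DIN)} or \texttt{(DID)}, so I would dispatch those at the start.

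For the main case, I would set up names for the three renamings involved. Let $\sigma:X\backslash\{y\}\cup\{z\}\to X$ be the renaming of $y$ to $z$, so $D^{\EuScript O}_{yz}(f)=D^{\EuScript O}_y(f)^{\sigma}$. Let $\tau:X\backslash\{x\}\cup\{z\}\to X\backslash\{y\}\cup\{z\}$ be the renaming of $x$ to $y$, so $D^{\EuScript O}_{xy}(D^{\EuScript O}_{yz}(f))=D^{\EuScript O}_x(D^{\EuScript O}_y(f)^{\sigma})^{\tau}$. Finally let $\mu:X\backslash\{x\}\cup\{z\}\to X$ be the renaming of $x$ to $z$, so that the right-hand side is $D^{\EuScript O}_{xz}(f)=D^{\EuScript O}_x(f)^{\mu}$.

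Now I would run the computation in three steps. First, because $x\neq y$ we have $\sigma(x)=x$, so \texttt{(DEQ)} (in the form $D^{\EuScript O}_a(h^{\pi})=D^{\EuScript O}_{\pi(a)}(h)^{\pi}$ obtained by renaming variables in its statement) gives
\[
D^{\EuScript O}_x(D^{\EuScript O}_y(f)^{\sigma})=D^{\EuScript O}_x(D^{\EuScript O}_y(f))^{\sigma}.
\]
Second, \texttt{(DEX)} rewrites the nested action as $D^{\EuScript O}_x(D^{\EuScript O}_y(f))=D^{\EuScript O}_x(f)^{\omega}$, where $\omega:X\to X$ is the transposition exchanging $x$ and $y$. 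Substituting and using the contravariance of ${\EuScript O}$ to collapse the stacked exponents, the left-hand side becomes
\[
D^{\EuScript O}_{xy}(D^{\EuScript O}_{yz}(f))=D^{\EuScript O}_x(f)^{\omega\circ\sigma\circ\tau}.
\]

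Third, I would verify by direct evaluation that $\omega\circ\sigma\circ\tau=\mu$ as bijections $X\backslash\{x\}\cup\{z\}\to X$: on $z$ we compute $\tau(z)=z$, $\sigma(z)=y$, $\omega(y)=x$; on $w\in X\backslash\{x,y\}$ all three are identity; and there is nothing else in the domain. Hence $\omega\circ\sigma\circ\tau=\mu$, so $D^{\EuScript O}_{xy}(D^{\EuScript O}_{yz}(f))=D^{\EuScript O}_x(f)^{\mu}=D^{\EuScript O}_{xz}(f)$. The only real obstacle is the bookkeeping of domains and codomains of the renamings, and in particular being certain that $\sigma$ fixes $x$ (which is why the step via \texttt{(DEQ)} is as clean as it is); everything else is mechanical.
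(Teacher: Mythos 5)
Your proof is correct and follows essentially the same route as the paper's: both use \texttt{(DEQ)} to commute $D_x$ past the $y$-to-$z$ renaming (exploiting that this renaming fixes $x$), then \texttt{(DEX)} to absorb the nested $D_y$, and finish by checking an identity of renamings --- the paper's $\sigma_2=\tau_1\circ\tau_2$ is exactly your $\omega\circ\sigma\circ\tau=\mu$ after composing with the transposition $\omega$. One small slip in your final verification: $y$ does belong to the domain $X\backslash\{x\}\cup\{z\}$ and must also be traced ($y\mapsto x\mapsto x\mapsto y$, agreeing with $\mu(y)=y$), so the claim that there is nothing else in the domain is not quite right, though the computation still closes.
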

\begin{proof}
Since $D^{\EuScript O}_{xz}(f)=D_x(f)^{\sigma}=(D_x(f)^{\sigma_1})^{\sigma_2},$ where  $\sigma:X\backslash\{x\}\cup\{z\}\rightarrow X$ renames $x$ to $z$, $\sigma_1:X\rightarrow X$ renames $x$ to $y$ and $y$ to $x$, and $\sigma_2:X\backslash\{x\}\cup\{z\}\rightarrow X$ renames $y$ to $z$ and $x$ to $y$, by \texttt{(DEX)} we have  $$D^{\EuScript O}_{xz}(f)=(D^{\EuScript O}_x(D^{\EuScript O}_y(f)))^{\sigma_2}.$$

\indent For the left-hand side we have $D^{\EuScript O}_{xy}(D^{\EuScript O}_{yz}(f))=(D^{\EuScript O}_x(D^{\EuScript O}_y(f)^{\tau_1}))^{\tau_2},$
where $\tau_1:X\backslash\{y\}\cup\{z\}\rightarrow X$ renames $y$ to $z$ and $\tau_2:X\backslash\{x\}\cup\{z\}\rightarrow X\backslash\{y\}\cup\{z\}$ renames $x$ to $y$.
Therefore, by \texttt{(DEQ)},  $$D^{\EuScript O}_{xy}(D^{\EuScript O}_{yz}(f))=(D^{\EuScript O}_x(D^{\EuScript O}_y(f))^{\tau_1})^{\tau_2}=(D^{\EuScript O}_x(D^{\EuScript O}_y(f)))^{\tau_1\circ\tau_2}. $$ The conclusion follows from the equality $\sigma_2=\tau_1\circ\tau_2$.
\end{proof}
\indent We  make some preparations for the proof that Definition \ref{exoutput}  is equivalent to Definition \ref{entriesonly} (see Convention \ref{deq}). For an exchangeable-output cyclic operad ${\EuScript O}$ and a finite set $X$, we  introduce an equivalence relation $\approx$ on the set $\sum_{x\in X}{\EuScript O}(X\backslash\{x\})$ of (ordered) pairs $(x,f)$, where $x\in X$ and $f\in {\EuScript O}(X\backslash\{x\})$: $\approx$ is the reflexive closure of the familly of equalities \vspace{-0.1cm}\begin{equation}\label{d}(x,f)\approx (y,D_{yx}(f)),\vspace{-0.1cm}\end{equation} where $y\in X\backslash\{x\}$ is arbitrary.
\begin{rem}
By {\texttt{(DIN)'}} and {\texttt{(DCO)}} it follows that, for each $x\in X$, an equivalence class  $$[(x,f)]_{\approx}\in \sum_{x\in X}{\EuScript O}(X\backslash\{x\})/_{\approx}$$ has a unique representative of the form $(x,\rule{0.45em}{0.4pt}\,)$, i.e. if $(x,f)\approx(x,g)$, then $f=g$.
\end{rem}
In the next remark we exhibit a property of $\approx$ that we shall also need for the proof of the equivalence.
\begin{rem}\label{cong}
By  { \texttt{(DC1')}} and {\texttt{(DCO)}}, we have that  $(y, D_{yx}(f)\circ_x g)\approx (z,D_{zx}(f)\circ_x g). $  
\end{rem} 
\begin{thm}\label{2}
Definition \ref{exoutput} (exchangeable-output, componential) and   Definition \ref{entriesonly} (entries-only, componential)  are equivalent definitions of cyclic-operads.
\end{thm}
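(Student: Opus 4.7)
I would prove both directions, then verify that the two transitions are mutually inverse up to isomorphism, in the sense of Convention \ref{deq}.

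\textbf{From exchangeable-output to entries-only.} Given a componential exchangeable-output cyclic operad ${\EuScript O}$, the natural thing to do is exploit the relation $\approx$ already introduced before the statement: I would set
\[ {\EuScript C}(X)=\Big(\sum_{x\in X}{\EuScript O}(X\backslash\{x\})\Big)\big/\!\approx\,, \]
so that an entries-only operation is an equivalence class of ``operations with chosen output''. Functoriality of ${\EuScript C}$ is inherited from ${\EuScript O}$ (well-definedness on $\approx$-classes uses \texttt{(DEQ)} / \texttt{(DEQ)'}). I would take ${\it id}^{\EuScript C}_{x,y}=[(x,{\it id}_y)]_\approx$, well-defined because $D_{yx}({\it id}_y)={\it id}_x$ by \texttt{(DID)'}, and \texttt{(UP)} follows from the naturality of units in ${\EuScript O}$. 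For composition, given $[(u,f)]\in{\EuScript C}(X)$ and $[(v,g)]\in{\EuScript C}(Y)$ with $x\in X$ and $y\in Y$, I would first use the canonical representatives $(x,D_{xu}(f))$ of $[(u,f)]$ (which represents $f$ ``as output at $x$''), then pick any auxiliary $z\in Y\backslash\{y\}$ and set
\[ [(u,f)]\,{_x\!\circ_y}\,[(v,g)]=\bigl[\bigl(z,\,D_{zv}(g)\circ_y D_{xu}(f)\bigr)\bigr]_\approx. \]
The independence of this class from the choice of $z$ is exactly the content of Remark \ref{cong}. Well-definedness on the left representative $u$ follows from \texttt{(DCO)} plus \texttt{(DIN)'}; well-definedness on $v$ follows from \texttt{(DC1)'} and \texttt{(DCO)}. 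Parallel associativity \texttt{(A1)} of ${\EuScript C}$ would then be derived from \texttt{[A1]} of ${\EuScript O}$ combined with \texttt{(DEX)} and \texttt{(DC1)}. The remaining axioms \texttt{(EQ)} and \texttt{(U1)} reduce to \texttt{[EQ]}/\texttt{(DEQ)} and \texttt{[U1]}/\texttt{(DID)'}.

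\textbf{From entries-only to exchangeable-output.} Given ${\EuScript C}$, I would define ${\EuScript O}(X)={\EuScript C}(X\cup\{\ast_X\})$, with the element $\ast_X$ playing the role of a formal output. The operadic composition $f\circ_x g$ (for $f\in{\EuScript O}(X)$, $g\in{\EuScript O}(Y)$) is defined as the entries-only composition $f\,{_x\!\circ_{\ast_Y}}\,g$, followed by renaming $\ast_X$ to $\ast_{X\backslash\{x\}\cup Y}$; the unit is ${\it id}_x={\it id}^{\EuScript C}_{x,\ast_{\{x\}}}$; and the exchange action $D^{\EuScript O}_x(f)$ is $f^\sigma$ where $\sigma$ swaps $x$ and $\ast_X$. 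Verifying axiom \texttt{[A1]} uses \texttt{(A1)} of ${\EuScript C}$, \texttt{[A2]} uses \texttt{(A2)} together with \texttt{(EQ)}, and \texttt{[EQ]}, \texttt{[U1]}, \texttt{[U2]}, \texttt{[UP]} all follow directly from their entries-only counterparts (using \texttt{(CO)} and Lemma \ref{j}). The $D$-axioms become straightforward computations with renamings: \texttt{(DID)} from \texttt{(UP)}, \texttt{(DIN)} because swap is an involution, \texttt{(DEQ)} from functoriality of ${\EuScript C}$, \texttt{(DEX)} from the fact that swapping $\ast_X\leftrightarrow x$ then $\ast_X\leftrightarrow y$ equals first swapping $x\leftrightarrow y$ then $\ast_X\leftrightarrow x$, and \texttt{(DC1)}--\texttt{(DC2)} from \texttt{(A2)}/\texttt{(CO)} respectively.

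\textbf{Round trips.} For ${\EuScript O}\to{\EuScript C}\to{\EuScript O}'$, an element of ${\EuScript O}'(X)$ is a class $[(x',f)]_\approx$ with $x'\in X\cup\{\ast_X\}$ and $f\in{\EuScript O}(X\cup\{\ast_X\}\backslash\{x'\})$; taking the canonical representative at $x'=\ast_X$ gives a bijection ${\EuScript O}'(X)\simeq {\EuScript O}(X)$ natural in $X$. For ${\EuScript C}\to{\EuScript O}\to{\EuScript C}'$, an element of ${\EuScript C}'(X)$ is a class $[(x,f)]_\approx$ with $f\in{\EuScript C}((X\backslash\{x\})\cup\{\ast_{X\backslash\{x\}}\})$; renaming $\ast_{X\backslash\{x\}}$ back to $x$ produces an element of ${\EuScript C}(X)$, and this map is a bijection by \texttt{(EQ)} of ${\EuScript C}$. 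This yields the equivalence (not strong, as the round trips produce isomorphic rather than identical species).

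\textbf{Expected difficulty.} The routine parts are the $D$-axioms in the entries-only-to-exchangeable-output direction and the equivariance/unit verifications throughout. The main obstacle will be proving well-definedness of the entries-only composition on $\approx$-classes in the first direction, and then showing that this definition actually satisfies parallel associativity \texttt{(A1)}: the natural proof attempt produces several representatives whose equality requires a careful orchestration of \texttt{[A1]}, \texttt{(DEX)} and \texttt{(DC1)}--\texttt{(DC2)}, and this bookkeeping of outputs vs.~inputs (plus the freedom to pick auxiliary $z$'s) is where most of the work lives.
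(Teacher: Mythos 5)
Your proposal follows essentially the same route as the paper: the same species constructions ($\partial{\EuScript C}$ in one direction, $\sum_{x\in X}{\EuScript O}(X\backslash\{x\})/_{\approx}$ in the other), the same units, the same $D$-action given by swapping $\ast_X$ with $x$, and the same round-trip identifications via canonical representatives, and you correctly locate the hard part in the well-definedness and parallel associativity of the composition on $\approx$-classes. Two small points. First, your formula $[(u,f)]\,{_x\circ_y}\,[(v,g)]=[(z,D_{zv}(g)\circ_y D_{xu}(f))]_{\approx}$ is the mirror image of the paper's (which plugs $g$ into $f$ and, when $u\neq x$, keeps the root at $u$ rather than re-rooting into the other factor); this is a legitimate variant, but note that proving \texttt{(A1)} will still force the case analysis on whether the relevant complement is empty, with \texttt{(DC2)} and \texttt{(DID)} needed precisely when the root must migrate between factors, so the uniform re-rooting does not save you that work. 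Second, under the convention $D_{xx}(f)=D_x(f)$ (an involution, not the identity), the shorthand $D_{xu}(f)$ for ``the canonical representative of $[(u,f)]$ rooted at $x$'' is wrong in the degenerate case $u=x$: the canonical representative is then $(x,f)$ itself, whereas $D_{xx}(f)$ applies a spurious $D_x$; the same applies to $D_{zv}(g)$ when $v=z$. You should either state the composition casewise, as the paper does, or explicitly stipulate that $D_{xu}$ means the identity when $u=x$.
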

\begin{proof} The proof steps are the same as  in the proof of Theorem 3.14, except that we now show that the transition from one structure to the other one, and back, leads to an isomorphic structure.\\[0.15cm]
\textsc{[entries-only $\Rightarrow$ exchangeable-output]} Let ${\EuScript C}:{\bf Bij}^{op}\rightarrow {\bf Set}$  be an entries-only cyclic operad. For a finite set $X$ and a bijection $\sigma:Y\rightarrow X$, the species ${\EuScript O}_{\EuScript C}:{\bf Bij}^{op}\rightarrow {\bf Set}$,  underlying the corresponding exchangeable-output cyclic operad, is defined as $${\EuScript O}_{\EuScript C}(X)=\partial {\EuScript C}(X) \mbox{ \enspace \enspace and  \enspace\enspace} {\EuScript O}_{\EuScript C}(\sigma)=\partial {\EuScript C}(\sigma)={\EuScript C}(\sigma^{+}).$$ 
For $f\in {\EuScript O}_{\EuScript C}(X)$ and $g\in {\EuScript O}_{\EuScript C}(Y)$, the partial composition operation $\circ_{x}:{\EuScript O}_{\EuScript C}(X)\times {\EuScript O}_{\EuScript C}(Y)\rightarrow {\EuScript O}_{\EuScript C}(X\backslash\{x\}\cup Y)$ is defined by setting  \begin{equation}\label{aa} f\circ_{x} g= f^{\sigma}\, {{_{x}\circ_{\ast_{Y}}}}\,\, g, \end{equation} where $\sigma:X\cup \{\ast_{X\backslash\{x\}\cup Y}\}\rightarrow X\cup \{\ast_X\}$ renames $\ast_X$ to $\ast_{X\backslash\{x\}\cup Y}$. The distinguished element ${\it id}_x\in {\EuScript O}_{\EuScript C}(\{x\})$ is defined as ${\it id}_{x,\ast_{\{x\}}}$. Finally, for $f\in {\EuScript O}_{\EuScript C}(X)$ and $x\in X\cup\{\ast_{X}\}$, the action $D^{{\EuScript O}_{\EuScript C}}_{x}:{\EuScript O}_{\EuScript C}(X)\rightarrow {\EuScript O}_{\EuScript C}(X)$ is defined by setting  $$D^{{\EuScript O}_{\EuScript C}}_{x}(f)={\EuScript C}(\sigma)(f), $$ where $\sigma:X\cup \{\ast_{X}\}\rightarrow X\cup \{\ast_X\}$ exchanges $x$ and $\ast_{X}$. We verify the axioms.\\[0.1cm]
\indent \texttt{[A1]} Let $f$ and $g$ be as above and let $y\in X$  and $h\in {\EuScript O}_{\EuScript C}(Z)$. The sequence of equalities
{\small $$\begin{array}{rrrrrrrrrrrrrrclrr}
&&&&&&&&&&&&&(f \circ_{x} g) \circ_y h &=& (f^{\sigma}\, {{_{x}\circ_{\ast_{Y}}}}\,\, g)^{\tau} \, {{_{y}\circ_{\ast_{Z}}}}\,\, h&& \\[0.1cm]
&&&&&&&&&&&& && = & ((f^{\sigma})^{\tau_1}\, {{_{x}\circ_{\ast_{Y}}}}\,\, g^{\tau_2}) \, {{_{y}\circ_{\ast_{Z}}}}\,\, h &&  \enspace\enspace\enspace \enspace\enspace\enspace \enspace\enspace \enspace\enspace\enspace \enspace\enspace \enspace{{ \mbox{ \small\texttt{(EQ)}}}}\\[0.1cm]
&&&&&&&&&&&&& & = & ((f^{\sigma})^{\tau_1}\, {{_{y}\circ_{\ast_{Z}}}}\,\, h)  \, {{_{x}\circ_{\ast_{Y}}}}\,\, g && {{ \mbox{ \small\texttt{(A1)}}}}\\[0.1cm]
&&&&&&&&&&&&& & = & (f^{\kappa} \, {{_{y}\circ_{\ast_{Z}}}}\,\,   h) \, {{_{x}\circ_{\ast_{Y}}}}\,\, g & &  {{ \mbox{ \small\texttt{(EQ)}}}} \\[0.1cm]
&&&&&&&&&&&&& & = & (f \circ_{y} h) \circ_x g  &&
\end{array}$$}
where\\[-0.55cm]
{\small\begin{itemize}
\item $\sigma:X\cup\{\ast_{X\backslash\{x\}\cup Y}\}\rightarrow X\cup \{\ast_X\}$ renames $\ast_X$ to $\ast_{X\backslash\{x\}\cup Y}$, \\[-0.55cm]
\item $\tau:X\backslash\{x\}\cup Y\cup \{\ast_{X\backslash\{x,y\}\cup Y\cup Z}\} \rightarrow X\backslash\{x\}\cup Y\cup \{\ast_{X\backslash\{x\}\cup Y}\}$ renames $\ast_{X\backslash\{x\}\cup Y}$ to $\ast_{X\backslash\{x,y\}\cup Y\cup Z}$,\\[-0.55cm]
\item $\tau_1=\tau|^{X\backslash\{x\}\cup \{\ast_{X\backslash\{x\}\cup Y}\}}\cup id_{\{x\}}$, \\[-0.55cm]
\item $\tau_2=\tau|^{Y}={\it id}_{Y\cup \{\ast_{Y}\}}$, and\\[-0.55cm]
\item $\kappa: X\cup \{\ast_{X\backslash\{x\}\cup Z}\}\rightarrow X\cup \{\ast_{X}\}$ renames $\ast_{X}$ to $\ast_{X\backslash\{x\}\cup Z}$. 
\end{itemize}}
\noindent verifies  \texttt{[A1]} for ${\EuScript O}_{\EuScript C}$. The axiom \texttt{[A2]} follows similarily.\\[0.1cm]
\indent \texttt{[EQ]} For arbitrary bijections $\sigma_1:X'\rightarrow X$ and $\sigma_2:Y'\rightarrow Y$ we have {\small
$$\begin{array}{rrrrrrrrrrclrl}
&&&&&&&&& f^{\sigma_1^{+}}\circ_{\sigma^{-1}_1(x)} g^{\sigma^{+}_2} & = & (f^{\sigma_1^{+}})^{\tau }\, {{_{{{\sigma}_1^{+}}^{-1}(x)}\circ_{\ast_{Y'}}}}\,\,  g^{\sigma^{+}_2} & & \\[0.1cm]
&&&&&&&&&& = & (f^{\sigma_1^{+}})^{\tau } \, {{_{\tau^{-1}({\sigma_1^{+}}^{-1}(x))}\circ_{\ast_{Y'}}}}\,\,  g^{\sigma^{+}_2}  && \\[0.1cm]
&&&&&&&&&& = & (f\, {{_{x}\circ_{\ast_{Y}}}} g)^{\kappa}  && \enspace \enspace\enspace \enspace\enspace \enspace\enspace \enspace\enspace \enspace\enspace \enspace\enspace \enspace\,\,\,\,{{ \mbox{ \small\texttt{(EQ)}}}}\\[0.1cm]
&&&&&&&&&& = & (f^{\nu}\, {{_{x}\circ_{\ast_{Y}}}}\,\, g)^{\sigma^{+}} && \enspace\enspace\enspace\enspace\enspace\enspace\enspace\enspace\enspace\,\enspace\enspace\enspace\enspace \,\,\,\enspace{{ \mbox{ \small\texttt{(EQ)'}}}} \\[0.1cm]
&&&&&&&&&& = & (f\circ_{x} g)^{\sigma^{+}}&&  
\end{array}$$}
where\\[-0.55cm]
{\small\begin{itemize}
\item $\tau: X'\cup \{\ast_{X'\backslash\{\sigma_1^{-1}(x)\}\cup Y'}\} \rightarrow X'\cup \{\ast_{X'}\}$ renames $\ast_{X'}$ to ${\ast_{X'\backslash\{\sigma_1^{-1}(x)\}\cup Y'}}$,\\[-0.55cm]
\item $\nu: X\cup \{\ast_{X\backslash\{x\}\cup Y}\}\rightarrow X\cup \{\ast_X\}$ renames $\ast_X$ to $\ast_{X\backslash\{x\}\cup Y}$, \\[-0.55cm]
\item $\kappa=(\sigma_1^{+}\circ\tau)|^{X\backslash\{x\}\cup \{\ast_{X}\}}\cup \sigma_2^{+}|^{Y}$, and\\[-0.55cm]
\item $\sigma=\sigma_1|^{X\backslash\{x\}}\cup \sigma_2$. 
\end{itemize}}
\noindent Observe that $\kappa=(\nu|^{X\backslash\{x\}\cup\{\ast_{X}\}}\cup {\it id}_Y)\circ\sigma^{+},$  which justifies the application of \texttt{(EQ)'} to get the equality $(f\, {{_{x}\circ_{\ast_{Y}}}} g)^{\kappa} =(f^{\nu}\, {{_{x}\circ_{\ast_{Y}}}}\,\, g)^{\sigma^{+}} $. \\[0.1cm]
\indent \texttt{[U1]} By the axioms \texttt{(UP)} and \texttt{(U1)} for ${\EuScript C}$, for $f\in {\EuScript O}_{\EuScript C}(X)$ we have  $${\it id}_{y}\circ_y f  ={\it id}^{\sigma}_{y,\ast_{\{y\}}}\, {{_{y}\circ_{\ast_{X}}}}\,\, f={\it id}_{y,\ast_{X}}\,\, {{_{y}\circ_{\ast_{X}}}}\,\, f=f, $$
where $\sigma:\{y,\ast_{X}\}\rightarrow \{y,\ast_{\{y\}}\}$ renames $\ast_{\{y\}}$ to $\ast_{X}$.
 Analogously, the axioms \texttt{[U2]} and \texttt{[UP]}  for ${\EuScript O}_{\EuScript C}$ follow thanks to the corresponding laws of ${\EuScript C}$ (see Lemma \ref{j} and the axiom \texttt{(UP)}). \\[0.1cm]
\indent Concerning the axioms of the actions $D^{{\EuScript O}_{\EuScript C}}_{x}(f)$, \texttt{(DID)}, \texttt{(DIN)}, \texttt{(DEQ)}, and \texttt{(DEX)} follow easily by   functoriality of ${\EuScript C}$. The axioms \texttt{(DC1)} and \texttt{(DC2)}   additionally require the axiom \texttt{(EQ)} of ${\EuScript C}$. \\[0.1cm]
\indent \texttt{(DC1)} Let  $f\in {\EuScript O}_{\EuScript C}(X), g\in {\EuScript O}_{\EuScript C}(Y)$, $x\in X$ and $y\in X\backslash\{x\}$. We  need the following bijections: \\[-0.55cm]
{\small\begin{itemize}
\item $\sigma:X\backslash\{x\}\cup Y\cup \{\ast_{X\backslash\{x\}\cup Y}\}\rightarrow X\backslash\{x\}\cup Y\cup \{\ast_{X\backslash\{x\}\cup Y}\}$, which exchanges $y$ and $\ast_{X\backslash\{x\}\cup Y}$, \\[-0.55cm]
\item $\nu:X\cup \{\ast_{X\backslash\{x\}\cup Y}\}\rightarrow X\cup \{\ast_X\}$, which renames $\ast_X$ to $\ast_{X\backslash\{x\}\cup Y}$,\\[-0.55cm]
\item $\sigma':X\cup \{\ast_{X\backslash\{x\}\cup Y }\}\rightarrow X\cup \{\ast_{X\backslash\{x\}\cup Y}\}$, which exchanges $y$ and $\ast_{X\backslash\{x\}\cup Y}$, and\\[-0.55cm]
\item $\tau : X\cup \{\ast_{X}\}\rightarrow X\cup \{\ast_X\}$, which exchanges $y$ and $\ast_{X}$.\\[-0.55cm]
\end{itemize}}
\noindent Observe that $\tau\circ\nu=\nu\circ\sigma'$ and $\sigma=\sigma'|^{X\backslash\{x\}\cup \{\ast_{X\backslash\{x\}\cup Y}\}}\cup{\it id}_{Y}.$
We now have  $$ D^{\EuScript O}_{y}(f\circ_{x}g)=(f^{\nu} \, {{_{x}\circ_{\ast_{Y}}}}\,\, g)^{\sigma}= (f^{\nu})^{\sigma'}\, {{_{x}\circ_{\ast_{Y}}}}\,\,  g = (f^{\tau})^{\nu} \, {{_{x}\circ_{\ast_{Y}}}}\,\, g =D^{\EuScript O}_{yu}(f)\circ_{x}g\,.  $$

\indent  \texttt{(DC2)} Let $f, g$ and $x$ be like above and let $y\in Y$ instead. Let $\sigma_1$ and $\sigma_2$ be as in Definition \ref{exoutput}. 
We shall need the following bijections: \\[-0.55cm]
{\small\begin{itemize}
\item $\tau_1:Y\backslash\{y\}\cup \{v,\ast_{Y\backslash\{y\}\cup \{v\}}\}\rightarrow Y\cup \{\ast_Y\}$ that renames $y$ to $\ast_{Y\backslash\{y\}\cup \{v\}}$ and $\ast_Y$ to $v$, \\[-0.55cm]
\item $\tau_2:X\backslash\{x\}\cup \{y,\ast_{X\backslash\{x\}\cup \{y\}}\}\rightarrow X\cup\{\ast_X\}$ that renames $x$ to $\ast_{X\backslash\{x\}\cup \{y\}}$ and $\ast_X$ to $y$, \\[-0.55cm]
\item $\tau:Y\backslash\{y\}\cup \{v,\ast_{X\backslash\{x\}\cup Y}\}\rightarrow Y\backslash\{y\}\cup \{v,\ast_{Y\backslash\{y\}\cup \{v\}}\}$ that renames $\ast_{Y\backslash\{y\}\cup \{v\}}$ to $\ast_{X\backslash\{x\}\cup Y}$, \\[-0.55cm]
\item $\kappa_1:X\backslash\{x\}\cup \{y,\ast_{X\backslash\{x\}\cup \{y\}}\} \rightarrow X\cup \{\ast_{X\backslash\{x\}\cup Y}\}$ that renames $x$ to $\ast_{X\backslash\{x\}\cup \{y\}}$ and $\ast_{X\backslash\{x\}\cup Y}$ to $y$, \\[-0.55cm]
\item $\kappa_2:Y\backslash\{y\}\cup \{v,\ast_{X\backslash\{x\}\cup Y}\}\rightarrow Y\cup \{\ast_Y\}$ that renames $y$ to $\ast_{X\backslash\{x\}\cup Y}$ and $\ast_Y$ to $v$, \\[-0.55cm]
\item $\sigma:X\backslash\{x\}\cup Y\cup\{\ast_{X\backslash\{x\}\cup Y}\}\rightarrow X\backslash\{x\}\cup Y\cup\{\ast_{X\backslash\{x\}\cup  Y}\}$ that exchanges $y$ and $\ast_{X\backslash\{x\}\cup Y}$, and \\[-0.75cm]
\item $\nu:X\cup \{\ast_{X\backslash\{x\}\cup Y}\}\rightarrow X\cup \{\ast_X\}$ that renames $\ast_X$ to $\ast_{X\backslash\{x\}\cup Y}$. \\[-0.55cm]
\end{itemize}} 
\noindent Observe that $\tau_2=\nu\circ\kappa_1$ and $\kappa_2=\tau_1\circ\tau ,$
and  $\sigma=\kappa_1|^{X\backslash\{x\}\cup \{\ast_{X\backslash\{x\}\cup Y}\}}\cup \kappa_2|^{Y}.$ This gives us
{\small $$\begin{array}{rrrrrrrrrrrclr}
&&&&&&&&&&D^{\EuScript O}_{y}(g)^{\sigma_1}\circ_{v}D^{\EuScript O}_{x}(f)^{\sigma_2}&=& g^{\tau_1}\circ_v f^{\tau_2}&\\[0.1cm]
&&&&&&&&&&&=& (g^{\tau_1})^{\tau} \, {{_{v}\circ_{\ast_{X\backslash\{x\}\cup\{y\}}}}}\,\, f^{\tau_2}&\\[0.1cm]
&&&&&&&&&&&=& f^{\tau_2} \, {{_{\ast_{X\backslash\{x\}\cup \{y\}}}\circ_{v}}}\,\, (g^{\tau_1})^{\tau}&\enspace\enspace\enspace\enspace\enspace\enspace\enspace\enspace\enspace\enspace\enspace\enspace\enspace\enspace\enspace\enspace\small{\texttt{(CO)}}\\[0.1cm]
&&&&&&&&&&&=& (f^{\nu})^{\kappa_1} \, {{_{\ast_{X\backslash\{x\}\cup \{y\}}}\circ_{v}}}\,\, g^{\kappa_2}& \\[0.1cm]
&&&&&&&&&&&=& (f^{\nu}  \, {{_{x}\circ_{\ast_{Y}}}}\,\, g  )^{\sigma}&\small{\texttt{(EQ)}}\\[0.1cm]
&&&&&&&&&&&=& D^{\EuScript O}_{y}(f\circ_{x}g).&
\end{array}$$}

\noindent \textsc{[exchangeable-output $\Rightarrow$ entries-only]} Suppose that ${\EuScript O}:{\bf Bij}^{\it op}\rightarrow {\bf Set}$ is an exchangeable-output cyclic operad. The species ${\EuScript C}_{\EuScript O}:{\bf Bij}^{\it op}\rightarrow {\bf Set}$, underlying the cyclic operad in the entries-only fashion, is defined as  $${\EuScript C}_{\EuScript O}(X)={\sum_{x\in X}{\EuScript O}(X\backslash\{x\})}/_{\approx}. $$ Accordingly, for $[(x,f)]_{\approx}\in {\EuScript C}_{\EuScript O}(X)$ and a bijection $\sigma:Y\rightarrow X$, $${\EuScript C}_{\EuScript O}(\sigma)([(x,f)]_{\approx})=[(\sigma^{-1}(x),{\EuScript O}(\sigma|^{X\backslash\{x\}})(f))]_{\approx}.$$
\indent For $[(u,f)]_{\approx}\in{\EuScript C}_{\EuScript O}(X)$ and $[(v,g)]_{\approx}\in{\EuScript C}_{\EuScript O}(Y)$,  the partial composition operation ${{_{x}\circ_{y}}}:{\EuScript C}_{\EuScript O}(X)\times {\EuScript C}_{\EuScript O}(Y)\rightarrow {\EuScript C}_{\EuScript O}(X\backslash\{x\}\cup Y\backslash\{y\})$ is defined as follows: 
\[   
[(u,f)]_{\approx} {{_{x}\circ_{y}}} [(v,g)]_{\approx}  = 
     \begin{cases}
       [(z,D_{zx}(f)\circ_x g)]_{\approx}, &\quad\text{if } u=x \text{ and  } v=y, \\
       [(z, D_{zx}(f)\circ_x D_{yv}(g))]_{\approx}, &\quad\text{if } u=x \text{ and  } v\neq y, \\
       [(u,f\circ_{x} g)]_{\approx}, &\quad\text{if } u\neq x \text{ and  } v=y,\\
       [(u,f\circ_x D_{yv}(g))]_{\approx}, &\quad\text{if } u\neq x \text{ and  } v\neq y, \ 
     \end{cases} 
\]
where $z\in X\backslash\{x\}$ is arbitrary.\\ 
\indent For a two-element set, say $\{x,y\}$, the distinguished element ${\it id}_{x,y}\in {\EuScript C}_{\EuScript O}(\{x,y\})$ will be the equivalence class   $[(x,{\it id}_y)]_{\approx}$ (notice that, by \texttt{(DID)},  $(x,{\it id}_y)\approx (y,{\it id}_x)$).\\[0.1cm]
\indent Verifying that ${{_{x}\circ_{y}}}$ is well-defined requires checking that different  representatives of the classes that are to be composed lead to the same resulting class. Suppose, say, that  $(u,f)$ and $(v,g)$ are such that $u=x$ and  $v\neq y$ and let $s\in X\backslash\{x\}$ and $w\in Y\backslash\{v\}$ be arbitrary. Then, if, say, $w=y$, we have  
$$
[(s,D_{sx}(f))]_{\approx} \,\,{{_{x}\circ_{y}}}\,\, [(y,D_{yv}(g))]_{\approx} = [(s,D_{sx}(f)\circ_x D_{yv}(g))]_{\approx},\vspace{-0.1cm}$$ and
$(z, D_{zx}(f)\circ_x D_{yv}(g))\approx (s,D_{sx}(f)\circ_x  D_{yv}(g))$ by  Remark \ref{cong}. If $w\neq y$, then 
$$
[(s,D_{sx}(f))]_{\approx} \,\,{{_{x}\circ_{y}}}\,\, [(w,D_{wv}(g))]_{\approx} = [(s,D_{sx}(f)\circ_x D_{yw}(D_{wv}(g)))]_{\approx},  $$ and, by \texttt{(DCO)} and Remark \ref{cong}, we have   $$(s,D_{sx}(f)\circ_x D_{yw}(D_{wv}(g)))=(s,D_{sx}(f)\circ_x D_{yv}(g))\approx (z,D_{zx}(f)\circ_x D_{yv}(g)). $$ From  Remark \ref{cong} it also follows that   different choices of $z\in X\backslash\{x\}$ from the first two cases in the definition of $[(u,f)]_{\approx} {{_{x}\circ_{y}}} [(v,g)]_{\approx}$ lead to the same result. 
In the remaining of the proof, we shall assume that  $(x,f)$ and $(v,g)$  satisfy the conditions $u\neq x \text{ and } v=y$.   We check the axioms.\\[0.1cm]
\indent \texttt{(A1)} Let $[(u,f)]_{\approx}\in {\EuScript C}_{\EuScript O}(X)$, $[(y,g)]_{\approx}\in{\EuScript C}_{\EuScript O}(Y),$  $[(w,h)]_{\approx}\in{\EuScript C}_{\EuScript O}(Z)$, $x\in X$, $y\in Y$ and $w\in Z$. We prove the instance of associativity that requires the use of  \texttt{(DC2)} and \texttt{(DID)}, namely  $$([(u,f)]_{\approx} \,\,{_x\circ_y}\,\, [(y,g)]_{\approx}) \,\,{_u\circ_w}\,\, [(w,h)]_{\approx}=([(u,f)]_{\approx} \,\,{_u\circ_w}\,\, [(w,h)]_{\approx})\,\,{_x\circ_y}\,\, [(y,g)]_{\approx} . $$ Since ${\EuScript O}(\emptyset)=\emptyset$ and $g\in {\EuScript O}(Y\backslash\{y\})$ (resp. $h\in{\EuScript O}(Z\backslash\{w\}))$, we have that $Y\backslash\{y\}\neq\emptyset$ (resp. $Z\backslash\{w\}\neq\emptyset$). Suppose that $X\backslash\{x,u\}=\emptyset$.
For the expression on the left side of the equality we then have 
 $$ 
([(u,f)]_{\approx} \,\,{_x\circ_y}\,\, [(y,g)]_{\approx}) \,\,{_u\circ_w}\,\, [(w,h)]_{\approx}= [(z,D_{zu}(f\circ_x g)\circ_u h)]_{\approx},
 $$
where we chose $z\in Y\backslash\{y\}$. On the other hand, we have
 $$ 
([(u,f)]_{\approx} \,\,{_u\circ_w}\,\, [(w,h)]_{\approx})\,\,{_x\circ_y}\,\, [(y,g)]_{\approx}=[(v,D_{vx}(D_{xu}(f)\circ_u h)\circ_x g)]_{\approx} ,
 $$
where we chose $v\in Z\backslash\{w\}$. The associativity follows if we prove that $$D_{vz}(D_{zu}(f\circ_x g)\circ_u h)=D_{vx}(D_{xu}(f)\circ_u h)\circ_x g. $$ For this we use \texttt{(DC2)}, followed by \texttt{(DID)}, on both sides of the equality. We get  $$D_v(h)\circ_v D_{uz}(D_{zu}(f\circ_x g))=D_v(h)\circ_v (f\circ_x g) $$ on the left side and  $$(D_v(h)\circ_v D_{ux}(D_{xu}(f)))\circ_x g=(D_v(h)\circ_v f)\circ_x g $$ on the right side, and the conclusion follows by the axiom \texttt{[A1]} for $\EuScript O$.  If $X\backslash\{x,u\}\neq\emptyset$ and  $z\in X\backslash\{x,u\}$, the associativity   follows more directly by \texttt{(DC1)},  by choosing $v=z$.  \\[0.1cm]
\indent \texttt{(EQ)} Let $[(u,f)]_{\approx}\in {\EuScript C}_{\EuScript O}(X)$, $[(y,g)]_{\approx}\in{\EuScript C}_{\EuScript O}(Y),$ and let $\sigma_1:X'\rightarrow X$ and $\sigma_2:Y'\rightarrow Y$ be bijections. Let $\sigma^{-1}_1(x)=x'$, $\sigma_1^{-1}(u)=u'$ and $\sigma_2^{-1}(y)=y'$. 
We prove that $${\EuScript C}_{\EuScript O}(\sigma_1)([(u,f)]_{\approx})\,\,{_{{ x'}}\circ_{y'}}\,\, {\EuScript C}_{\EuScript O}(\sigma_2)( [(y,g)]_{\approx})={\EuScript C}_{\EuScript O}(\sigma)([(u,f)]_{\approx} {_x\circ_y} \,\, [(y,g)]_{\approx}),$$ where $\sigma=\sigma_1|^{X\backslash\{x\}}\cup \sigma_2|^{Y\backslash\{y\}}$. Let $\sigma'=\sigma|^{X\backslash\{x,u\}\cup Y\backslash\{y\}}$. We have 
{\small $$\begin{array}{llllllllrlrr}
&&&&&&&([(u,f)]_{\approx} {_x\circ_y} \,\, [(y,g)]_{\approx})^{\sigma}&=&[(u,f\circ_x g)]_{\approx}^{\sigma}&\\[0.15cm]
&&&&&&&&=&[(u',(f\circ_x g)^{\sigma'})]_{\approx}&\\[0.15cm]
&&&&&&&&=&[(u',f^{\tau_1}\circ_{x'}  g^{\tau_2})]_{\approx}& \enspace\enspace\enspace\enspace\enspace{\small{\texttt{[EQ]}}}\\[0.15cm]
&&&&&&&&=& [(u',f^{\sigma_1|^{X\backslash\{u\}}}\circ_{x'} g^{\sigma_2|^{Y\backslash\{y\}}})]&\\[0.15cm]
&&&&&&&&=&[(u', f^{\sigma_1|^{X\backslash\{u\}}})]_{\approx} \,\,{_{ {x'}}\circ_{y'}}\,\,  [(y',  g^{\sigma_2|^{Y\backslash\{y\}}})]_{\approx}&\\[0.15cm]
&&&&&&&&=&[(u,f)]_{\approx}^{\sigma_1}\,\,{_{ {x'}}\circ_{y'}}\,\,  [(y,g)]^{\sigma_2}_{\approx},&
\end{array}$$}

\noindent where  $\tau_1=\sigma|^{X\backslash\{u\}}\cup \sigma_1|^{x}$ and $\tau_2=\sigma|^{Y\backslash\{y\}}=\sigma_2|^{Y\backslash\{y\}}$. \\[0.1cm]
\indent \texttt{(U1)} For $[(u,f)]_{\approx}\in {\EuScript C}_{\EuScript O}(X)$, by \texttt{[U1]}, we have  {\small $$ [(y,{\it id}_x)]_{\approx}\,\,{_y\circ_u}\,\,[(u,f)]_{\approx}=[(x,D_{xy}({\it id}_x)\circ_{y}f)]_{\approx}=[(x,{\it id}_y\circ_y f)]_{\approx}=[(x, D_{xu}(f))]_{\approx}=[(u,f)]_{\approx}.$$}
\indent \texttt{(UP)} For $[(y,{\it id}_x)]_{\approx}\in {\EuScript C}_{\EuScript O}(\{x,y\})$, and a renaming $\sigma:\{u,v\}\rightarrow \{x,y\}$ of $x$ to $u$ and $y$ to $v$, $${\EuScript C}_{\EuScript O}(\sigma)({\it id}_{x,y})={\EuScript C}_{\EuScript O}(\sigma)([(y,{\it id}_x)]_{\approx})=[(\sigma^{-1}(y),{\EuScript O}(\sigma|^{\{x\}})({\it id}_x))]_{\approx}=[(v,{\it id}_u)]_{\approx}={\it id}_{u,v}.$$ 
\textsc{[the isomorphism of cyclic operads ${\EuScript C}$ and ${\EuScript C}_{{\EuScript O}_{\EuScript C}}$ (and ${\EuScript O}$ and ${\EuScript O}_{{\EuScript C}_{\EuScript O}}$)]} To complete the proof, it remains to show that 
 species ${\EuScript C}$ and ${\EuScript C}_{{\EuScript O}_{\EuScript C}}$ (resp. ${\EuScript O}$ and ${\EuScript O}_{{\EuScript C}_{\EuScript O}}$) are isomorphic, and that the exhibited isomorphism transfers the  partial composition $f\,\,{_{ {x}}\circ_{y}}\,\, g$ (resp. $f\circ_x g$)  to the partial composition of the images of $f$ and $g$ (under the same isomorphism), as well as that it preserves units. Categorically speaking, we are proving the isomorphism of cyclic operads (whose precise definition we give in Section 4).
The  isomorphism-of-species part, which is the same as in the proof of the equivalence of algebraic definitions, will be formally established in  Section 4.2 (Lemma \ref{fff}), as a consequence of the categorical equivalence given by Lamarche in \cite{l15} (which will be recalled in Section 4.1).  The components ${\phi_{\EuScript C}}_X:{\EuScript C}_{{\EuScript O}_{\EuScript C}}(X)\rightarrow {\EuScript C}(X)$ and ${\psi_{\EuScript O}}_X:{\EuScript O}(X)\rightarrow {\EuScript O}_{{\EuScript C}_{\EuScript O}}(X)$ of the isomorphisms ${\phi_{\EuScript C}}:{\EuScript C}_{{\EuScript O}_{\EuScript C}}\rightarrow {\EuScript C}$ and ${\psi_{\EuScript O}}:{\EuScript O}\rightarrow {\EuScript O}_{{\EuScript C}_{\EuScript O}}$, respectively, are defined as follows:\\[-0.55cm]
\begin{itemize}
\item ${\phi_{\EuScript C}}_X([(u,f)]_{\approx})=f^{\kappa}$, where $\kappa: X\rightarrow X\backslash\{u\}\cup \{\ast_{X\backslash\{u\}}\}$ renames $\ast_{X\backslash\{u\}}$ to $u$, and, \\[-0.55cm]
\item ${\psi_{\EuScript O}}_X(f)=[(\ast_X,f)]_{\approx}.$\\[-0.55cm]
\end{itemize}

\indent As for the corresponding partial composition translations, for $[(u,f)]_{\approx}\in {\EuScript C}_{{\EuScript O}_{\EuScript C}}(X)$,  $[(y,g)]_{\approx}\in {\EuScript C}_{{\EuScript O}_{\EuScript C}}(Y)$ and $x\in X\backslash\{u\}$ we have 
  $$[(u,f)]_{\approx}\,\,{_{ {x}}\circ_{y}}\,\, [(y,g)]_{\approx}=[(u,f\circ_x g)]_{\approx}=[(u,f^{\sigma}\,{_{ {x}}\circ_{\ast_{Y\backslash\{y\}}}}\,\, g)]_{\approx}=[(u,f^{\sigma}\,\,{_{ {x}}\circ_{y}}\,\, g^{\tau_2})]_{\approx}, $$
where $\sigma:X\backslash\{u\}\cup \{\ast_{X\backslash\{u,x\}\cup Y\backslash\{y\}}\}\rightarrow X\backslash\{u\}\cup \{\ast_{X\backslash\{u\}}\}$ renames  $\ast_{X\backslash\{u\}}$ to $\ast_{X\backslash\{u,x\}\cup Y\backslash\{y\}}$ and $\tau_2:Y\rightarrow Y\backslash\{y\}\cup \{\ast_{Y\backslash\{y\}}\}$ renames $\ast_{Y\backslash\{y\}}$ to $y$. Notice that for the last equality above we use the axiom \texttt{(EQ)} of both ${\EuScript C}_{{\EuScript O}_{\EuScript C}}$ and ${\EuScript C}$.
 The claim follows since  $${\phi_{\EuScript C}}_X([(u,f^{\sigma}\,\,{_{ {x}}\circ_{y}}\,\, g^{\tau_2})]_{\approx})=(f^{\sigma}\,\,{_{ {x}}\circ_{y}}\,\, g^{\tau_2})^{\kappa}=f^{\tau_1}\,\,{_{ {x}}\circ_{y}}\,\, g^{\tau_2}, $$
where $\kappa :X\backslash \{x\}\cup Y\backslash\{y\}\rightarrow X\backslash\{x,u\}\cup Y\backslash\{y\}\cup \{\ast_{X\backslash\{x,u\}\cup Y\backslash\{y\}}\}$ renames $\ast_{X\backslash\{x,u\}\cup Y\backslash\{y\}}$ to $u$ and $\tau_1:X\rightarrow X\backslash\{u\}\cup\{\ast_{X\backslash\{u\}}\}$ renames $u$ to $\ast_{X\backslash\{u\}}$, wherein the last equality above holds by the axiom \texttt{(EQ)} of ${\EuScript C}$.

For $f\in{\EuScript O}(X)$ and $g\in {\EuScript O}(Y)$, we have
 $$\begin{array}{rcl}
{\psi_{\EuScript O}}_X(f)\circ_x {\psi_{\EuScript O}}_Y(g)&=&[(\ast_X,f)]_{\approx}\circ_x [(\ast_Y,g)]_{\approx}\\[0.1cm]
&=&[(\ast_X,f)]^{\sigma} \,\,{_{ {x}}\circ_{\ast_Y}}\,\, [(\ast_Y,g)]\\[0.1cm]
&=&[(\ast_{X\backslash\{x\}\cup Y},f)]\,\,{_{ {x}}\circ_{\ast_Y}}\,\, [(\ast_Y,g)]_{\approx}\\[0.1cm]
&=&[(\ast_{X\backslash\{x\}\cup Y},f\circ_x g)]_{\approx}\\[0.1cm]
&=&{\psi_{\EuScript O}}_X(f\circ_x g),
\end{array} $$
where $\sigma:X\cup \{\ast_{X\backslash\{x\}\cup Y}\}\rightarrow X\cup \{\ast_X\}$ renames $\ast_{X}$ to $\ast_{X\backslash\{x\}\cup Y}$.\\
\indent For the unit elements,  we have  ${\phi_{\EuScript C}}_{\{x,y\}}([(x,{\it id}_{y,\ast_{\{y\}}})]_{\approx})={\it id}_{y,\ast_{\{y\}}}^{\kappa}={\it id}_{x,y},$ where $\kappa:\{x,y\}\rightarrow \{y,\ast_{\{y\}}\}$ renames $\ast_{\{y\}}$ to $x$,
and ${\psi_{\EuScript O}}_{\{x\}}({\it id}_{x})=[(\ast_{\{x\}},{\it id}_{x})]_{\approx}=[(x,{\it id}_{\ast_{\{x\}}})]_{\approx},$ which completes the proof of the theorem.
\end{proof}
\subsubsection{Algebraic definition}

If we think about the algebraic variant of  Definition \ref{exoutput}, it is clear that its cornerstone  should be an ordinary operad, i.e. a triple $(S,\nu,\eta_1)$ specified by Definition \ref{fiore}, and that the goal is to enrich this structure by a natural transformation which ``glues together'' the actions $D_{x}:S(X)\rightarrow S(X)$ and encompasses the coherence conditions these actions satisfy. We give the  definition below.

\begin{definition}[\textsc{exchangeable-output, algebraic}]\label{dddd}
A  cyclic operad is a  quadruple $(S,\nu,\eta_1, D)$, such that $(S,\nu,\eta_1)$ is an (algebraic) operad, and the natural transformation $D:\partial S\rightarrow \partial S$ satisfies the following laws:\\[0.15cm]
\indent{\em\texttt{(D0)}} $D\circ {\eta}^{\partial S}={\eta}^{\partial S}$,\\[0.1cm]
\indent{\em\texttt{(D1)}}  $D^{2}={\it id}_{\partial S}$,\\[0.1cm]
\indent{\em\texttt{(D2)}} $(\partial D\circ {\tt ex})^{3}={\it id}_{\partial\partial S},$\\[0.1cm] 
as well as the laws {\em\texttt{(D3)}} and {\em\texttt{(D4)}} given by  commutative  diagrams
\vspace{-0.1cm}\begin{center}
\begin{tikzpicture}[scale=1.5]
\node (A)  at (0,1) {\footnotesize $\partial(\partial S)\cdot S$};
\node (B) at (3.2,1) {\footnotesize $\partial(\partial S)\cdot S$};
\node (C) at (0,0.02) {\footnotesize $\partial S$};
\node (D) at (3.2,0.02) {\footnotesize $\partial S$};
\path[->,font=\footnotesize]
(A) edge node[above]{$({\tt ex}\circ\partial D\circ{\tt ex})\cdot {\it id}$} (B)
(A) edge node[left]{$\nu_3$} (C)
(B) edge node[right]{$\nu_3$} (D)
(C) edge node[below]{$D$} (D);
\end{tikzpicture}
\enspace\enspace 
\begin{tikzpicture}[scale=1.5]
\node (A)  at (0,0.75) {and};
\node (A)  at (0,0) {};
\end{tikzpicture}
 \enspace\enspace
\begin{tikzpicture}[scale=1.5]
\node (A)  at (0,1) {\footnotesize $\partial S\cdot\partial S$};
\node (B) at (2.6,1) {\footnotesize $\partial S\cdot\partial S$};
\node (C) at (0,0) {\footnotesize $\partial S$};
\node (D) at (2.6,0) {\footnotesize $\partial S\cdot\partial S$};
\node (E) at (1.3,0) {\footnotesize $\partial S$};
\path[->,font=\footnotesize]
(A) edge node[above]{$D\cdot D$} (B)
(A) edge node[left]{$\nu_4$} (C)
(B) edge node[right]{${\tt c}$} (D)
(C) edge node[below]{$D$} (E)
(D) edge node[below]{$\nu_4$} (E);
\end{tikzpicture}
\vspace{-0.3cm}\end{center}
respectively, 
where $\nu_3$ and $\nu_4$  are induced from $\nu$ as follows:
\vspace{-0.1cm} $${\small\begin{array}{cl}
\nu_3: \partial\partial S\cdot S\xrightarrow{\enspace i_l\enspace} \partial\partial S\cdot S+\partial S\cdot\partial\partial S \xrightarrow{\enspace \varphi^{-1}\enspace} \partial(\partial S\cdot\partial S)\xrightarrow{\enspace\partial\nu\enspace}\partial S, & \mbox{and}\\[0.1cm]
\nu_4: \partial S\cdot \partial S\xrightarrow{\enspace i_r\enspace} \partial\partial S\cdot S+\partial S\cdot\partial\partial S \xrightarrow{\enspace \varphi^{-1}\enspace} \partial(\partial S\cdot\partial S)\xrightarrow{\enspace\partial\nu\enspace}\partial S .&
\end{array}}$$
\end{definition}
That Definition \ref{dddd} is equivalent to Definition \ref{exoutput} will follow after the proof of the equivalence between Definition \ref{dddd} and Definition \ref{copeo} in the next section (see Table 2). As for a direct evidence, we content ourselves by showing  the correspondence between $D$ and the $D_x$'s, which we shall use in Section 4.1. Given $D:\partial S\rightarrow \partial S$, one defines $D_{x}:S(X)\rightarrow S(X)$ as \begin{equation}\label{dx}D_x=S(\sigma^{-1})\circ D_{X\backslash\{x\}}\circ S(\sigma),  \end{equation} where $\sigma:X\backslash\{x\}\cup \{\ast_{X\backslash\{x\}}\}\rightarrow X$ renames $x$ to $\ast_{X\backslash\{x\}}$. In the opposite direction, we define $D_X:\partial S(X)\rightarrow\partial S(X)$ via $D_x$ as  $D_X=D_{\ast_X}$. The correspondence between the axioms of $D$ and the ones of $D_{x}$ is  given in Table 4 below. In particular,  $(\partial D\circ {\tt ex})^{3}={\it id}_{\partial\partial S}$ corresponds exactly to the law \texttt{(DCO)} (that holds thanks to  \texttt{(DEQ)} and \texttt{(DEX)}).
\begin{center}
{\small  \begin{tabular}{lc}  
    \toprule
    $D$   &  $D_{x}$  \\
    \midrule
$D\circ {\eta}^{\partial S}={\eta}^{\partial S}$ & \texttt{(DID)}  \\[0.1cm]
    $D^{2}={\it id}_{\partial S}$    & \texttt{(DIN)} \\[0.1cm]
  $(\partial D\circ {\tt ex})^{3}={\it id}_{\partial\partial S}$ &   \texttt{(DEQ)}, \texttt{(DEX)}    \\[0.1cm]
 {\footnotesize{\textsc{commutative square} }} &  \texttt{(DC1)}      \\[0.1cm]
{\footnotesize{\textsc{commutative pentagone}}} & \texttt{(DC2)}      \\[0.1cm]
 \bottomrule
  \end{tabular}\\
\vspace{0.2cm}
Table 4}
\end{center}

\begin{rem}
Notice that, since ${\tt ex}\circ\partial D\circ{\tt ex}=\partial D\circ {\tt ex}\circ \partial D$, the diagram obtained by replacing $({\tt ex}\circ\partial D\circ{\tt ex})\cdot {\it id}$ with $(\partial D\circ {\tt ex}\circ \partial D)\cdot {\it id}$ in {\texttt{(D3)}} also commutes.
\end{rem}
 \section{The equivalence established}
This section deals with the proof of the equivalence between the two algebraic definitions of cyclic operads, Definition \ref{copeo} and Definition \ref{dddd}. Based on the equivalence between the category of species which are {\em empty on the empty set} and the category of {\em species with descent data}, established by Lamarche in \cite{l15}, this equivalence holds for {\em constant-free} cyclic operads, i.e. cyclic operads for which the underlying species $S$ is such that $S(\emptyset)=S(\{x\})=\emptyset$ (in the entries-only characterisation) and $S(\emptyset)=\emptyset$ (in the exchangeable-output characterisation), as we indicated in Section 2.2.
\subsection{Descent theory for species}
The equivalence of Lamarche comes from the background of descent theory. In the case of species, one starts  with the question
\begin{center}{\em Can we ``reconstruct'' a species $T$, given $\partial T$?} \end{center}
Intuitively, given the morphism ${\partial}^{+}:{\bf Bij}^{op}\rightarrow {\bf Bij}^{op}$ in ${\bf Cat}$, defined as ${\partial}^{+}(X)=X\cup \{\ast_X\}$, the idea is to  recover a morphism $T:{\bf Bij}^{\it op}\rightarrow {\bf Set}$ from $S=\partial T$ by ``descending" along ${\partial}^{+}$.
\vspace{-0.1cm}\begin{center}
\begin{tikzpicture}[scale=1.5]
\node (A)  at (0,1) {\footnotesize ${\bf Bij}^{\it op}$};
\node (B) at (2,1) {\footnotesize ${\bf Set}$};
\node (C) at (0,0) {\footnotesize ${\bf Bij}^{\it op}$};
\node (D) at (2,0.) {\footnotesize ${\bf Set}$};
\path[->,font=\footnotesize]
(A) edge node[above]{$\partial T$} (B)
(A) edge node[left]{$\partial^{+}$} (C)
(B) edge node[right]{${\it id}_{\bf Set}$} (D)
(C) edge node[below]{$T$} (D);
\end{tikzpicture}\vspace{-0.2cm}
\end{center}
Such a reconstruction is clearly not possible without an additional data, called the {\em descent data}, that compensates  the  loss  of  information caused by the action of the  functor $\partial : {\bf Spec}\rightarrow {\bf Spec}$. \\
\indent  Lamarche \cite{l15} defines a descent data as a pair $(S,D)$ of a species $S$ and a natural transformation $D:\partial S\rightarrow \partial S$, such that  $D^{2}={\it id}_{\partial S}$, and $(\partial D\circ{\tt ex}_S)^{3}={\it id}_{\partial\partial S}$, and he proves that {\em the assignment $\partial: {\bf Spec}/_{\emptyset} \rightarrow  {\bf Spec}^{+}$, defined as
$$ 
  T \mapsto  (\partial T, {\tt ex}_{T}),$$
is an equivalence of categories\footnote{Lamarche
proves this equivalence  in a skeletal setting, by considering functors of the form   $S:{\bf Fin}^{\it op}\rightarrow {\bf Set}$, where ${\bf Fin}$  denotes the
category of finite cardinals and permutations. The non-skeletal version that we present is an easy adaptation of his result.}.}  Here ${\bf Spec}/_{\emptyset}$ denotes the category of species $S$ such that $S(\emptyset)=\emptyset$ and ${\bf Spec}^{+}$ denotes the category of descent data. For $(S,D)\in {\bf Spec}^{+}$, the inverse functor $\int:{\bf Spec}^{+}\rightarrow{\bf Spec}/_{\emptyset}$ is defined as  $$\int (S,D)(X)=\sum_{x\in X}S(X\backslash\{x\})/_{\approx}, $$ where $\approx$ is defined  as in \eqref{d}, whereby the actions $D_{x}$ are defined via $D$ as in \eqref{dx}.
\subsection{The main theorem}
Let  ${\bf Spec}/_{\emptyset,\{\ast\}}$ be the subcategory of ${\bf Spec}/_{\emptyset}$ of species $S$ such that $S(\emptyset)=S(\{x\})=\emptyset$, for all singletons $\{x\}$, and let  ${\bf Spec}^{+}/_{\emptyset}$ be the subcategory of  ${\bf Spec}^{+}$  of descent data with species $S$  such that $S(\emptyset)=\emptyset$. The next result is a direct consequence of the equivalence of Lamarche from Section 4.1.
\begin{lem}\label{fff}
The assignment $\partial:{\bf Spec}/_{\emptyset,\{\ast\}}\rightarrow {\bf Spec}/_{\emptyset}^{+}$, defined in the same way as in 4.1, is an equivalence of categories.
\end{lem}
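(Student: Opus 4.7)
The plan is to show that Lamarche's equivalence $\partial:{\bf Spec}/_{\emptyset}\to{\bf Spec}^{+}$ restricts to an equivalence on the specified full subcategories, so the lemma will follow immediately from the material recalled in Section 4.1. The work reduces to checking that both the assignment $\partial$ and its pseudo-inverse $\int$ respect the added emptiness conditions on the respective sides.

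First I would check that $\partial$ restricts correctly. Given $T\in{\bf Spec}/_{\emptyset,\{\ast\}}$, so that $T(\emptyset)=T(\{x\})=\emptyset$ for every singleton $\{x\}$, we have
\[
(\partial T)(\emptyset)=T(\emptyset\cup\{\ast_{\emptyset}\})=T(\{\ast_{\emptyset}\})=\emptyset,
\]
so $\partial T\in{\bf Spec}/_{\emptyset}$ and hence $(\partial T,{\tt ex}_T)$ lies in ${\bf Spec}^{+}/_{\emptyset}$. On morphisms there is nothing further to verify since $\partial$ is already a functor on the larger category.

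Next I would check that the pseudo-inverse $\int$ restricts correspondingly. Given $(S,D)\in{\bf Spec}^{+}/_{\emptyset}$, so that $S(\emptyset)=\emptyset$, we have $\int(S,D)(\emptyset)=\emptyset$ as an empty indexed sum, and
\[
\int(S,D)(\{x\})=\sum_{y\in\{x\}}S(\{x\}\setminus\{y\})/_{\approx}=S(\emptyset)/_{\approx}=\emptyset,
\]
so $\int(S,D)\in{\bf Spec}/_{\emptyset,\{\ast\}}$. Again, functoriality on morphisms is inherited from Lamarche's construction.

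Finally, the unit and counit natural isomorphisms witnessing Lamarche's equivalence $\partial\dashv\int$ on ${\bf Spec}/_{\emptyset}$ automatically restrict to natural isomorphisms between $\mathrm{id}_{{\bf Spec}/_{\emptyset,\{\ast\}}}$ and $\int\circ\partial$, and between $\mathrm{id}_{{\bf Spec}^{+}/_{\emptyset}}$ and $\partial\circ\int$, since their components at objects of the subcategories are already isomorphisms in the ambient categories and, by the two verifications above, stay within the subcategories. The only conceptual point, rather than obstacle, is the observation that the extra hypothesis $T(\{x\})=\emptyset$ on the left corresponds precisely to $S(\emptyset)=\emptyset$ on the right under $\partial$, which is exactly what the computations above record; the rest is a transparent restriction argument.
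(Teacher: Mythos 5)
Your proposal is correct and follows exactly the route the paper takes: the paper simply asserts that the lemma is a direct consequence of Lamarche's equivalence, and your verification that $\partial$ and $\int$ carry the extra emptiness conditions back and forth (with $T(\{x\})=\emptyset$ on one side matching $S(\emptyset)=\emptyset$ on the other, so that the unit and counit isomorphisms restrict) is precisely the routine check being left implicit. Nothing is missing.
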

Let ${\bf CO_{en}(Spec}/_{\emptyset,\{\ast\}})$ be the category of entries-only  cyclic operads $(S,\rho,\eta_2)$ such that $S$ is an object of ${\bf Spec}/_{\emptyset,\{\ast\}}$, and let ${\bf CO_{ex}(Spec}/_{\emptyset}^{+})$ be the category of exchangeable-output cyclic operads $(S,\nu,\eta_1,D)$ such that $(S,D)$ is an object of ${\bf Spec}^{+}/_{\emptyset}$. 
In both of these categories, the (iso)morphisms are natural transformations (natural isomorphisms)  between underlying species which preserve the cyclic-operad structure. \\[0.1cm]
\indent The main result of this work is the proof that the equivalence of Lamarche   carries over, via Lemma \ref{fff}, to an equivalence between the  two algebraic definitions of cyclic operads, formally given as  the categorical equivalence between the two categories introduced above. 
\begin{rem}\label{rejs}
The reason for restricting the equivalence of Lamarche to the one of Lemma \ref{fff} (and, therefore, to the equivalence of constant-free cyclic operads) lies in the fact that, given a species $S$ from ${\bf Spec}/_{\emptyset}$, the constraint $S(\emptyset)=\emptyset$ makes the component $\rho_{\emptyset}:(\partial S\cdot \partial S)(\emptyset)\rightarrow S(\emptyset)$ of the multiplication $\rho:\partial S\cdot \partial S\rightarrow S$  the empty function, in which case  the condition $S(\{\ast_{\emptyset}\})=\emptyset$  is needed in order for the domain of $\rho_{\emptyset}$ to also be the empty set. Therefore, in the context of cyclic operads, we have to consider ${\bf Spec}/_{\emptyset,\{\ast\}}$ instead of ${\bf Spec}/_{\emptyset}$, and, consequently, ${\bf Spec}/_{\emptyset}^{+}$ instead of ${\bf Spec}^{+}$.
\end{rem}
\begin{thm}\label{3}
The categories ${\bf CO_{en}(Spec}/_{\emptyset,\{\ast\}})$ and ${\bf CO_{ex}(Spec}/_{\emptyset}^{+})$ are equivalent.
\end{thm}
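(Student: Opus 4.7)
The plan is to lift the equivalence $\partial:{\bf Spec}/_{\emptyset,\{\ast\}}\to{\bf Spec}/_{\emptyset}^{+}$ of Lemma~\ref{fff} from the level of underlying species to the level of cyclic-operad structure. I will construct functors $F$ and $G$ in the two directions between ${\bf CO_{en}(Spec}/_{\emptyset,\{\ast\}})$ and ${\bf CO_{ex}(Spec}/_{\emptyset}^{+})$ and show they are mutually quasi-inverse. The natural isomorphisms witnessing $F\!\circ\! G\simeq\mathrm{id}$ and $G\!\circ\! F\simeq\mathrm{id}$ on underlying species will be the ones supplied by Lamarche, so the work reduces to verifying that the various extra data (multiplication, unit, exchange) are intertwined by these isomorphisms.

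For the forward functor I set $F(S,\rho,\eta_2)=(\partial S,\nu,\eta_1,D)$, with $D={\tt ex}_S$ (the descent datum provided by Lamarche's construction), $\eta_1=\partial\eta_2\circ\epsilon_2^{-1}:E_1\to\partial S$ (using Lemma~\ref{derivativen}), and $\nu=\partial\rho\circ\varphi^{-1}\circ i_l:\partial\partial S\cdot\partial S\to\partial S$---which is literally the composite called $\nu_3$ in Definition~\ref{dddd}, with $\rho$ in place of $\nu$. On morphisms I put $F(\phi)=\partial\phi$, which respects all the added structure by naturality. The descent axioms (D1) and (D2) on ${\tt ex}_S$ are Lamarche's, valid for any species; (D0) then follows from the naturality of ${\tt ex}$ together with (CA2); the pentagon (D4) expresses exactly the commutativity identity $\rho\circ{\tt c}=\rho$ of Lemma~\ref{rhoc}; and the square (D3) is equivalent, after applying $\partial$, to the single instance $\rho_{21}\circ\gamma_1=\rho_{11}$ of (CA1) which, by Corollary~\ref{oh}, is equivalent to (CA1) in full. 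The operad axiom (OA2) translates from (CA2) via the chain of isomorphisms $E_1\cdot\partial S\simeq S^{\bullet}\simeq E_2\blacktriangle S$ (Remark~\ref{partial_pointed} and Lemma~\ref{2unit}), while (OA1) is extracted from (CA1) by matching the summands of $\gamma$ with those of $\beta$.

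In the opposite direction I set $G(T,\nu,\eta_1,D)=(\int(T,D),\rho,\eta_2)$, where $\int(T,D)(X)=\sum_{x\in X}T(X\setminus\{x\})/_{\approx}$ is Lamarche's species and $\approx$ is the equivalence induced by the actions $D_{x}$ as in \eqref{d}. The constraint $T(\emptyset)=\emptyset$ ensures $\int(T,D)\in{\bf Spec}/_{\emptyset,\{\ast\}}$, as required (cf.\ Remark~\ref{rejs}). To define $\rho$, I first convert $\nu$ into a partial composition $\circ_{x}$ on $T$ via the componential/algebraic equivalence for ordinary operads between Definition~\ref{operadpartial} and Definition~\ref{fiore}, then transport it to a partial composition ${_{x}\!\circ_{y}}$ on $\int(T,D)$ by the recipe in the \textsc{[exchangeable-output $\Rightarrow$ entries-only]} part of the proof of Theorem~\ref{2}, and finally set $\rho_{X}(f,g)=f\,{_{\ast_{X_1}}\!\!\circ_{\ast_{X_2}}}\,g$ as in the \textsc{[componential $\Rightarrow$ algebraic]} part of the proof of Theorem~\ref{1}. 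The unit $\eta_{2}:E_{2}\to\int(T,D)$ is defined on $\{x,y\}$ by $[(x,\eta_1(\{y\}))]_{\approx}$, which equals $[(y,\eta_1(\{x\}))]_{\approx}$ by (D0) and is therefore well-defined.

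The verifications for $G$ proceed symmetrically: (CA2) follows from (OA2) together with (D0); and (CA1), reduced by Corollary~\ref{oh} to the single equality $\rho_{21}\circ\gamma_1=\rho_{11}$, follows from the combination of (OA1) (handling the ``pure'' associativity contributions) and (D3) (handling the twist by ${\tt ex}$ inside $\gamma_1$). Once both functors are established, Lamarche's natural isomorphisms $\partial\!\circ\!\int\simeq\mathrm{id}$ and $\int\!\circ\!\partial\simeq\mathrm{id}$ of Lemma~\ref{fff} imply $F\!\circ\! G\simeq\mathrm{id}$ and $G\!\circ\! F\simeq\mathrm{id}$ after a direct check that these isomorphisms preserve multiplications and units---a check that boils down to unrolling the componential recipes used to build $G$. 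The main obstacle is the two-way translation between $\rho_{21}\circ\gamma_{1}=\rho_{11}$ and the pair (OA1)+(D3): the relevant diagram chase must carefully match the ${\tt ex}$-twist hidden in $\gamma_{1}$ with the $D$-twist in (D3), and is most transparent when carried out in components on an element of the form $((f,g),h)$, along the lines of the chase of Diagram~2 in the proof of Theorem~\ref{1}.
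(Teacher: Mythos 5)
Your overall strategy---lifting the equivalence of Lemma \ref{fff} from underlying species to the cyclic-operad structures and checking that multiplication and unit are intertwined---is the same as the paper's, but there is a concrete error at the heart of your forward construction. The composite $\partial\rho\circ\varphi^{-1}\circ i_l:\partial\partial S\cdot\partial S\to\partial S$ is \emph{not} the operad multiplication induced by $\rho$. For $f\in\partial\partial S(X_1)$ the two distinguished elements play different roles: $\ast_{X_1}$, added by the outer derivative of the $\star$-product, is the input chosen for composition, while $\ast_{X_1\cup\{\ast_{X_1}\}}$ is the output of $f$ qua element of the operad $\partial S$. The map $\varphi^{-1}\circ i_l$ promotes the \emph{outer} one to the global derivative slot, so $\partial\rho$ then glues $g$ onto the \emph{output} of $f$ and returns the chosen input as the output of the result. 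The correct definition, used in the paper, is $\nu=\partial\rho\circ\varphi^{-1}\circ i_l\circ({\tt ex}\cdot{\it id})$; the exchange isomorphism is precisely what reconciles the two roles, and with your $\nu$ the axiom \texttt{[OA1]} fails (in components your $\nu(f,g)$ is $D_{x}(f)\circ_{x}g$, and the two parallel compositions then differ by $D_xD_y$ versus $D_yD_x$). Your axiom dictionary is also too optimistic: \texttt{(D3)} is not equivalent, after applying $\partial$, to the single instance $\rho_{21}\circ\gamma_1=\rho_{11}$, and \texttt{(D4)} is not ``exactly'' $\rho\circ{\tt c}=\rho$. In the paper that single instance is obtained only after pushing $\partial\gamma_1$ through the Leibniz decomposition, which splits it into three equalities (Diagram 6) whose proofs consume \texttt{[OA1]}, \texttt{(D3)} \emph{and} \texttt{(D4)} jointly, together with $\partial{\tt ex}\circ{\tt ex}\circ\partial{\tt ex}={\tt ex}\circ\partial{\tt ex}\circ{\tt ex}$; there is no one-to-one matching of axioms across the two sides.

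A second, structural problem concerns your backward functor $G$: you route through the componential definitions by converting $\nu$ and $D$ into $\circ_x$ and $D_x$ and then invoking the translations from the proofs of Theorems \ref{2} and \ref{1}. This presupposes that a quadruple satisfying Definition \ref{dddd} yields a componential structure satisfying Definition \ref{exoutput}---but the paper obtains that equivalence as a \emph{consequence} of Theorem \ref{3} (combined with Theorems \ref{1} and \ref{2}), and only records the $D\leftrightarrow D_x$ dictionary of Table 4 without proof. As written, your argument is circular unless you first prove directly that \texttt{(D0)}--\texttt{(D4)} imply \texttt{(DID)}--\texttt{(DC2)}. The paper avoids this by staying at the algebraic level: it sets $\rho'=[\rho'_1,\rho'_2]\circ\varphi$ with $\rho'_1=\nu\circ({\tt ex}\cdot{\it id})$ and $\rho'_2=\rho'_1\circ{\tt c}$, and verifies \texttt{(CA1)} and \texttt{(CA2)} by the diagram chases of Diagrams 4--9.
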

\begin{proof} We follow the same steps as we did for the previous two theorems. The precise definitions of the  functors  and  natural trasformations  that constitute  the equivalence are cumbersome, but easy to derive from the transitions we make below.\\[0.15cm]
{\textsc{[exchangeable-output $\Rightarrow$ entries-only]}} Given a cyclic operad ${\EuScript O}=(T,\nu^{_T},{\eta_1}^{_T},D^{_T})$  from  ${\bf CO_{ex}(Spec}/_{\emptyset}^{+})$,  by  Lemma \ref{fff} we know that $(T,D^{_T})\simeq (\partial S,{\tt ex}_S)$, for some species $S$ from $\bf {Spec}/_{\emptyset,\{\ast\}}$. Together with the definitions of $\nu^{_T}$ and $\eta^{_T}$, this equivalence gives rise to an operad $(\partial S,\nu^{_{\partial S}},{\eta_1}^{_{\partial S}},{\tt ex}_S)$ such that 
${\EuScript O}\simeq(\partial S,\nu^{_{\partial S}},{\eta_1}^{_{\partial S}},{\tt ex}_S)$.  Since $\int(\partial S,{\tt ex}_S)\simeq S$,  defining a cyclic operad over the species $\int T$  amounts to defining a cyclic operad ${\EuScript C}_{\EuScript O}=(S,{\rho_{\nu}}^{_S},{\eta_2}^{_S})$ over the species $S$. We define ${\EuScript C}_{\EuScript O}$ below, whereby we shall write  $\rho$    for ${\rho_{\nu}}^{_S}$ and $\eta_2$ for ${\eta_2}^{_S}$.\\
\indent For $X=\emptyset$, $\rho_X:(\partial S\cdot \partial S)(X)\rightarrow S(X)$  is the empty function. For $X\neq \emptyset$, defining $\rho_X$ amounts to defining $\rho'_X:\partial(\partial S\cdot\partial S)(X)\rightarrow \partial S(X).$ We set $\rho'=[\rho'_1,\rho'_2]\circ\varphi$, where $\rho'_1:\partial\partial S\cdot\partial S\rightarrow\partial S$   and $\rho'_2:\partial S\cdot \partial\partial S\rightarrow \partial S$ are determined as 
 $$\rho'_1:\partial\partial S\cdot\partial S\xrightarrow{\enspace{\tt ex}\cdot {\it id}\enspace}\partial\partial S\cdot\partial S\xrightarrow{\enspace\nu\enspace}\partial S \mbox{ \enspace\enspace\enspace and \enspace\enspace\enspace} \rho'_2=\rho'_1\circ {\tt c}. $$
 Defining ${\eta_{2}}$ amounts to defining $\partial{\eta_{2}}:\partial E_2\rightarrow \partial S$, for which we set $\partial{\eta_{2}}={\eta_{1}}^{_{\partial S}}\circ\epsilon_{2}$.  We verify the axioms.\\[0.1cm]
\indent \texttt{(CA1)} By Corollary \ref{oh}, the axiom  \texttt{(CA1)} for  ${\EuScript C}_{\EuScript O}$ comes down to   the equality $\rho_{21}\circ\gamma_1=\rho_{11}$, whereas  $\rho_{21}\circ\gamma_1=\rho_{11}$ follows from   $\partial \rho_{21}\circ\partial\gamma_1=\partial \rho_{11}$. We prove the latter equality.

\indent In Diagram 4, the triangle $T$ is the diagram whose commutation we  aim to prove and the diagrams $L$ and $R$ are obtained by unfolding the definitions of $\partial\rho_{11}$ and $\partial\rho_{21}$.  We then express  $\partial(\rho'\cdot{\it id})$ in $L$ and $R$  as $\partial([\rho'_1,\rho'_2]\cdot{\it id})\circ \partial (\varphi\cdot{\it id})$. Snce $\partial (\varphi\cdot{\it id})\circ \partial (\varphi^{-1}\cdot{\it id})={\it id}$ and  $\partial([\rho'_1,\rho'_2]\cdot{\it id})\circ\partial(i\cdot{\it id})=\partial(\rho'_1\cdot{\it id})$, we transformed  Diagram 4 into  Diagram 5, in which     $L'$ and $R'$  commute.
\begin{center}
\begin{tikzpicture}[scale=1.5]
\node (U5)  at (1.375,-1.8) {\tiny $\partial( \partial S\cdot \partial S)$};
\node (U6)  at (1.375,-2.9) {\tiny $ \partial ((\partial\partial S\cdot\partial S+\partial S\cdot\partial\partial S)\cdot \partial S)$};
\node (L1)  at (-1.75,0) {\tiny $\partial ((\partial \partial S\cdot \partial S)\cdot \partial S)$};
\node (M1)  at (4.5,0) {\tiny $\partial ((\partial \partial S\cdot \partial S)\cdot \partial S)$};
\node (M0)  at (5,-1) {\tiny $\partial ((\partial \partial S\cdot \partial S+\partial S\cdot \partial\partial S)\cdot \partial S)$};
\node (M3)  at (-2.25,-1) {\tiny $\partial ((\partial \partial S\cdot \partial S+\partial S\cdot \partial\partial S)\cdot \partial S)$};
\node (M4)  at (-2.25,-1.8) {\tiny $\partial (\partial(\partial S\cdot\partial S)\cdot \partial S)$};
\node (K2)  at (5,-1.8) {\tiny $\partial (\partial(\partial S\cdot\partial S)\cdot \partial S)$};
\node (M2)  at (1.375,-1) {\tiny $\partial S$};
\node (M9)  at (1.375,-0.5) {\large $T$};
\node (M10)  at (3.25,-1.2) {\large $R$};
\node (M11)  at (-0.5,-1.2) {\large $L$};
\path[->,font=\footnotesize]
(L1) edge node[above]{\tiny $\partial\gamma_1$} (M1)
(L1) edge node[below]{\tiny $\partial\rho_{11}$} (M2)
(M1) edge node[below]{\tiny $\partial\rho_{21}$} (M2)
(M1) edge node[right]{\tiny $\partial (i\cdot{\it id})$} (M0)
(L1) edge node[left]{\tiny $\partial (i\cdot{\it id})$} (M3)
(M3) edge node[left]{\tiny $\partial (\varphi^{-1}\cdot{\it id})$} (M4)
(M0) edge node[right]{\tiny $\partial (\varphi^{-1}\cdot{\it id})$} (K2)
(K2) edge node[above]{\tiny $\partial (\rho'\cdot{\it id})$} (U5)
(M4) edge node[above]{\tiny $\partial (\rho'\cdot{\it id})$} (U5)
(K2) edge node[below,yshift=-0.1cm]{\tiny $\partial (\varphi\cdot{\it id})$} (U6)
(M4) edge node[below,yshift=-0.1cm]{\tiny $\partial (\varphi\cdot{\it id})$} (U6)
(U5) edge node[right]{\tiny $\rho'$} (M2)
(U6) edge node[right]{\tiny $\partial([\rho'_1,\rho'_2]\cdot{\it id})$} (U5);
\end{tikzpicture}\\
\vspace{0.1cm}
{\small Diagram 4.}
\end{center}
\begin{center}
\begin{tikzpicture}[scale=1.5]
\node (L1)  at (-1.75,0) {\tiny $\partial ((\partial \partial S\cdot \partial S)\cdot \partial S)$};
\node (M1)  at (4.5,0) {\tiny $\partial ((\partial \partial S\cdot \partial S)\cdot \partial S)$};
\node (M2)  at (1.375,-0.8) {\tiny $\partial S$};
\node (U5)  at (1.375,-1.6) {\tiny $\partial( \partial S\cdot \partial S)$};
\node (L')  at (0.7,-1) {$L'$};
\node (R')  at (2.05,-1) {$R'$};
\node (M9)  at (1.375,-0.4) {\large $T$};
\path[->,font=\footnotesize]
(L1) edge node[above]{\tiny $\partial\gamma_1$} (M1)
(L1) edge node[left,yshift=-0.1cm]{\tiny $\partial(\rho'_1\cdot{\it id})$} (U5)
(M1) edge node[right,yshift=-0.1cm]{\tiny $\partial(\rho'_1\cdot{\it id})$} (U5)
(L1) edge node[below]{\tiny $\partial\rho_{11}$} (M2)
(M1) edge node[below]{\tiny $\partial\rho_{21}$} (M2)
(U5) edge node[right]{\tiny $\rho'$} (M2);
\end{tikzpicture}\\
\vspace{0.1cm}
{\small Diagram 5.}
\end{center}
Therefore, the equality that needs to be proven is $A\circ\partial\gamma_1=A$, where $A=\rho'\circ \partial(\rho'_1\cdot{\it id}).$
By implementing $\rho'=[\rho'_1,\rho'_2]\circ\varphi$ in $A$  and then by using the  equalities $[\partial \rho'_1\cdot {\it id},\rho'_1\cdot\partial {\it id}]\circ\varphi=\varphi\circ\partial(\rho'_1\cdot {\it id})$ and $\partial{\it id}={\it id},$  $A$ is reformulated as $A=[B,C]\circ\varphi,$ where $B=\rho'_1\circ(\partial\rho'_1\cdot{\it id})$ and $C=\rho'_2\circ (\rho'_1\cdot{\it id})$. By setting $\Gamma=\varphi\circ\partial\gamma_1\circ\varphi^{-1}$, the equality $A\circ\partial\gamma_1=A$ reformulates as $[B,C]=[B,C]\circ\Gamma\,.$ Since $\Gamma$ can be expressed by the commutation of Diagram 6\footnote{	In the top horizontal arrow of Diagram 6, the first ${\tt ex}\cdot{\tt c}$ maps the second summand on the left to the third one on the right, and the second ${\tt ex}\cdot{\tt c}$ maps the third summand on the left to the second one on the right.}, by setting  $D=B\circ (\varphi^{-1}\cdot{\it id})\circ(i_l\cdot{\it id})$ and $E=B\circ (\varphi^{-1}\cdot{\it id})\circ(i_r\cdot{\it id}),$ the equality
$[B,C]=[B,C]\circ\Gamma$ is proven if the following three equalities hold:
 $$ D=D\circ (\alpha^{-1}\circ(\partial{\tt ex}\cdot {\tt c})\circ{\alpha}), \enspace \enspace \enspace E=C\circ(\alpha^{-1}\circ({\tt ex}\cdot {\tt c})\circ\alpha)\enspace\enspace  \enspace  \mbox{ and }\enspace \enspace \enspace C=E\circ(\alpha^{-1}\circ({\tt ex}\cdot {\tt c})\circ\alpha).
 $$
\vspace{-1cm}
\begin{center}
\begin{tikzpicture}[scale=1.5]
\node (U3)  at (4.75,1) {\tiny $(\partial\partial \partial S\cdot \partial S+\partial\partial S\cdot\partial\partial S)\cdot\partial S+(\partial\partial S\cdot\partial S)\cdot\partial\partial S$};
\node (B2)  at (4.75,2) {\tiny $(\partial\partial \partial S\cdot \partial S)\cdot \partial S+(\partial\partial S\cdot\partial\partial S)\cdot\partial S+(\partial\partial S\cdot\partial S)\cdot\partial\partial S$};
\node (U2)  at (-1.7,1) {\tiny $(\partial\partial \partial S\cdot \partial S+\partial\partial S\cdot\partial\partial S)\cdot\partial S+(\partial\partial S\cdot\partial S)\cdot\partial\partial S$};
\node (B1)  at (-1.7,2) {\tiny $(\partial\partial \partial S\cdot \partial S)\cdot\partial S+(\partial\partial S\cdot\partial\partial S)\cdot\partial S+(\partial\partial S\cdot\partial S)\cdot\partial\partial S$};
\node (B0)  at (-1.7,3) {\tiny $\partial\partial \partial S\cdot (\partial S\cdot\partial S)+\partial\partial S\cdot(\partial\partial S\cdot\partial S)+\partial\partial S\cdot(\partial S\cdot\partial\partial S)$};
\node (B5)  at (4.75,3) {\tiny $\partial\partial \partial S\cdot (\partial S\cdot\partial S)+\partial\partial S\cdot(\partial\partial S\cdot\partial S)+\partial\partial S\cdot(\partial S\cdot\partial\partial S)$};
\node (U4)  at (4.75,0) {\tiny $\partial (\partial \partial S\cdot \partial S)\cdot \partial S+(\partial\partial S\cdot\partial S)\cdot\partial\partial S$};
\node (U1)  at (-1.7,0) {\tiny $\partial (\partial \partial S\cdot \partial S)\cdot \partial S+(\partial\partial S\cdot\partial S)\cdot\partial\partial S$};
\path[->,font=\footnotesize]
(U2) edge node[left]{\tiny $\Delta+{\it id}$} (B1)
(B1) edge node[left]{\tiny $\alpha+\alpha+\alpha$} (B0)
(B0) edge node[above]{\tiny $\partial{\tt ex}\cdot{\tt c}+{\tt ex}\cdot{\tt c}+{\tt ex}\cdot{\tt c}$} (B5)
(B2) edge node[right]{\tiny $\Delta^{-1}+{\it id}$} (U3)
(B5) edge node[right]{\tiny $\alpha^{-1}+\alpha^{-1}+\alpha^{-1}$} (B2)
(U1) edge node[above]{\tiny $\Gamma$} (U4)
(U1) edge node[left]{\tiny $\varphi\cdot{\it id}+{\it id}$} (U2)
(U3) edge node[right]{\tiny $\varphi^{-1}\cdot{\it id}+{\it id}$} (U4);
\end{tikzpicture}\\
\vspace{0.15cm}
{\small Diagram 6.}
\end{center}

\indent Therefore, the first equality that needs to be proven is $$\rho'_1\circ(\partial\rho'_1\cdot{\it id})\circ(\varphi^{-1}\cdot{\it id})\circ(i_l\cdot{\it id})=\rho'_1\circ(\partial\rho'_1\cdot{\it id})\circ(\varphi^{-1}\cdot{\it id})\circ(i_l\cdot{\it id})\circ(\alpha^{-1}\circ(\partial{\tt ex}\cdot {\tt c})\circ{\alpha}),  $$
and the outer part of Diagram 7 corresponds exactly to this equality once the definition of $\rho'$ (via $\nu$) is unfolded.
The rest of the arrows  show that the outer part indeed commutes. Notice  that \\[-0.7cm]
\begin{itemize}
\item $J_l$ and $J_r$ commute since they are   the commuting squares from Remark 3.24 and Definition \ref{dddd}, respectively, once the definition of $\nu_3$ is unfolded and $D$ is set to be ${\tt ex}$,\\[-0.7cm]
\item $K$ commutes as it represents the equality $\nu_{21}\circ\beta_1=\nu_{11}$  (see Definition \ref{fiore}), and \\[-0.7cm]
\item  $I_l, I_r, L$ and $M$ commute as they represent  naturality conditions for $\varphi$ and $\alpha$.
\end{itemize}
\begin{center}
\begin{tikzpicture}[scale=1.5]
\node (L1)  at (0,0) {\footnotesize ($\partial \partial \partial S\cdot \partial S)\cdot \partial S$};
\node (I_l)  at (1.2,-1.4) {$I_l$};
\node (I_r)  at (7.1,-1.4) {$I_r$};
\node (L2)  at (0,-1) {\footnotesize $\partial (\partial \partial S\cdot \partial S)\cdot \partial S$};
\node (L3)  at (0,-2) {\footnotesize $\partial (\partial \partial S\cdot \partial S)\cdot \partial S$};
\node (L4)  at (0,-3) {\footnotesize $\partial \partial S\cdot \partial S$};
\node (L5)  at (0,-4) {\footnotesize $\partial \partial S\cdot \partial S$};
\node (M1)  at (2.75,0) {\footnotesize $\partial \partial \partial S\cdot (\partial S\cdot \partial S)$};
\node (M2)  at (5.5,0) {\footnotesize $\partial \partial \partial S\cdot (\partial S\cdot \partial S)$};
\node (R1) at (8.25,0) {\footnotesize ($\partial \partial \partial S\cdot \partial S)\cdot \partial S$};
\node (R2)  at (8.25,-1) {\footnotesize $\partial (\partial \partial S\cdot \partial S)\cdot \partial S$};
\node (R3)  at (8.25,-2) {\footnotesize $\partial (\partial \partial S\cdot \partial S)\cdot \partial S$};
\node (R4)  at (8.25,-3) {\footnotesize $\partial \partial S\cdot \partial S$};
\node (R5)  at (8.25,-4) {\footnotesize $\partial \partial S\cdot \partial S$};
\node (BM)  at (4.125,-5) {\footnotesize $\partial S$};
\node (BJ)  at (4.125,-4) { $K$};
\node (BC)  at (4.125,-2.5) { $L$};
\node (BC)  at (4.125,-1) { $M$};
\node (BJL)  at (1.2,-3) { $J_l$};
\node (BJR)  at (7.1,-3) { $J_r$};
\node (IL1)  at (2.75,-2) {\tiny ($\partial \partial \partial S\cdot \partial S)\cdot \partial S$};
\node (IL2)  at (2.75,-3) {\tiny ($\partial \partial \partial S\cdot \partial S)\cdot \partial S$};
\node (IL3)  at (2.75,-4) {\tiny $\partial (\partial \partial S\cdot \partial S)\cdot \partial S$};
\node (IR1)  at (5.5,-2) {\tiny ($\partial \partial \partial S\cdot \partial S)\cdot \partial S$};
\node (IR2)  at (5.5,-3) {\tiny ($\partial \partial \partial S\cdot \partial S)\cdot \partial S$};
\node (IR3)  at (5.5,-4) {\tiny $\partial (\partial \partial S\cdot \partial S)\cdot \partial S$};
\path[->,font=\footnotesize]
(L1) edge node[above]{$\scriptsize\alpha$} (M1)
(R1) edge node[sloped,above]{\tiny $(\partial{\tt ex}\cdot{\it id})\cdot{\it id}$} (IR1)
(L1) edge node[left]{\scriptsize $(\varphi^{-1}\circ i_l)\!\cdot\!{\it id}$} (L2)
(L2) edge node[left]{\scriptsize $\partial ({\tt ex}\cdot id)\cdot{\it id}$} (L3)
(L3) edge node[left]{\scriptsize $\partial\nu \cdot {\it id}$} (L4)
(L4) edge node[left]{\scriptsize ${\tt ex}\cdot{\it id}$} (L5)
(L5) edge node[below]{\scriptsize $\nu$} (BM)
(R1) edge node[right]{\scriptsize $(\varphi^{-1}\circ i_l)\!\cdot\!{\it id}$} (R2)
(R2) edge node[right]{\scriptsize $\partial ({\tt ex}\cdot id)\cdot{\it id}$} (R3)
(R3) edge node[right]{\scriptsize $\partial\nu \cdot {\it id}$} (R4)
(R4) edge node[right]{\scriptsize ${\tt ex}\cdot{\it id}$} (R5)
(R5) edge node[below]{\scriptsize $\nu$} (BM)
(M1) edge node[above]{\scriptsize $\partial{\tt ex}\cdot {\tt c}$} (M2)
(M2) edge node[above]{\scriptsize ${\alpha}^{-1}$} (R1)
(IL1) edge node[above]{\tiny $ (\varphi^{-1}\circ i_l)\!\cdot\!{\it id}$} (L3)
(IR1) edge node[above]{\tiny $ (\varphi^{-1}\circ i_l)\!\cdot\!{\it id}$} (R3)
(IL1) edge node[left]{\tiny $(\partial {\tt ex}\circ {\tt ex}\circ \partial{\tt ex})\cdot {\it id}$} (IL2)
(IL2) edge node[left]{\tiny $({\varphi}^{-1}\circ i_l)\cdot{\it id}$} (IL3)
(IL1) edge node[above]{\tiny ${\alpha^{\cdot}}^{\scalebox{0.75}{-1}}\!\circ\! (\partial{\tt ex}\cdot c)\!\circ\!\alpha^{\cdot}$} (IR1)
(IL3) edge node[above]{\tiny $\partial \nu\cdot {\it id}$} (L5)
(IR1) edge node[right]{\tiny $({\tt ex}\circ \partial{\tt ex}\circ {\tt ex})\cdot {\it id}$} (IR2)
(IR2) edge node[right]{\tiny $({\varphi}^{-1}\circ i_l)\cdot{\it id}$} (IR3)
(IR3) edge node[above]{\tiny $\partial \nu\cdot {\it id}$} (R5)
(IL2) edge node[above]{\tiny ${\alpha^{\cdot}}^{\scalebox{0.75}{-1}}\!\circ\! ({\tt ex}\cdot c)\!\circ\!\alpha^{\cdot}$} (IR2)
(L1) edge node[sloped,above]{\tiny $(\partial{\tt ex}\cdot{\it id})\cdot{\it id}$} (IL1);
\end{tikzpicture}\\
\vspace{0.1cm}
{\small Diagram 7.}
\end{center}
\vspace{-0.7cm}
\indent The second equality is $\rho'_1\circ(\partial\rho'_1\cdot{\it id})\circ(\varphi^{-1}\cdot{\it id})\circ(i_r\cdot{\it id})=\rho'_2\circ (\rho'_1\cdot{\it id})\circ\alpha^{-1}\circ({\tt ex}\cdot {\tt c})\circ{\alpha},$ and the corresponding diagram is (the outer part of) Diagram 8.
This diagram commutes because \\[-0.7cm]
\begin{itemize}
\item $I$ is the commuting pentagon from Definition \ref{dddd}, once the definition of $\nu_4$ is unfolded and $D$ is set to be ${\tt ex}$, and \\[-0.7cm]
\item $J$ commutes as it corresponds  to the equality $\nu_{22}\circ\beta_2=\nu_{12}$ (see Definition \ref{fiore}). 
\end{itemize} 
\begin{center}
\begin{tikzpicture}[scale=1.5]
\node (L1)  at (0,0) {\footnotesize ($\partial \partial S\cdot \partial\partial S)\cdot \partial S$};
\node (L2)  at (0,-1) {\footnotesize $\partial (\partial\partial S\cdot \partial S)\cdot \partial S$};
\node (L3)  at (0,-2) {\footnotesize $\partial (\partial \partial S\cdot \partial S)\cdot \partial S$};
\node (L4)  at (0,-3) {\footnotesize $\partial \partial S\cdot \partial S$};
\node (L5)  at (0,-4) {\footnotesize $\partial \partial S\cdot \partial S$};
\node (M1)  at (2.75,0) {\footnotesize $\partial \partial  S\cdot ( \partial\partial S\cdot \partial S)$};
\node (M2)  at (5.5,0) {\footnotesize $\partial \partial  S\cdot (\partial S\cdot \partial\partial S)$};
\node (R1) at (8.25,0) {\footnotesize ($\partial \partial S\cdot \partial S)\cdot \partial\partial S$};
\node (R2)  at (8.25,-1) {\footnotesize ($\partial \partial S\cdot \partial S)\cdot \partial\partial S$};
\node (R3)  at (8.25,-2) {\footnotesize $\partial S\cdot \partial\partial S$};
\node (R4)  at (8.25,-3) {\footnotesize $\partial \partial S\cdot \partial S$};
\node (R5)  at (8.25,-4) {\footnotesize $\partial \partial S\cdot \partial S$};
\node (BM)  at (4.125,-5) {\footnotesize $\partial S$};
\node (IL1)  at (2.75,-2) {\tiny ($\partial \partial S\cdot \partial\partial S)\cdot \partial S$};
\node (I)  at (1.375,-3) { $I$};
\node (J)  at (4.125,-4) { $J$};
\node (IL2)  at (2.75,-2.67) {\tiny ($\partial \partial S\cdot \partial\partial S)\cdot \partial S$};
\node (IR3)  at (5.5,-3.34) {\tiny $\partial \partial S\cdot (\partial\partial S\cdot \partial S)$};
\node (IL3)  at (2.75,-3.34) {\tiny ($\partial \partial S\cdot \partial\partial S)\cdot \partial S$};
\node (IL4)  at (2.75,-4) {\tiny $\partial (\partial \partial S\cdot \partial S)\cdot \partial S$};
\node (IL0)  at (2.75,-1) {\tiny $\partial \partial  S\cdot ( \partial\partial S\cdot \partial S)$};
\node (IL12)  at (2.75,-1.5) {\tiny $(\partial \partial  S\cdot \partial\partial S)\cdot \partial S$};
\node (IR0)  at (5.5,-1) {\tiny $\partial \partial  S\cdot (\partial S\cdot \partial\partial S)$};
\node (IR1)  at (5.5,-2.34) {\tiny $\partial \partial  S\cdot (\partial\partial S\cdot \partial S)$};
\path[->,font=\footnotesize]
(L1) edge node[sloped,above,yshift=-0.05cm]{\scriptsize ${\tt c}\cdot{\it id}$} (IL12)
(IL12) edge node[above]{\scriptsize $\alpha$} (IR1)
(L1) edge node[above]{$\scriptsize\alpha^{\cdot}$} (M1)
(L1) edge node[sloped,above,yshift=-0.05cm]{\scriptsize $\alpha$} (IL0)
(IL0) edge node[above]{\scriptsize ${\it id}\cdot{\tt c}$} (IR0)
(IR0) edge node[above]{\scriptsize ${\alpha^{\cdot}}^{-1}$} (R2)
(R2) edge node[above]{\scriptsize ${\tt c}$} (IR1)
(IR1) edge node[above,sloped,yshift=-0.05cm]{\scriptsize ${\it id}\cdot \nu$} (R4)
(L1) edge node[left]{\scriptsize $(\varphi^{-1}\circ i_r)\cdot{\it id}$} (L2)
(L2) edge node[left]{\scriptsize $\partial({\tt ex}\cdot{\it id}) \cdot {\it id}$} (L3)
(L3) edge node[left]{\scriptsize $\partial{\nu}\cdot{\it id}$} (L4)
(L4) edge node[left]{\scriptsize ${\tt ex}\cdot{\it id}$} (L5)
(L5) edge node[below]{\scriptsize $\nu$} (BM)
(R1) edge node[right]{\scriptsize $({\tt ex}\cdot{\it id})\!\cdot\!{\it id}$} (R2)
(R2) edge node[right]{\scriptsize $\nu\cdot{\it id}$} (R3)
(R3) edge node[right]{\scriptsize ${\tt c}$} (R4)
(R4) edge node[right]{\scriptsize ${\tt ex}\cdot{\it id}$} (R5)
(R5) edge node[below]{\scriptsize $\nu$} (BM)
(M1) edge node[above]{\scriptsize ${\tt ex}\cdot {\tt c}$} (M2)
(L1) edge node[sloped,below]{\tiny $({\tt ex}\cdot{\it id})\cdot{\it id}$} (IL1)
(IL1) edge node[above]{\tiny $ (\varphi^{-1}\circ i_r)\!\cdot\!{\it id}$} (L3)
(IL4) edge node[above]{\tiny $\partial\nu\!\cdot\!{\it id}$} (L5)
(IL1) edge node[left]{\tiny $ ({\tt ex}\cdot{\tt ex})\!\cdot\!{\it id}$} (IL2)
(IL2) edge node[left]{\tiny ${\tt c}\!\cdot\!{\it id}$} (IL3)
(IL3) edge node[left]{\tiny $ (\varphi^{-1}\circ i_r)\!\cdot\!{\it id}$} (IL4)
(IL3) edge node[above]{\tiny $\alpha$} (IR3)
(IR1) edge node[right]{\tiny ${\tt ex}\cdot{\it id}$} (IR3)
(IR3) edge node[sloped,above]{\tiny ${\it id}\cdot\nu$} (R5)
(M2) edge node[above]{\scriptsize ${\alpha^{\cdot}}^{-1}$} (R1);
\end{tikzpicture}\\
\vspace{0.1cm}
{\small Diagram 8.}
\end{center}

\indent The last equality is $\rho'_2\circ (\rho'_1\cdot{\it id})=\rho'_1\circ(\partial\rho'_1\cdot{\it id})\circ(\varphi^{-1}\cdot{\it id})\circ(i_r\cdot{\it id})\circ\alpha^{-1}\circ({\tt ex}\cdot {\tt c})\circ{\alpha},$ and it follows from the second one since $({\tt ex}\cdot{\tt c})^{-1}={\tt ex}\cdot{\tt c}$. \\[0.1cm]
\indent \texttt{(CA2)} By similar analysis as we did for \texttt{(CA1)}, it can be shown that $\texttt{(CA2)}$ follows from the equalities  \vspace{-0.1cm} \begin{equation}\label{e1}\rho_1\circ(\partial\partial\eta_{\EuScript C}\cdot {\it id})=\partial\pi_1\circ\partial\lambda^{\blacktriangle}\circ(\varphi^{-1}\circ i_l) : \partial\partial E_2\cdot\partial S\rightarrow \partial S \vspace{-0.1cm}\end{equation} and   \begin{equation}\label{u2}\rho_2\circ(\partial\eta_{\EuScript C}\cdot {\it id})=\partial\pi_1\circ\partial\lambda^{\blacktriangle}\circ(\varphi^{-1}\circ i_r) : \partial E_2\cdot\partial\partial S\rightarrow \partial S.  \end{equation}
\indent In Diagram 9,  the inner triangle represents the equation \eqref{e1}. It  commutes by the commutations of the three diagrams that surround it  (easy to check) and from the commutation of the outer triangle, which represents the left triangle from the axiom \texttt{(OA2)}. The  equality \eqref{u2} is verified by a similar diagram, whose outer part will commute as the right triangle from  \texttt{(OA2)}.

\begin{center}
\begin{tikzpicture}[scale=1.5]
\node (A)  at (0,1.2) {\footnotesize $\partial\partial E_2\cdot \partial S$};
\node (G)  at (-2,1.9) {\footnotesize $\partial E_1\cdot \partial S$};
\node (B) at (2.8,1.2) {\footnotesize $\partial\partial S\cdot \partial S$};
\node (F) at (4.8,1.9) {\footnotesize $\partial\partial S\cdot \partial S$};
\node (C) at (0.57,0.25) {\footnotesize $\partial (\partial E_2\cdot\partial S)$};
\node (D) at (1.4,-1.45) {\footnotesize $\partial S$};
\node (D1) at (1.45,0.5) {\small $D_1$};
\node (E) at (0.95,-0.55) {\footnotesize $\partial S^{\bullet}$ };
\path[->,font=\scriptsize]
(A) edge node[above]{$\partial\partial \eta_{\EuScript C}\cdot{\it id}$} (B)
(A) edge node[right,xshift=0.1cm]{$\partial\epsilon_2\cdot{\it id}$} (G)
(A) edge node[right]{$\varphi^{-1}\circ i_l$} (C)
(C) edge node[right]{$\partial\lambda^{\blacktriangle}$} (E)
(B) edge node[right]{$\rho_1$} (D)
(F) edge node[right,yshift=-0.1cm]{$\nu$} (D)
(B) edge node[left,xshift=-0.1cm]{${\tt ex}\cdot{\it id}$} (F)
(G) edge node[above]{${\eta_{\EuScript O}}\cdot{\it id}$} (F)
(G) edge node[left]{${\lambda^{\star}}$} (D)
(E) edge node[right,yshift=0.1cm,xshift=-0.1cm]{$\partial\pi_1$} (D);
\end{tikzpicture}\\
\vspace{0.1cm}
{\small Diagram 9.}
\end{center}
{\textsc{[entries-only $\Rightarrow$ exchangeable-output]}}  Given a  cyclic operad  ${\EuScript C}=(S,\rho^{_S},{\eta_2}^{_S})$, we define  ${\EuScript O}_{\EuScript C}=(\partial S,{\nu_{\rho}}^{_{\partial S}},{\eta_1}^{_{\partial S}},{\tt ex}_S)$ by introducing ${\nu_{\rho}}^{_{\partial S}}:\partial S\star \partial S\rightarrow \partial S$ ($\nu$ for short) as  $$\nu:\partial\partial S\cdot\partial S\xrightarrow{\enspace{\tt ex}\cdot{\it id}\enspace}\partial\partial S\cdot\partial S\xrightarrow{\enspace i_l\enspace} \partial\partial S\cdot\partial S+\partial S\cdot \partial\partial S\xrightarrow{\enspace\varphi^{-1}\enspace}\partial({\partial S\cdot \partial S})\xrightarrow{\enspace\partial\rho\enspace}\partial S, $$ and   ${\eta_1}^{_{\partial S}}:E_1\rightarrow \partial S$ ($\eta_1$ for short) as   $\eta_1=\partial{\eta_2}^{_S}\circ\epsilon^{-1}_2.$ We now indicate how to verify the axioms.\\[0.1cm]
\indent \texttt{[OA1]} The verification of \texttt{[OA1]} for  ${\EuScript O}_{\EuScript C}$ uses   equalities $(\partial{\tt ex}\circ{\tt ex})^{3}={\it id}_{\partial\partial S}$ and $\partial\rho_{2}\circ{\partial\gamma}=\partial\rho_{1}$. The outer part of Diagram 10 represents the equality $\nu_{21}\circ\beta_1=\nu_{11}$ (once the definition of $\nu$ via $\rho$  is unfolded). The proof that it commutes uses\\[-0.7cm]
\begin{itemize}
\item the commutation of the diagram $E$, where $\psi=(\varphi^{-1}\circ i_l)\circ((\varphi^{-1}\circ i_l)\cdot{\it id}) \circ(({\tt ex}\cdot{\it id})\cdot{\it id})\circ ((\partial{\tt ex}\cdot{\it id})\cdot{\it id}),$ which follows  by the equality $\partial{\tt ex}\circ{\tt ex}\circ\partial{\tt ex}={\tt ex}\circ\partial{\tt ex}\circ{\tt ex}$,\\[-0.7cm]
\item the commutation of the diagram $G$, which represents the equality $\partial\rho_{21}\circ{\partial\gamma_1}=\partial\rho_{11}$, \\[-0.7cm]
\item the commutations of $F_1$ and $F_2$, which are simple ``renaming'' diagrams, and\\[-0.7cm]
\item the commutations of $R_1$ and $R_2$, which follow by the naturality of $\rho$.
\end{itemize}
The outer part of Diagram 11, which represents the equality $\nu_{22}\circ\beta_2=\nu_{12}$, commutes by \\[-0.7cm]
\begin{itemize}
\item the commutation of the diagram $E$,   where $\phi_1={\varphi^{-1}}\circ i_l\circ(\varphi^{-1}\circ i_l\circ({\tt ex}\cdot {\it id}))\cdot{\it id}$ and $\phi_2=\varphi^{-1}\circ i_l\circ({\tt ex}\cdot {\it id})$, which follows by the naturality of $\gamma_2$ (notice that $\gamma_2=({{\it id}\cdot({\tt ex}\cdot{\it id})})\circ\alpha$),\\[-0.7cm]
\item the commutation of $G$, which represents the equality $\partial\rho_{22}\circ{\partial\gamma_2}=\partial\rho_{12}$, and\\[-0.7cm]
\item the commutations of $F_1$, $F_2$, $R_1$ and $R_2$, which are of the same kind as in Diagram 10.\\[-0.7cm]
\end{itemize}
Notice that the equality $\nu_{23}\circ\beta_3=\nu_{13}$ also follows by the commutation of Diagram 11. \\[0.1cm]
\indent  \texttt{[OA2]} The commutation of the left triangle from \texttt{[OA2]} follows by replacing  $\nu$ in Diagram 9 with its definition via $\rho$ and by setting $\rho_1=\partial\rho\circ\varphi^{-1}\circ i_l$. The commutation of the right triangle follows analogously, relative to the diagram that arises in the proof of the equality \eqref{u2}. 
\begin{center}
\begin{tikzpicture}[scale=1.5]
\node (L1)  at (0,0) {\footnotesize ($\partial \partial \partial S\cdot \partial S)\cdot \partial S$};
\node (L15)  at (0,-4) {\footnotesize $\partial \partial S\cdot \partial S$};
\node (L12)  at (0,-4.8) {\footnotesize $\partial (\partial S\cdot \partial S)$};
\node (F)  at (2.75,-4) {\tiny $\partial(\partial(\partial S\cdot\partial S)\cdot\partial S)$};
\node (G)  at (5.5,-4) {\tiny $\partial(\partial(\partial S\cdot\partial S)\cdot\partial S)$};
\node (A)  at (2.75,-2.4) {\tiny $\partial((\partial\partial S\cdot\partial S)\cdot \partial S)$};
\node (B)  at (5.5,-2.4) {\tiny $\partial((\partial\partial S\cdot\partial S)\cdot \partial S)$};
\node (R12)  at (8.25,-4.8) {\footnotesize $\partial (\partial S\cdot \partial S)$};
\node (L25)  at (0,-3.2) {\footnotesize $\partial \partial (\partial S\cdot\partial S)\cdot \partial S$};
\node (L35)  at (0,-2.4) {\footnotesize $\partial \partial (\partial S\cdot\partial S)\cdot \partial S$};
\node (L45)  at (0,-1.6) {\footnotesize $\partial (\partial\partial S\cdot\partial S)\cdot \partial S$};
\node (L55)  at (0,-0.8) {\footnotesize $\partial (\partial\partial S\cdot\partial S)\cdot \partial S$};
\node (M1)  at (2.75,0) {\footnotesize $\partial \partial \partial S\cdot (\partial S\cdot \partial S)$};
\node (M2)  at (5.5,0) {\footnotesize $\partial \partial \partial S\cdot (\partial S\cdot \partial S)$};
\node (R1) at (8.25,0) {\footnotesize ($\partial \partial \partial S\cdot \partial S)\cdot \partial S$};
\node (R15)  at (8.25,-4) {\footnotesize $\partial \partial  S\cdot\partial S$};
\node (R25)  at (8.25,-3.2) {\footnotesize $\partial \partial (\partial S\cdot\partial S)\cdot \partial S$};
\node (R35)  at (8.25,-2.4) {\footnotesize $\partial \partial (\partial S\cdot\partial S)\cdot \partial S$};
\node (R45)  at (8.25,-1.6) {\footnotesize $\partial (\partial\partial S\cdot\partial S)\cdot \partial S$};
\node (R55)  at (8.25,-0.8) {\footnotesize $\partial (\partial\partial S\cdot\partial S)\cdot \partial S$};
\node (BM)  at (4.125,-4.8) {\footnotesize $\partial S$};
\node (E)  at (4.125,-1.2) {\small $E$};
\node (K)  at (4.125,-4) {\small $G$};
\node (F1)  at (1.375,-2.4) {\small $F_1$};
\node (F2)  at (6.875,-2.4) {\small $F_2$};
\node (Rg1)  at (1,-4) {\small $R_1$};
\node (Rg2)  at (7.2,-4) {\small $R_2$};
\path[->,font=\footnotesize]
(L1) edge node[above]{$\scriptsize\alpha^{\cdot}$} (M1)
(L1) edge node[left]{\tiny $(\varphi^{-1}\circ{i_l})\cdot{\it id}$} (L55)
(L1) edge node[right]{\tiny $\psi$} (A)
(R1) edge node[left]{\tiny $\psi$} (B)
(L25) edge node[left]{\tiny $\partial\partial\rho\cdot{\it id}$} (L15)
(L15) edge node[left]{\tiny $\varphi^{-1}\circ i_l$} (L12)
(A) edge node[right,yshift=-0.2cm]{\tiny $\partial((\varphi^{-1}\circ i_l)\cdot {\it id})$} (F)
(B) edge node[left,yshift=0.2cm]{\tiny $\partial((\varphi^{-1}\circ i_l)\cdot {\it id})$} (G)
(A) edge node[above]{\tiny $\partial\gamma_1$} (B)
(F) edge node[below,sloped]{\tiny $\partial(\partial\rho\cdot{\it id})$} (L12)
(G) edge node[below,sloped]{\tiny $\partial(\partial\rho\cdot{\it id})$} (R12)
(R15) edge node[right]{\tiny $\varphi^{-1}\circ i_l$} (R12)
(L25) edge node[above,sloped]{\tiny $\varphi^{-1}\circ i_l$} (F)
(L45) edge node[left]{\tiny $\partial(\varphi^{-1}\circ i_l)\cdot{\it id}$} (L35)
(L55) edge node[left]{\tiny $\partial({\tt ex}\cdot{\it id})\cdot{\it id}$} (L45)
(L12) edge node[below]{\scriptsize $\partial\rho$} (BM)
(R12) edge node[below]{\scriptsize $\partial\rho$} (BM)
(R1) edge node[right]{\tiny $(\varphi^{-1}\circ{i_l})\cdot{\it id}$} (R55)
(R25) edge node[right]{\tiny $\partial\partial\rho\cdot{\it id}$} (R15)
(R35) edge node[right]{\tiny ${\tt ex}\cdot {\it id}$} (R25)
(L35) edge node[left]{\tiny ${\tt ex}\cdot {\it id}$} (L25)
(R25) edge node[above,sloped]{\tiny $\varphi^{-1}\circ i_l$} (G)
(R45) edge node[right]{\tiny $\partial(\varphi^{-1}\circ i_l)\cdot{\it id}$} (R35)
(R55) edge node[right]{\tiny $\partial({\tt ex}\cdot{\it id})\cdot{\it id}$} (R45)
(M1) edge node[above]{\scriptsize ${\tt ex}\cdot {\tt c}$} (M2)
(M2) edge node[above]{\scriptsize ${\alpha^{\cdot}}^{-1}$} (R1);
\end{tikzpicture}
\vspace{0.1cm}
{\small Diagram 10.}
\end{center}
\vspace{-0.75cm}
\begin{center}
\begin{tikzpicture}[scale=1.5]
\node (L1)  at (0,0) {\footnotesize ($\partial \partial  S\cdot \partial\partial S)\cdot \partial S$};
\node (F)  at (2.75,-4) {\tiny $\partial(\partial(\partial S\cdot\partial S)\cdot \partial S)$};
\node (B)  at (5.5,-2.4) {\tiny $\partial(\partial S\cdot(\partial\partial S\cdot\partial S))$};
\node (G)  at (5.5,-4) {\tiny $\partial(\partial S\cdot \partial(\partial S\cdot\partial S))$};
\node (A)  at (2.75,-2.4) {\tiny $\partial((\partial S\cdot\partial\partial S)\cdot \partial S)$};
\node (L15)  at (0,-4) {\footnotesize $\partial \partial S\cdot \partial S$};
\node (L16)  at (0,-4.8) {\footnotesize $\partial (\partial S\cdot \partial S)$};
\node (L25)  at (0,-3.2) {\footnotesize $\partial \partial (\partial S\cdot\partial S)\cdot \partial S$};
\node (L35)  at (0,-2.4) {\footnotesize $\partial \partial (\partial S\cdot\partial S)\cdot \partial S$};
\node (L45)  at (0,-1.6) {\footnotesize $\partial (\partial\partial S\cdot\partial S)\cdot \partial S$};
\node (L55)  at (0,-0.8) {\footnotesize $\partial (\partial\partial S\cdot\partial S)\cdot \partial S$};
\node (R1) at (8.25,0) {\footnotesize $\partial \partial  S\cdot (\partial\partial S\cdot \partial S)$};
\node (R15)  at (8.25,-4) {\footnotesize $\partial (\partial  S\cdot\partial S)$};
\node (R25)  at (8.25,-3.2) {\footnotesize $\partial \partial  S\cdot\partial S$};
\node (R35)  at (8.25,-2.4) {\footnotesize $\partial\partial S\cdot\partial(\partial S\cdot \partial S)$};
\node (R45)  at (8.25,-1.6) {\footnotesize $\partial\partial S\cdot\partial(\partial S\cdot \partial S)$};
\node (R55)  at (8.25,-0.8) {\footnotesize $\partial \partial  S\cdot (\partial\partial S\cdot \partial S)$};
\node (BM)  at (4.125,-4.8) {\footnotesize $\partial S$};
\node (E)  at (4.125,-1.2) {\small $E$};
\node (K)  at (4.125,-4) {\small $G$};
\node (F1)  at (1.375,-2.4) {\small $F_1$};
\node (F2)  at (6.875,-2.4) {\small $F_2$};
\node (Rg1)  at (1,-4) {\small $R_1$};
\node (Rg2)  at (7.2,-3.5) {\small $R_2$};
\path[->,font=\footnotesize]
(F) edge node[below,sloped]{\tiny $\partial(\partial\rho\cdot{\it id})$} (L12)
(A) edge node[above]{\tiny $\partial\gamma_2$} (B)
(G) edge node[above]{\tiny $\partial({\it id}\cdot{\partial\rho})$} (R15)
(B) edge node[left,yshift=0.2cm]{\tiny $\partial({\it id}\cdot(\varphi^{-1}\circ i_l))$} (G)
(L1) edge node[right]{\tiny $\phi_1$} (A)
(R55) edge node[left,yshift=0.1cm]{\tiny $\phi_2$} (B)
(A) edge node[right,yshift=-0.2cm]{\tiny $\partial((\varphi^{-1}\circ i_r)\cdot {\it id})$} (F)
(L1) edge node[above]{$\scriptsize\alpha$} (R1)
(L1) edge node[left]{\tiny $(\varphi^{-1}\circ{i_r})\cdot{\it id}$} (L55)
(L25) edge node[left]{\tiny $\partial\partial\rho\cdot{\it id}$} (L15)
(L15) edge node[left]{\tiny $\varphi^{-1}\circ i_l$} (L16)
(R15) edge node[below]{\tiny $\partial\rho$} (BM)
(L16) edge node[below]{\tiny $\partial\rho$} (BM)
(L45) edge node[left]{\tiny $\partial(\varphi^{-1}\circ i_l)\cdot{\it id}$} (L35)
(L55) edge node[left]{\tiny $\partial({\tt ex}\cdot{\it id})\cdot{\it id}$} (L45)
(R1) edge node[right]{\tiny ${\it id}\cdot({\tt ex}\cdot{\it id})$} (R55)
(R25) edge node[right]{\tiny $\varphi^{-1}\circ i_l$} (R15)
(R35) edge node[right]{\tiny ${\it id}\cdot\partial\rho$} (R25)
(R35) edge node[above,sloped]{\tiny $\varphi^{-1}\circ i_l$} (G)
(L35) edge node[left]{\tiny ${\tt ex}\cdot {\it id}$} (L25)
(R45) edge node[right]{\tiny ${\tt ex}\cdot{\it id}$} (R35)
(L25) edge node[above,sloped]{\tiny $\varphi^{-1}\circ i_l$} (F)
(R55) edge node[right]{\tiny ${\it id}\cdot(\varphi^{-1}\circ i_l)$} (R45);
\end{tikzpicture}
\vspace{0.1cm}
{\small Diagram 11.}
\end{center}
\textsc{[the isomorphism of cylcic operads ${\EuScript C}$ and ${\EuScript C}_{{\EuScript O}_{\EuScript C}}$ (and ${\EuScript O}$ and ${\EuScript O}_{{\EuScript C}_{\EuScript O}}$)]}  As it was the case in the proof of Theorem \ref{2}, the isomorphism at the level of the underlying species exists by Lemma \ref{fff}.  The first isomorphism of cyclic operads follows from the equalities\footnotemark \footnotetext{These would be {\em isomorphisms} rather than {\em equalities} if we considered the sequence $S\rightarrow\partial S\rightarrow \int\partial S$ instead of $S\rightarrow\partial S\rightarrow S$.} $\partial{\eta_2}^{_S}=\partial{\eta_2}^{_S}\circ\epsilon^{-1}_2\circ\epsilon_2$ and $\partial{\rho}^{_S}=\rho'_{\nu_{\rho}^{_{\partial S}}}$. As for the second equality,
since $\rho'_{\nu_{\rho}^{_{\partial S}}}\circ \varphi^{-1}=[{\rho'_1}_{\nu_{\rho}^{_{\partial S}}},{\rho'_2}_{\nu_{\rho}^{_{\partial S}}}]$, it follows from equalities $\partial\rho^{_S}\circ\varphi^{-1}\circ i_l={\rho'_1}_{\nu_{\rho}^{_{\partial S}}}$   and  $\partial\rho^{_S}\circ\varphi^{-1}\circ i_r={\rho'_2}_{\nu_{\rho}^{_{\partial S}}}.$ We have \vspace{-0.2cm} {\small $$
{\rho'_1}_{\nu_{\rho}^{_{\partial S}}}=\nu_{\rho}^{_{\partial S}}\circ({\tt ex}\cdot{\it id}_{\partial S})=\partial\rho^{_S}\circ\varphi^{-1}\circ i_l\circ ({{\tt ex}}\cdot{\it id}_{\partial S})\circ({\tt ex}\cdot{\it id}_{\partial S})=\partial\rho^{_S}\circ\varphi^{-1}\circ i_l
\vspace{-0.3cm} $$}

\noindent and  {\small \vspace{-0.2cm}$$\begin{array}{rcl}
{\rho'_2}_{\nu_{\rho}^{_{\partial S}}}&=&\nu_{\rho}^{_{\partial S}}\circ({\tt ex}\cdot{\it id}_{\partial S})\circ{\tt c}\enspace=\enspace\partial\rho^{_S}\circ\varphi^{-1}\circ i_l\circ ({{\tt ex}}\cdot{\it id}_{\partial S})\circ({\tt ex}\cdot{\it id}_{\partial S})\circ{\tt  c}\\[0.1cm]
&=&\partial\rho^{_S}\circ\varphi^{-1}\circ i_l\circ{\tt  c}\enspace=\enspace\partial\rho^{_S}\circ\partial{\tt c}\circ{\varphi^{-1}}\circ  i_r\enspace=\enspace\partial\rho^{_S}\circ{\varphi^{-1}}\circ  i_r\,.
\end{array}\vspace{-0.2cm}$$}

\indent The second isomorphism follows from the equalities\footnote{Like before, these would be {\em isomorphisms}  if we considered the sequence $S\rightarrow\int S\rightarrow \partial\int S$.}  $\partial \eta_{1}^{_{\partial  S}}=\partial(\eta_1^{_{\partial S}}\circ\epsilon_2)\circ\epsilon_2^{-1}$ and {\small $$
\begin{array}{rcl}
\nu_{\rho_{\nu}}&=&\partial\rho_{\nu}\circ\varphi^{-1}\circ i_l\circ({\tt ex}\cdot{\it id})\\[0.1cm]
&=&[\nu\circ({\tt ex}\cdot{\it id}_{\partial S}),\nu\circ({\tt ex}\cdot{\it id}_{\partial S})\circ{\tt c}]\circ\varphi\circ\varphi^{-1}\circ i_l\circ({\tt ex}\cdot{\it id})\\[0.1cm]
&=&\nu\circ({\tt ex}\cdot{\it id}_{\partial S})\circ({\tt ex}\cdot{\it id})\enspace=\enspace\nu .
\end{array}\vspace{-0.3cm}$$}
\end{proof}
\section*{Conclusion}
Given a category ${\bf C}$ equipped with a bifunctor $\diamond:{\bf C}\times{\bf C}\rightarrow {\bf C}$ that does not bear a monoidal structure, a question of finding the ``minimal" associativity-like and unit-like isomorphisms can be asked, which  leads to categorifications of monoid-like algebraic structures in a way analogous to the one illustrated in Table 1 in Introduction. If such isomorphisms (and unit-like objects) are established, we could say that ${\bf C}$ (together with the additional structure) is a monoidal-{\em like} category and define in a natural way a monoid-like object in ${\bf C}$ - this is what the microcosm principle is about. In this paper we exhibited one such monoidal-like category: $({\bf Spec},\blacktriangle,E_2)$, which allowed us to deliver the algebraic definition of entries-only cyclic operads (Definition \ref{copeo}): 
\begin{center}
{\em A cyclic operad is a monoid-like object in the monoidal-like category $({\bf Spec},\blacktriangle,E_2)$,}
\end{center}
represented more explicitly in Table 4.
\begin{center}  \begin{savenotes}{\small 
  \begin{tabular}{rcc}  
    \toprule
       &  \textsc{Monoidal-like category}  ${\bf Spec}$  & \textsc{Monoid-like object} $S\in {\bf Spec}$ \\
    \midrule
    {\small{\textsc{product\enspace}  }}     &{\small{$\blacktriangle:{\bf Spec}\times{\bf Spec}\rightarrow {\bf Spec}$}}    & {\small{$\rho: S\blacktriangle S\rightarrow S$}}      \\[0.1cm]
    {\small{\textsc{unit\,\,\,\,\,}}}  &{\small{$E_2\in {\bf Spec}$}}    & {\small{$\eta_2:E_2\rightarrow S$}}      \\[0.1cm]
{\small \begin{tikzpicture}[scale=1.5]
\node (A)  at (0,0.38) {\textsc{associativity}\footnotemark \footnotetext{Actually, the ``minimal" associativity-like isomorphism.}};
\node (B)  at (0,0) {};
\end{tikzpicture}  }  
 &{\scriptsize
\begin{tikzpicture}[scale=1.5]
\node (A)  at (0,1) {$\gamma_{S,T,U}:(S\blacktriangle T)\blacktriangle U+T\blacktriangle(S\blacktriangle U)+(T\blacktriangle U)\blacktriangle S$};
\node (B)  at (0.275,0.7) {$\rightarrow S\blacktriangle (T\blacktriangle U)+(S\blacktriangle U)\blacktriangle T+U\blacktriangle (S\blacktriangle T)$};
\end{tikzpicture}  }  
     & {\small \begin{tikzpicture}[scale=1.5]
\node (A)  at (0,0.42) {\texttt{(CA1)}};
\node (B)  at (0,0) {};
\end{tikzpicture}  }      \\[0.065cm]
 {\small{\textsc{left unit}\footnote{Analogously, the ``minimal" unit-like isomorphism.} }}  
 &{\small{$\lambda^{\blacktriangle}_S:E_2\blacktriangle S\rightarrow S^{\bullet}$ }}  & {\small{\texttt{(CA2)} }}      \\[0.065cm]
\midrule
 {\small{\textsc{right unit}\footnotemark[9] }}
 &{\small{$\kappa^{\blacktriangle}_S:S\blacktriangle E_2\rightarrow S^{\bullet}$ }}  & {\small{Corolary \ref{run}}}   \\
    \bottomrule
  \end{tabular}

\vspace{0.25cm}
Table 4. A cyclic operad defined internally to the monoidal-like category of species. }
\end{savenotes}
\end{center}

\indent We also introduced the algebraic definition of exchangeable-output cyclic operads (Definition \ref{dddd}),
 by first upgrading the structure $({\bf Spec},\star)$, exhibited by Fiore, into the monoidal-like category $({\bf Spec},\star,E_1)$, and then by endowing the monoid-like objects of this category, i.e. operads (see Table 5), with a natural transformation that accounts for the ``input-output interchange".
\begin{center} \begin{savenotes} {\footnotesize
  \begin{tabular}{rccc}  
    \toprule
       &  \textsc{Monoidal-like category}  ${\bf Spec}$  & &\textsc{Monoid-like object} $S\in {\bf Spec}$ \\
    \midrule
    {\small{\textsc{product\enspace}  }}     &{\small{$\star:{\bf Spec}\times{\bf Spec}\rightarrow {\bf Spec}$}}    && {\small{$\nu: S\star S\rightarrow S$}}      \\[0.1cm]
    {\small{\textsc{unit\,\,\,\,\,}}}  &{\small{$E_1\in {\bf Spec}$}}    && {\small{$\eta_1:E_1\rightarrow S$}}      \\[0.1cm]
{\small \begin{tikzpicture}[scale=1.5]
\node (A)  at (0,0.38) {\textsc{associativity}\footnote{Actually, the ``minimal" associativity-like isomorphism.}  };
\node (B)  at (0,0) {};
\end{tikzpicture}  }  
 &{\footnotesize
\begin{tikzpicture}[scale=1.5]
\node (A)  at (0,1) {$\beta_{S,T,U}:(S\star T)\star U+S\star(U\star T)$};
\node (B)  at (0.275,0.7) {$\rightarrow S\star (T\star U)+(S\star U)\star T$};
\end{tikzpicture}  }  
     && {\small \begin{tikzpicture}[scale=1.5]
\node (A)  at (0,0.42) {\texttt{(OA1)}};
\node (B)  at (0,0) {};
\end{tikzpicture}  }      \\[-0.25cm]
{\small\begin{tikzpicture}
\node (A)  at (0,0.5) {};
\node (F)  at (1.14,0.53) {  {\small{\textsc{left unit} }}  };
\node (C) at (1.05,-0.05) {{\small{\textsc{right unit} }}  };
\node (D) at (0,0) {};
\node (E) at (1.2,-0.5) {};
\end{tikzpicture}  }
&{\small\begin{tikzpicture}[scale=1.5]
\node (A)  at (0,0.5) {};
\node (F)  at (1.05,0.2) { {\small{$\lambda^{\star}_S:E_1\star S\rightarrow S$}}};
\node (C) at (1.05,-0.2) {{\small{$\rho^{\star}_S:S\star E_1\rightarrow S$}}  };
\node (D) at (0,0) {};
\node (E) at (1.2,-0.5) {};
\end{tikzpicture}  } && {\small\begin{tikzpicture}[scale=1.5]
\node (A)  at (0.1,0.5) {\texttt{(OA2)}};
\node (B) at (0.5,0) {};
\node (C) at (0.5,0.5) {};
\end{tikzpicture}}    
  \\[-0.5cm]
    \bottomrule
  \end{tabular} 
\vspace{0.25cm}

Table 5. An operad defined internally to the monoidal-like category of species}
\end{savenotes}\end{center}

\indent The main correspondence that we established is the equivalence between these two algebraic definitions, which consolidates the equivalence between the two points of view on cyclic operads. \\[0.1cm]
\indent Future work will involve establishing the notion of {\em weak Cat-cyclic-operad}, i.e. a cyclic operad enriched over the category $\bf Cat$ of small categories, by replacing in Definition \ref{entriesonly} the category $\bf Set$ with $\bf Cat$ and the equations given by the axioms with isomorphisms in $\bf Cat$. We presume that the task of formulating the coherence conditions for these isomorphisms  represents a further categorification of the notion of monoid-like object in $({\bf Spec},\blacktriangle,E_2)$. This work is motivated by the article \cite{wco} of Do\v sen and Petri\' c, where they introduced the notion of {\em weak Cat-operad}.
\refs
\bibitem[{BD97}]{micro} J. C. Baez, J. Dolan, Higher-Dimensional Algebra III: n-Categories and the Algebra of Opetopes, {\em Adv. Math.}, {\bf 135} 145-206, 1998. 
\bibitem[BLL08]{species} F. Bergeron, G. Labelle, P. Leroux, Introduction to the Theory of Species of Structures, Universit\' e du Qu\' ebec \`a Montr\' eal, Montreal, 2008. 
\bibitem[CO16]{cyc1} P.-L. Curien, J. Obradovic, A formal language for cyclic operads, \href{http://arxiv.org/abs/1602.07502}{arXiv:1602.07502}, 2016. 
\bibitem[DP15]{wco} K. Do\v sen, Z. Petri\' c, Weak Cat-operads, {\em Log. Methods in Comp. Science}, {\bf 11}(1:10)  1-23, 2015. 
\bibitem[F14]{fiore} M. Fiore, Lie Structure and Composition, CT2014, University of Cambridge, \url{http://www.cl.cam.ac.uk/~mpf23/talks/CT2014.pdf}, 2014. 
\bibitem[GK95]{Getzler:1994pn}
E. Getzler, M. Kapranov, Cyclic operads and cyclic homology, {\em Geom., Top., and Phys. for Raoul Bott (S.-T. Yau, ed.), Conf. Proc. Lect. Notes. Geom. Topol., International Press}, {\bf 4} 167-201,   1995. 
\bibitem[J81]{joyal} A. Joyal, Une th\' eorie combinatoire des s\' eries formelles, {\em Advances in Mathematics}, {\bf 42} 1-82, 1981. 
\bibitem[K05]{kelly} G. M. Kelly, On the operads of J. P. May, {\em Repr. Theory Appl. Categ.}, {\bf 13} 1-13, 2005. 
\bibitem[Lam15]{l15} F. Lamarche, private communication, 2015.
\bibitem[Mar96]{mmm} M. Markl, Models for operads, {\em Comm. Algebra}, {\bf 24}(4) 1471-1500, 1996. 
\bibitem[Mar08]{opsprops} M. Markl, Operads and PROPs, {\em Elsevier, Handbook for Algebra}
Vol. 5 (2008), 87-140. 
\bibitem[Mar15]{modular} M. Markl, Modular envelopes, OSFT and nonsymmetric (non-$\Sigma$) modular operads, {\em J. Noncommut. Geom.} 10, 775-809, 2016.
\bibitem[May72]{GILS} J. P. May, The geometry of iterated loop spaces, {\em Lectures Notes in Mathematics},
{\bf 271}, Springer-Verlag, Berlin, 1972.
\endrefs

\end{document}